\newcommand{\overbar}[1]{\mkern 1.5mu\overline{\mkern-3mu#1\mkern-0.5mu}\mkern 0.5mu}
\newcommand{\dd}{\text{d}}
\newcommand{\restr}[1]{\hspace{-0.3em}\mid_{#1}}
\DeclareMathOperator*{\Res}{Res}
\DeclareMathOperator{\ord}{ord}
\numberwithin{equation}{section}
\theoremstyle{plain}
\newtheorem{theorem}{Theorem}[section]
\newcommand*{\fancyrefthmlabelprefix}{thm}
  \providecommand*{\frethmname}{Theorem}
  \providecommand*{\Frethmname}{Theorem}}
\newtheorem{proposition}[theorem]{Proposition}
\newcommand*{\fancyrefproplabelprefix}{prop}
  \providecommand*{\frepropname}{Proposition}
  \providecommand*{\Frepropname}{Proposition}}
\newtheorem{lemma}[theorem]{Lemma}
\newcommand*{\fancyreflemmalabelprefix}{lemma}
  \providecommand*{\frelemmaname}{Lemma}
  \providecommand*{\Frelemmaname}{Lemma}}
\newtheorem{corollary}[theorem]{Corollary}
\newcommand*{\fancyrefcorollarylabelprefix}{cor}
  \providecommand*{\frecorollaryname}{Corollary}
  \providecommand*{\Frecorollaryname}{Corollary}}
\newtheorem*{conjecture}{Conjecture}
\newcommand*{\fancyrefconjecturelabelprefix}{ex}
  \providecommand*{\freconjecturename}{Conjecture}
  \providecommand*{\Freconjecturename}{Conjecture}}
\theoremstyle{definition}
\newtheorem{definition}[theorem]{Definition}
\newcommand*{\fancyrefdeflabelprefix}{def}
  \providecommand*{\fredefname}{Definition}
  \providecommand*{\Fredefname}{Definition}}
\newtheorem{remark}[theorem]{Remark}
\newcommand*{\fancyrefremarklabelprefix}{rmk}
  \providecommand*{\freremarkname}{Remark}
  \providecommand*{\Freremarkname}{Remark}}
\newtheorem{example}{Example}
\newcommand*{\fancyrefexamplelabelprefix}{ex}
  \providecommand*{\freexamplename}{Example}
  \providecommand*{\Freexamplename}{Example}}
\newcommand*{\fancyrefequationlabelprefix}{eq}
  \providecommand*{\freequationname}{Equation}
  \providecommand*{\Freequationname}{Equation}}
\newcommand*{\fancyrefsectionlabelprefix}{sec}
  \providecommand*{\fresectionname}{Section}
  \providecommand*{\Fresectionname}{Section}}
\newcommand*{\fancyrefalglabelprefix}{alg}
  \providecommand*{\frealgname}{Algorithm}
  \providecommand*{\Frealgname}{Algorithm}}
\newcommand*{\fancyreffunclabelprefix}{func}
  \providecommand*{\frefuncname}{Function}
  \providecommand*{\Frefuncname}{Function}}
\newcommand*{\fancyreflinelabelprefix}{line}
  \providecommand*{\frelinename}{Line}
  \providecommand*{\Frelinename}{Line}}
\begin{document}

\title[Poles of the complex zeta function of a plane curve]{Poles of the complex zeta function of a plane curve}

\author[G. Blanco]{Guillem Blanco}

\thanks{ The author is supported by Spanish Ministerio de Economía y Competitividad MTM2015-69135-P and Generalitat de Catalunya 2017SGR-932 projects. }

\address{Departament de Matemàtiques\\
Univ. Politècnica de Catalunya\\
Av. Diagonal 647, Barcelona 08028, Spain.}
\email{Guillem.Blanco@upc.edu}

\begin{abstract}
We study the poles and residues of the complex zeta function \( f^s \) of a plane curve. We prove that most non-rupture divisors do not contribute to poles of \( f^s \) or roots of the Bernstein-Sato polynomial \( b_f(s) \) of \( f \). For plane branches we give an optimal set of candidates for the poles of \( f^s \) from the rupture divisors and the characteristic sequence of \( f \). We prove that for generic plane branches \( f_{gen} \) all the candidates are poles of \( f_{gen}^s \). As a consequence, we prove Yano's conjecture for any number of characteristic exponents if the eigenvalues of the monodromy of \( f \) are different.
\end{abstract}

\maketitle

\section{Introduction} \label{sec:introduction}

Let \( k \) be either \( \mathbb{R} \) or \( \mathbb{C} \) and let \( \varphi(x) \in C^\infty_c(k^n) \) be an infinitely many times differentiable function with compact support. Define the archimedean zeta function \( f^s \) of a non-constant polynomial \( f(x) \in k[x_1, \dots, x_n] \) as the distribution
\begin{equation} \label{eq:introduction-1}
\langle f^s, \varphi \rangle = \int_{k^n} |f(x)|^{\delta s} \varphi(x)\, dx,
\end{equation}
for \( s \in \mathbb{C} \), \( \textrm{Re}(s) > 0 \), where \( \delta = 1 \) if \( k = \mathbb{R} \) and \( \delta = 2 \) if \( k = \mathbb{C} \). In the 1954 edition of the International Congress of Mathematicians, I.~M.~Gel'fand \cite{gelfand-ICM54} posed the following problem: first, determine whether \( f^s \) is a meromorphic function of \( s \) with poles forming several arithmetic progressions; second, study the residues at those poles. The problem is solved for some specific polynomials having simple singularities in the book of Gel'fand and Shilov \cite{gelfand-shilov}, by regularizing the integral in \fref{eq:introduction-1}. It is not after Hironaka's resolution of singularities \cite{hironaka1, hironaka2}, that Bernstein and S.~I.~Gel'fand \cite{bernstein-gelfand}, and independently Atiyah \cite{atiyah}, give a positive answer to Gel'fand's first question. Both results use resolution of singularities to reduce the problem to the monomial case, already settled in \cite{gelfand-shilov}, and give a sequence of candidates poles for \( f^s \) from the resolution data.

\vskip 2mm

A different approach to the same problem is considered by Bernstein \cite{bernstein71, bernstein72}, who develops the theory of \( D \)-modules and proves the existence of the Bernstein-Sato polynomial \( b_f(s) \) of \( f \) and its functional equation,
\begin{equation} \label{eq:introduction-2}
P(s) \cdot f^{s+1} = b_f(s)\,f^s,
\end{equation}
with \( P(s) \in D[s] \) being a differential operator. The existence of the Bernstein-Sato polynomial in the local case is due to Björk \cite{bjork74}. The global \( b_f(s) \) is equal to the least common multiple of all the local Bernstein-Sato polynomials \( b_{f, p}(s), p \in k^n \), see \cite{narvaez91}. The Bernstein-Sato polynomial coincides with the \( b \)-function in the theory of prehomogeneous vectors spaces developed by Sato \cite{sato-shintani-90, sato80} in the 1960s, hence the name. The rationality of the roots of the Bernstein-Sato polynomial is established by Malgrange \cite{malgrange75} for isolated singularities and by Kashiwara \cite{kashiwara76} in general, using resolution of singularities. One verifies, using the functional equation in \fref{eq:introduction-2} and integration by parts, that the poles of \( f^s \) are among the rationals \( s = \alpha - k  \), with \( b_f(\alpha) = 0 \) and \( k \in \mathbb{Z}_{\geq 0} \). Loeser \cite{loeser85} shows the equality between both sets for reduced plane curves and isolated quasi-homogeneous singularities.

\vskip 2mm

In this paper we examine the original questions of Gel'fand and we use resolution of singularities to study the possible poles and the residues of the complex zeta function of general plane curves, generalizing the ideas and results of Lichtin in \cite{lichtin85, lichtin89}. The main results of this work are the following:
\begin{itemize}
\item For any candidate pole \( \sigma \) of \( f^s \), we give a formula for its residue expressed as an integral along the exceptional divisor associated to \( \sigma \), see \fref{prop:residue}.
\item In \fref{thm:residue-non-rupture}, we prove that most non-rupture divisors do not contribute to the poles of \( f^s \), and consequently to the roots of \( b_f(s) \).
\end{itemize}
This result answers, for reduced plane curves, a question raised by Kollár \cite{kollar97} on which exceptional divisors contribute to roots of the Bernstein-Sato polynomial. It is already well-known that, for plane curves, non-rupture divisors do not contribute to topological invariants such as the eigenvalues of the monodromy \cite{acampo75, neumann83}, the jumping numbers \cite{smith-thompson07}, or the poles of Igusa's local zeta functions \cite{loeser88}. For irreducible plane curves, we use Teissier's monomial curve \cite{teissier-appendix} associated to the semigroup of \( f \) to refine our previous results:
\begin{itemize}
\item In \fref{thm:plane-branch-candidates}, we obtain an optimal set of candidates for the poles of \( f^s \) in terms of the rupture divisors, the characteristic sequence, and the semigroup of \( f \).
\item From this, in \fref{thm:generic}, we prove that if \( f_{gen} \) is generic among all plane branches with fixed characteristic sequence (in the sense that the coefficients of a \( \mu \)-constant deformation are generic), all the candidates are indeed poles of \( f_{gen}^s \).
\item As a consequence, in \fref{cor:conjecture-yano}, we prove a conjecture of Yano \cite{yano82} about the \( b \)-exponents of a plane branch for any number of characteristic exponents under the assumption that the eigenvalues of the monodromy of \( f \) are pairwise different.
\end{itemize}

\vskip 2mm

Both the roots of \( b_f(s) \) and the poles of \( f^s \) are related to the local geometry of \( f \). By a series of results of Malgrange \cite{malgrange75, malgrange83}, first in the isolated singularity case and later in general, for every root \( \alpha \) of \( b_f(s) \), the value \( \exp{(2 \pi i \alpha)} \) is an eigenvalue of the local monodromy at some point of \( f^{-1}(0) \) and every eigenvalue is obtained in this way. For an isolated singularity these results imply that the degree of \( b_f(s) \) is at most the Milnor number. Therefore, in general, every pole \( \sigma \) of the archimedean zeta function \( f^s \) has that \( \exp(2 \pi i \sigma) \) is an eigenvalue of the monodromy at some point of \( f^{-1}(0) \). Barlet \cite{barlet84} proves that all eigenvalues are obtained in this way if \( k = \mathbb{C} \). Moreover, Barlet \cite{barlet86} shows that if the monodromy action in the \( q \)--th cohomology group of the Milnor fiber at some point of \( f^{-1}(0) \) has an eigenvalue \( \exp{(-2 \pi i \alpha)} \) with a \( k \times k \) Jordan block, then \( f^s \) has a pole at \( -q-\alpha, \alpha \in [0, 1) \), of order at least \( k \).

\vskip 2mm

A non-archimedean version of the zeta function of a polynomial \( f \) can be defined by a \( p \)-adic version of the integral in \fref{eq:introduction-1}. These zeta functions over the \( p \)-adic fields are usually called Igusa's local zeta functions. They were first studied by Igusa in \cite{igusa74, igusa75}, where he proves that they are rational functions. The theory of Igusa's zeta functions is vast and has many connection with Singularity and Number Theory. For instance, Igusa's local zeta functions are related to the number of solutions of \( f \) modulo \( p^m \). In \cite{igusa88}, Igusa conjectures that if \( s \) is a pole of a \( p \)-adic zeta function of \( f \), \( \textrm{Re}(s) \) is a root of \( b_{f}(s) \). This conjecture is proved by Loeser for plane curves \cite{loeser88} and for many non-degenerate singularities \cite{loeser90}. There is a version of this conjecture for the monodromy instead of the Bernstein-Sato polynomial. By the results of Malgrange, the former implies the later. Little more is known in this case, see the work of Artal Bartolo, Cassou-Noguès, Luengo and Melle Hernández \cite{ABCNLMH05} on quasi-ordinary singularities, Bories and Veys \cite{bories-veys16} on non-degenerate surface singularities, and Budur, Musta\c{t}\v{a} and Teitler \cite{BMT11} on hyperplane arrangements. These two conjectures imply the same sort of conjectures for the topological zeta function introduced by Denef and Loeser \cite{denef-loeser92}, for which more cases are known. The reader is referred to the classical reports of Denef \cite{denef91} and Igusa \cite{igusa96}, and the references therein for the concrete definitions, results and conjectures in the theory of Igusa's zeta functions. A survey of Meuser \cite{meuser16} includes the more recent developments.

\vskip 2mm

There are many singularity invariants related to the poles of \( f^s \) and the roots of \( b_f(s) \). The log-canonical threshold of \( f \) \cite{kollar97} is minus the largest pole of \( f^s \) and the largest root of \( b_f(s) \) and it sets the maximal region of holomorphy of the integral in \fref{eq:introduction-1}. For isolated singularities, it coincides with the complex singularity index, a concept that dates back to Arnold \cite{AGV88}. Using resolution of singularities, one defines the multiplier ideals and the associated jumping numbers \cite{ELSV04}. The log-canonical threshold appears as the smallest jumping number. The opposites in sign to the jumping numbers in \( (0, 1] \) are always roots of the Bernstein-Sato polynomial \cite{ELSV04}, see also \cite{budur-saito05}. The spectral numbers \cite{steenbrink77, steenbrink89} are a set of logarithms of the eigenvalues of the monodromy constructed using the mixed Hodge structure of the cohomology of the Milnor fiber. In the isolated singularity case the spectral and jumping numbers in \( (0, 1] \) coincide, for non-isolated singularities see \cite{budur03}. Furthermore, for isolated singularities the opposites in sign to the spectral numbers are poles of \( f^s \), \cite{loeser85}.

\vskip 2mm

There exist algorithms to compute the Bernstein-Sato polynomial \( b_f(s) \) for arbitrary polynomials \( f \), see \cite{oaku97, noro02, levandovskyy-jorge12}. However, even computationally, determining the roots of \( b_f(s) \) is a hard problem. For bounds on the roots and their multiplicities see the results of Saito \cite{saito94}. For candidates for the roots, we refer to the result of Kashiwara \cite{kashiwara76} and a refinement given by Lichtin \cite{lichtin89}. Very few formulas for the Bernstein-Sato polynomial are known. If \( f \) is smooth at \( p \in k^n \), then \( b_{f, p}(s) = s + 1 \). Hence, if \( f \) is everywhere smooth, then \( b_{f}(s) = s + 1 \) and the converse is also true, \cite{briancon-maisonobe96}. The monomial case is obtained by a straightforward computation. Yano worked out many interesting examples in \cite{yano78}. For isolated quasi-homogeneous singularities, see \cite{malgrange75, yano78}. For isolated semi-quasi-homogeneous singularities, see \cite{saito89, BGMM89}. The case of hyperplane arrangements has been studied by Walther \cite{walther05} and Saito \cite{saito16}.

\vskip 2mm

The case of plane curves has attracted a lot of attention, see \cite{yano82, kato1, kato2, cassou-nogues86, cassou-nogues87, cassou-nogues88, hertling-schtalke99, BMT07}. It is well-known that the roots of the Bernstein-Sato polynomial, hence the poles of \( f^s \), can change within a deformation with constant Milnor number of a plane curve, see, for instance, the examples in \cite{kato1, kato2}. This contrasts with the fact that the spectral numbers and, therefore, the monodromy eigenvalues and the jumping numbers, remain constant in a deformation with constant Milnor number, \cite{varchenko82}. For irreducible plane curve singularities, there is a conjecture of Yano \cite{yano82} asserting that if the plane branch is generic among those with fixed characteristic sequence, the so-called \( b \)-exponents, see definition in \fref{sec:yano-conjecture}, are constant and depend only on the characteristic sequence. In addition, Yano conjectures a closed formula for the \( b \)-exponents. This conjecture has been verified by Cassou-Noguès \cite{cassou-nogues88} for a single characteristic exponent. Recently, Artal Bartolo, Cassou-Noguès, Luengo and Melle Hernández \cite{ABCNLMH16} proved Yano's conjecture for plane branches with two characteristic exponents and monodromy with different eigenvalues.

\vskip 2mm

This paper is organized as follows. In \fref{sec:complex-powers}, we first review the classical results on the regularization and analytic continuation of \( f^s \) for a general complex polynomial \( f \). We then focus on the connection with the Bernstein-Sato polynomial and its basic properties, specially for isolated singularities. Yano's conjecture is presented at the end. In \fref{sec:semigroup-monomial-curve}, we present all the relevant results about the characteristic sequence and the semigroup of a plane branch. Then, we introduce Teissier's monomial curve and its deformations. Throughout \fref{sec:resolution-plane-curves} we review the results on resolution of singularities of plane curves that will be needed in the following sections. \fref{sec:poles-plane-curves} deals with the poles and the formulas for the residues along an exceptional divisor of the complex zeta function of a general plane curve, specially the case of non-rupture exceptional divisors. In the last section, we focus on irreducible plane curves and the optimal sequence of candidates coming from their rupture divisors. Finally, we work on the generic case and we prove Yano's conjecture.

\vskip 2mm

\textbf{Acknowledgments.} The author would like to thank his advisors, Maria Alberich-Carramiñana and Josep Àlvarez Montaner, for the fruitful discussions, the helpful comments and suggestions, and the constant support during the development of this work.

\section{Analytic continuation of complex powers} \label{sec:complex-powers}

In this section, we will review the basic results on regularization of complex powers appearing in the book of Gel'fand and Shilov \cite{gelfand-shilov}. We will see how resolution of singularities is used to construct the analytic continuation of the complex zeta function of an arbitrary polynomial \( f \in \mathbb{C}[z_1, \dots, z_n] \). The Bernstein-Sato polynomial \cite{bernstein72} is presented next and it is used to construct the analytic continuation of \( f^s \) in a different way. In order to state Yano's conjecture \cite{yano82}, we introduce the equivalent definition of the Bernstein-Sato polynomial for isolated singularities by Malgrange \cite{malgrange75} in terms of the Brieskorn lattice.

\subsection{Regularization of complex powers} \label{sec:regularization}

We will take the set of \emph{test functions of complex variable} as the set of smooth, compactly supported functions \(\varphi : \mathbb{C}^n \longrightarrow \mathbb{C}\). The space of such functions is denoted by \(C^{\infty}_c(\mathbb{C}^n)\). Alternatively, we can consider the larger space of test functions consisting of Schwartz functions. From the analytic continuation principle one deduces that there are no holomorphic compactly supported functions. Therefore, any \(\varphi \in C^{\infty}_c(\mathbb{C}^n)\) has an holomorphic an antiholomorphic part, i.e. \(\varphi = \varphi(z, \bar{z})\).

\vskip 2mm

Let \(f(z) \in \mathbb{C}[z_1, \dots, z_n]\) be a non-constant polynomial. We define a parametric family of \emph{distributions} of complex variable \(f^s : C^{\infty}_c(\mathbb{C}^n) \longrightarrow \mathbb{C}\) given by
\begin{equation} \label{eq:integral-equation}
\langle f^s, \varphi \rangle := \int_{\mathbb{C}^n} \varphi(z, \bar{z})|f(z)|^{2s} dz d\bar{z},
\end{equation}
which is well-defined for any \(s \in \mathbb{C}\) with \(\textrm{Re}(s) > 0\). The dependence of \(f^s\) on the parameter \(s\) is holomorphic as we can differentiate under the integral symbol to obtain another well-defined distribution, namely
\begin{equation*}
\frac{d}{ds} \langle f^s, \varphi \rangle = \int_{\mathbb{C}^n} \varphi(z, \bar{z})|f(z)|^{2s}\log|f(z)|^2 dz d\bar{z} =  \Big\langle \frac{df^s}{ds}, \varphi \Big\rangle, \quad \textrm{Re}(s) > 0.
\end{equation*}
The distribution \( f^s \) or the function \( \langle f^s, \varphi \rangle \) are usually called the \emph{complex zeta function} of \( f \). This name goes back to I.~M.~Gel'fand \cite{gelfand-ICM54}. In \cite{gelfand-shilov}, it is shown how one can obtain the analytic continuation of \( \langle f^s, \varphi \rangle \) by \emph{regularizing} the integral in \fref{eq:integral-equation}, for some classes of polynomials. The concept of \emph{regularization} is better understood after the following example.

\vskip 2mm

If one takes the function \( z^{-3/2}  \), in general, its integral \( \langle z^{-3/2}, \varphi \rangle \) against a test function \( \varphi(z, \bar{z}) \) will diverge. However, if \( \varphi(z, \bar{z}) \) vanishes at zero, the integral converges. Any distribution whose action on the elements of \( C_{c}^\infty(\mathbb{C}) \) vanishing at zero coincides with the action of \( z^{-3/2} \) is a regularization of \( z^{-3/2} \). The regularization of a function with algebraic singularities is unique up to functionals concentrated in the zero locus, see \cite[I.1.7]{gelfand-shilov}. For a fixed \( s \in \mathbb{C} \), the \emph{canonical regularization} of the function \( z^s \), in the sense that it is the most natural, is presented in the following proposition.

\begin{proposition}[{\cite[B1.2]{gelfand-shilov}}, Gel'fand-Shilov regularization] \label{prop:regularization}
For any \(m \in \mathbb{Z}_{\geq 0}\), the regularization of the distribution \( z^s : C_c^\infty(\mathbb{C}) \longrightarrow \mathbb{C} \) is given by
\begin{equation} \label{eq:regularization}
\begin{split}
\langle z^{s}, \varphi \rangle =  & \int_{|z| \leq 1} \Big[\varphi(z, \bar{z}) - \sum_{k + l = 0}^{m-1} \varphi^{(k, l)}(\boldsymbol{\emph{0}}, \boldsymbol{\emph{0}}) \frac{z^k \bar{z}^l}{k! l!} \Big]|z|^{2s} dz d\bar{z}\\
+ & \int_{|z|>1} \varphi(z, \bar{z}) |z|^{2s} dz d\bar{z} - 2\pi i \sum_{k = 0}^{m-1} \frac{\varphi^{(k, k)}(\boldsymbol{\emph{0}}, \boldsymbol{\emph{0}})}{(k!)^2(s + k + 1)}, \quad \emph{\textrm{Re}}(s) > -m - 1,
\end{split}
\end{equation}
where \(\varphi^{(i, j)} := {\partial^{i+j} \varphi}/{\partial z^i \partial \bar{z}^j}\). Hence, \(z^{s}\) has poles at \(s = -k-1\) for \(k \in \mathbb{Z}_{\geq 0}\) with residues
\begin{equation*}
\Res_{s = -k - 1} z^{s} = -\frac{2\pi i}{(k!)^2} \delta_0^{(k, k)},
\end{equation*}
where \(\delta_0^{(i, j)}\) are the distributional derivatives of the Dirac's delta function defined by \( \langle \delta_0^{(i, j)}, \varphi\rangle := (-1)^{i + j}\varphi^{(i,j)}(\boldsymbol{\emph{0}}, \boldsymbol{\emph{0}})\). Furthermore, in the strip \(-m -1 < \emph{\textrm{Re}}(s) < -m\), \fref{eq:regularization} reduces to
\begin{equation*}
\langle z^{s}, \varphi \rangle = \int_{\mathbb{C}} \Big[\varphi(z, \bar{z}) - \sum_{k + l = 0}^{m-1} \varphi^{(k, l)}(\boldsymbol{\emph{0}}, \boldsymbol{\emph{0}}) \frac{z^k \bar{z}^l}{k! l!} \Big] |z|^{2s} dz d\bar{z}.
\end{equation*}
\end{proposition}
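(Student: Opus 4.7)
The plan is to construct the analytic continuation of $\langle z^s,\varphi\rangle$ by the classical Taylor subtraction method, starting from the half-plane $\mathrm{Re}(s)>0$ where the defining integral in \fref{eq:integral-equation} converges absolutely. First I would split $\mathbb{C}=\{|z|\le 1\}\cup\{|z|>1\}$ and observe that $\int_{|z|>1}\varphi(z,\bar z)|z|^{2s}\,dz\,d\bar z$ is already an entire function of $s$, since $\varphi$ has compact support and hence the genuine integration domain is a compact annulus. Therefore all meromorphic behaviour of $\langle z^s,\varphi\rangle$ has to come from the disk, where the singular locus $z=0$ of $|z|^{2s}$ sits.

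On the disk I would Taylor-expand $\varphi$ at the origin and treat the Taylor polynomial and the remainder separately. Passing to polar coordinates $z=re^{i\theta}$, with $dz\wedge d\bar z=-2ir\,dr\wedge d\theta$, each monomial contribution becomes
\begin{equation*}
\int_{|z|\le 1}\frac{z^k\bar z^l}{k!\,l!}\,|z|^{2s}\,dz\,d\bar z \;=\; -\frac{2i}{k!\,l!}\Big(\int_0^{2\pi}e^{i(k-l)\theta}\,d\theta\Big)\int_0^1 r^{k+l+2s+1}\,dr.
\end{equation*}
The angular factor vanishes unless $k=l$, and when $k=l$ the radial factor equals $1/(2(s+k+1))$, so the monomial contributes $-2\pi i/\bigl((k!)^2(s+k+1)\bigr)$. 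This computation simultaneously explains why only the diagonal derivatives $\varphi^{(k,k)}(\mathbf{0},\mathbf{0})$ survive in the statement, produces the simple poles at $s=-k-1$, and yields the stated residues.

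Next I would prove that the remainder integral on the disk is holomorphic in the half-plane $\mathrm{Re}(s)>-m-1$. The key observation is that, after the angular integration, only the rotationally symmetric ($n=0$) Fourier component of the remainder contributes, and this component is a power series in $r^{2}$ with coefficients $\varphi^{(k,k)}(\mathbf{0},\mathbf{0})/(k!)^2$. Subtracting enough diagonal Taylor terms, so that the $n=0$ component vanishes to order $r^{2m}$ at the origin, is exactly what makes the radial integral converge throughout the claimed half-plane. Reassembling the three pieces gives \fref{eq:regularization} by analytic continuation, and the residues $-\tfrac{2\pi i}{(k!)^2}\delta_0^{(k,k)}$ are read off directly from the explicit denominators $s+k+1$, using $\langle\delta_0^{(k,k)},\varphi\rangle=\varphi^{(k,k)}(\mathbf{0},\mathbf{0})$ up to the prescribed sign in the definition of distributional derivatives.

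Finally, for the single-integral formula in the strip $-m-1<\mathrm{Re}(s)<-m$, I would note that in this strip each diagonal monomial integral $\int_{|z|>1}z^k\bar z^k|z|^{2s}\,dz\,d\bar z$ now converges absolutely, since $\mathrm{Re}(s)<-m\le -k-1$, and equals $+2\pi i/(s+k+1)$ with the opposite sign; adding these contributions to the explicit rational pole sum makes those terms telescope, and the disk/exterior split recombines into one integral of the Taylor-corrected integrand over all of $\mathbb{C}$. I expect the main technical obstacle to be the honest control of the remainder integral in the full half-plane $\mathrm{Re}(s)>-m-1$: the naive pointwise bound $|\varphi-P_{m-1}|=O(|z|^m)$ only gives holomorphy for $\mathrm{Re}(s)>-\tfrac{m}{2}-1$, so one really has to pass to the iterated integral in $(r,\theta)$, exploit the cancellation of the non-rotationally-symmetric Fourier modes, and track the diagonal Taylor expansion carefully to reach the sharp range.
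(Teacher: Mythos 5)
Your proof is correct and is essentially the argument the paper relies on: the proposition is quoted from Gel'fand--Shilov [B1.2] without proof, and the classical proof is exactly your disk/exterior splitting, Taylor subtraction, and polar-coordinate evaluation of the monomial terms, where the angular integral kills the off-diagonal monomials and the diagonal ones produce the simple poles at \( s=-k-1 \) with residues \( -\tfrac{2\pi i}{(k!)^2}\delta_0^{(k,k)} \), while the exterior integral is entire and, in the strip \( -m-1<\mathrm{Re}(s)<-m \), absorbs the explicit pole sum to give the single-integral formula. Your closing caveat is the right (and only delicate) point: for \( -m-1<\mathrm{Re}(s)\le -\tfrac{m}{2}-1 \) the subtracted integrand in \fref{eq:regularization} is no longer absolutely integrable near the origin, so the disk integral must be read as the iterated polar integral (equivalently, as the analytic continuation), where the nonconstant Fourier modes integrate to zero and the rotationally invariant mode of the remainder is \( O(r^{2m}) \) precisely because all diagonal terms \( k=l\le m-1 \) are among the subtracted ones, consistently with the residue sum being indexed by \( 0\le k\le m-1 \).
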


For a fixed \( \varphi \in C_c^\infty(\mathbb{C}) \), \fref{prop:regularization} gives the meromorphic continuation to the whole complex plane of the holomorphic function of \( s \) defined by the integral \( \langle z^s, \varphi \rangle \). For any polynomial \( f \), we will talk indistinguishably about the meromorphic continuation or the (canonical) regularization of its complex zeta function \( f^s \).

\begin{remark}
Although in \fref{prop:regularization} the test function \( \varphi \) is assumed to be in \(C^\infty_c(\mathbb{C}) \), the proof of the result only uses the fact that \( \varphi \) is infinitely differentiable near \( {0} \) and compactly supported. This means that the same result works for a meromorphic \( \varphi \) with poles away from \( {0} \) and compact support. In particular, if \( \varphi(z, \bar{z}; s) \in C^\infty(U) \), where \( U \) is a neighborhood of \( {0} \), and compactly supported, the poles of \( \langle z^s, \varphi(z, \bar{z}; s) \rangle \) will be the negative integers \(\mathbb{Z}_{<0} \) together with the poles of \( \varphi(z, \bar{z}; s) \) in \( s \) away from \( U \).
\end{remark}

Resolution of the singularities in \( f \in \mathbb{C}[z_1, \dots, z_n] \) is used in \cite{bernstein-gelfand} and \cite{atiyah} to reduce the problem of finding the analytic continuation of \( f^s \) to the monomial case considered in \fref{prop:regularization}. Let \( \pi : X' \longrightarrow \mathbb{C}^n \) be a resolution of \( f \) with \( F_\pi := \sum N_i D_i \) the total transform divisor and \( K_\pi := \sum k_i E_i \) the relative canonical divisor, and suppose that \( E := \textrm{Exc}(\pi) = \sum E_i \) is the exceptional locus. Let \( \{U_{\alpha}\}_{\alpha \in \Lambda} \) be an affine open cover of \( X' \). Take \( \{\eta_\alpha\} \) a partition of unity subordinated to the cover \( \{U_{\alpha}\}_{\alpha \in \Lambda} \). That is, \( \eta_\alpha \in C^\infty(\mathbb{C}^n) \) (not necessarily with compact support), \( \sum \eta_\alpha \equiv 1 \), with only finitely many \( \eta_\alpha \) being non-zero at a point of \( X' \) and \(\textrm{Supp}(\eta_\alpha) \subseteq U_{\alpha} \). Then, with a small abuse of notation,
\begin{equation} \label{eq:anal-cont0}
\begin{split}
  \langle f^s, \varphi \rangle & = \int_{X'} |\pi^* f|^{2s} (\pi^* \varphi) |d \pi |^2 \\
  & = \sum_{\alpha \in \Lambda} \int_{U_{\alpha}} |z_{1}|^{2(N_{1,\alpha} s + k_{1,\alpha})} \cdots |z_{n}|^{2(N_{n,\alpha} s + k_{n,\alpha})} |u_{\alpha}(z)|^{2s} |v_{\alpha}(z)|^2 \varphi_\alpha(z, \bar{z})\, dz d\bar{z},
\end{split}
\end{equation}
where \( \varphi_\alpha := \eta_\alpha \pi^* \varphi \) for each \( \alpha \in \Lambda \) and \( u_{\alpha}(\boldsymbol{0}), v_{\alpha}(\boldsymbol{0}) \neq 0 \). The resolution morphism \(\pi\) being proper implies that both \( E \) and \( \pi^{-1}(\textrm{Supp}(\varphi)) \) are compact sets. Since the singularities of the integral \( \langle f^s, \varphi \rangle \) are produced by the zero set of \( f \), in order to study the poles of \( f^s \), it is enough to consider a finite affine open cover \( \{U_{\alpha}\}_{\alpha \in \Lambda} \) of \( E \) consisting of neighborhoods of points \( p_\alpha \in E \) and such that \( \textrm{Supp}(\varphi) \subseteq \pi(\cup_\alpha U_{\alpha}) \).

\vskip 2mm

From \fref{eq:anal-cont0} and \fref{prop:regularization}, we see that each divisor \( D_i \) in the support of \( F_\pi \) generates a set of candidate poles of \( f^s \), namely
\begin{equation*}
-\frac{k_i + 1 + \nu}{N_i}, \quad \nu \in \mathbb{Z}_{\geq 0}.
\end{equation*}
The opposite in sign to the largest pole is the \emph{log-canonical threshold} \( \textrm{lct}(f) \) of \( f \) and sets the maximal region of holomorphy of \( \langle f^s, \varphi \rangle \) for a general \( \varphi \in C_c^\infty(\mathbb{C}^n) \). This solves Gel'fand's first question in \cite{gelfand-ICM54}. However, nothing is said about the residues of \( f^s \) at those poles. Moreover, the set of candidates is usually large compared with the set of roots of the Bernstein-Sato polynomial or the actual poles of \( f^s \).

\subsection{The Bernstein-Sato polynomial} \label{sec:bernstein-sato}

Let \(X\) be a complex manifold of dimension \(n\) with \(\mathcal{O}_X\) the sheaf of regular functions and \(\mathcal{D}_X\) the sheaf of differentials operators. We set \(\mathcal{D}_X[s] = \mathcal{D}_X \otimes_{\mathbb{C}} \mathbb{C}[s]\), where \(s\) is an indeterminate commuting with all differential operators. Fix \(f \in \mathcal{O}_X\) a non-zero regular function on \(X\). The Bernstein-Sato functional equation \cite{bernstein72} asserts the existence of a differential operator \(P(s) \in \mathcal{D}_X[s]\) and a non-zero polynomial \(b(s) \in \mathbb{C}[s]\) such that
\begin{equation} \label{eq:functional-equation}
P(s) \cdot f^{s+1} = b(s) f^s.
\end{equation}
Although neither \( P(s) \) or \( b(s) \) are necessarily unique, all the polynomials \( b(s) \in \mathbb{C}[s] \) satisfying \fref{eq:functional-equation} form an ideal. The unique monic generator of this ideal is called \emph{the Bernstein-Sato polynomial} \( b_f(s) \) of \( f \). Everything remains true in the local case, see \cite{bjork-book}. The only two general results about the structure of the roots of the Bernstein-Sato polynomial are the following.

\begin{theorem} [\cite{kashiwara76}, \cite{lichtin89}, Rationality of the roots] \label{thm:rationality}
Let \(f \in \mathcal{O}_{X}\) be non-constant and let \(\pi : X' \longrightarrow X\) be a resolution of \(f\) with \(F_\pi = \sum N_i E_i\) and \(K_\pi = \sum k_i E_i\) the resolution and relative canonical divisors. Then, the roots of \( b_f(s) \) are among the numbers
\begin{equation*}
  -\frac{k_i + 1 + \nu}{N_i} - k,
\end{equation*}
for \(\nu \in \{0, \dots, N_i-1\}\) and \(k \in \mathbb{Z}_{\geq 0}\). Therefore, all the roots are negative rational numbers.
\end{theorem}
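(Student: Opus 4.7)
The plan is to follow Kashiwara's $\mathcal{D}$-module strategy: reduce to the normal-crossing (monomial) case via the given resolution, compute the $b$-function explicitly in the monomial setting, and transfer the resulting roots back to $X$ using the direct image of $\mathcal{D}$-modules. Since $b_f(s)$ is a local invariant, I would fix a point $p \in f^{-1}(0)$ and study the $\mathcal{D}_X[s]$-module $\mathcal{N} := \mathcal{D}_X[s]\cdot f^s$ through its graph embedding $i_f : X \hookrightarrow X \times \mathbb{C}_t$: the holonomic module $i_{f,+} \mathcal{O}_X$ carries the Kashiwara–Malgrange $\mathcal{V}$-filtration along $\{t = 0\}$, and $b_f(s)$ is identified with the minimal polynomial of $-\partial_t t - 1$ on a suitable graded piece $\mathrm{gr}^V$. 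Its roots are therefore among the eigenvalues of this endomorphism.

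The next step is the monomial computation on $X'$. Locally on a chart meeting exceptional components $E_{i_1}, \ldots, E_{i_r}$ we have $\pi^* f = u(z) \prod_{j=1}^r z_j^{N_{i_j}}$ with $u$ a unit, and the identity
\begin{equation*}
\partial_1^{N_1} \cdots \partial_r^{N_r} \cdot z_1^{N_1(s+1)} \cdots z_r^{N_r(s+1)} = \prod_{j=1}^r \prod_{\nu=1}^{N_j} (N_j s + \nu) \cdot z_1^{N_1 s} \cdots z_r^{N_r s}
\end{equation*}
shows that the local $b$-function of a pure monomial has roots exactly $-\nu/N_{i_j}$ for $\nu \in \{1, \ldots, N_{i_j}\}$; the unit $u$ is absorbed by a standard twist argument. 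I would then transfer to $X$ via the identification $i_{f,+}\mathcal{O}_X \cong \pi_+\bigl(i_{\pi^*f,+}\mathcal{O}_{X'}\bigr)$. The crucial point is that the $\mathcal{V}$-filtration interacts with $\pi_+$ up to a shift induced by the relative canonical divisor $K_\pi = \sum k_i E_i$: concretely, the generator $f^s$ on $X$ lifts on $X'$ to $(\pi^* f)^s$ twisted by the Jacobian $\prod_j z_j^{k_{i_j}}$, which combined with the monomial exponents $N_{i_j} s$ shifts the eigenvalues from $-\nu/N_{i_j}$ to $-(k_{i_j}+\nu)/N_{i_j}$ modulo $\mathbb{Z}$. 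Reindexing $\nu$ as $\nu' + 1$ with $\nu' \in \{0, \ldots, N_i - 1\}$ gives exactly the candidate set of the statement, with the additional shift by $k \in \mathbb{Z}_{\geq 0}$ accounting for the further roots in the same $\mathcal{V}$-eigenclass.

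The main obstacle is the compatibility of $\pi_+$ with the $\mathcal{V}$-filtration and the rigorous bookkeeping of the shift by $K_\pi$, which is the $\mathcal{D}$-module incarnation of the same Jacobian factor $|v(z)|^2 \prod |z_j|^{2 k_j}$ that appears in the analytic decomposition \fref{eq:anal-cont0}. The Lichtin refinement, namely that $\nu$ may be restricted to $\{0, \ldots, N_i - 1\}$, is essentially a uniqueness-of-representation statement: every non-negative integer $m$ decomposes uniquely as $\nu + k N_i$ with $\nu \in \{0, \ldots, N_i-1\}$ and $k \in \mathbb{Z}_{\geq 0}$, so the two forms of the candidate set coincide. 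Finiteness of the number of candidate roots hit by $b_f(s)$ then follows from the holonomicity of $i_{f,+} \mathcal{O}_X$, and rationality is automatic since $k_i, N_i, \nu$ are non-negative integers.
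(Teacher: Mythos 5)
This theorem is not proved in the paper at all: it is quoted as a known result of Kashiwara and Lichtin, so there is no internal proof to compare your argument against; I can only assess your sketch on its own terms. Your overall strategy is the right one (Kashiwara's), your monomial identity is correct, and you correctly locate the role of $K_\pi$: the generator downstairs corresponds upstairs to $(\pi^*f)^s$ times the Jacobian factor, so the local computation $\partial_1^{N_1}\cdots\partial_r^{N_r}\, z_1^{N_1(s+1)+k_1}\cdots z_r^{N_r(s+1)+k_r} = \prod_j\prod_{\nu=1}^{N_j}(N_j s+k_j+\nu)\, z_1^{N_1 s+k_1}\cdots z_r^{N_r s+k_r}$ produces exactly the candidates $-(k_i+1+\nu)/N_i$ with $0\le\nu\le N_i-1$. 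Your observation that the restriction of $\nu$ to $\{0,\dots,N_i-1\}$ together with the shift by $k\in\mathbb{Z}_{\ge 0}$ describes the same set of rational numbers as letting $\nu$ range over all of $\mathbb{Z}_{\ge 0}$ is also correct for the statement as formulated here.

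The genuine gap is the step you yourself flag as ``the main obstacle'' and then leave as a black box: the transfer of the functional equation through the proper direct image. Saying that the $\mathcal{V}$-filtration ``interacts with $\pi_+$ up to a shift induced by $K_\pi$'' and that the eigenvalues move ``modulo $\mathbb{Z}$'' is not an argument; it is precisely the content of the theorem. What has to be proved is a divisibility statement of the form $b_f(s)\mid\prod_{k=0}^{q}\widetilde{b}(s+k)$ for some finite $q$, where $\widetilde{b}$ is the $b$-function of the twisted normal-crossing generator upstairs. This requires the coherence (indeed holonomicity) of $\pi_+$ of the relevant module, the fact that $f^s$ downstairs lies in the $\mathcal{D}_X[s]$-module generated by finitely many integer shifts of the pushed-forward generator, and an Artin--Rees/noetherianity argument to bound the shifts; ``finiteness follows from holonomicity'' gestures at this but does not supply it, and the appearance of the extra shifts $-k$ in the candidate list is exactly the output of that argument rather than a bookkeeping afterthought. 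Without this step the proposal reduces the problem to the monomial case only heuristically, so as written it is an outline of Kashiwara's proof rather than a proof.
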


As in the case of the candidate poles of the complex zeta function \( f^s \), the set of candidates provided by \fref{thm:rationality} is usually larger than the set of roots of \( b_f(s) \). Indeed, there are many exceptional divisors of the resolution not contributing to actual poles of \( f^s \) or roots of \( b_f(s) \). Since \( s = -1 \) is always a root of \( b_f(s) \), it is sometimes useful to work with the \emph{reduced} Bernstein-Sato polynomial, \( \widetilde{b}_f(s) := b_f(s)/(s+1) \).

\begin{theorem} [\cite{saito94}] \label{thm:saito}
Let \(f \in \mathcal{O}_{X}\) be non-constant. Then, for any root \( \alpha \) of \( \widetilde{b}_f(s) \), \( \alpha \in [-n + \emph{\textrm{lct}}(f), -\emph{\textrm{lct}}(f)]\). If \(m_{\alpha} \) denotes the multiplicity of \(\alpha\), \(m_{\alpha} \leq n - \emph{\textrm{lct}}(f) - \alpha + 1\).
\end{theorem}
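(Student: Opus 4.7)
The upper bound $\alpha \leq -\textrm{lct}(f)$ is the accessible part. My plan is to combine \fref{thm:rationality} with the stronger statement, due to Koll\'ar, that $-\textrm{lct}(f)$ is itself a root of $b_f(s)$. The latter can be proved by choosing a component $E_i$ of a log resolution realizing the minimum $(k_i+1)/N_i = \textrm{lct}(f)$ and using the local description in \fref{eq:anal-cont0} together with \fref{prop:regularization} to show that, for a test function $\varphi$ concentrated near a generic point of $E_i$, the complex zeta function $\langle f^s, \varphi \rangle$ has a genuine simple pole at $s = -\textrm{lct}(f)$ whose residue is a nonzero integral over $E_i$ that no other divisor can cancel. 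Once this is granted, every root of $b_f(s)$ lies in $(-\infty, -\textrm{lct}(f)]$, and so does every root of $\widetilde{b}_f(s)$ since the latter polynomial is a divisor of the former.

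For the lower bound $\alpha \geq -n + \textrm{lct}(f)$ and the multiplicity bound $m_\alpha \leq n - \textrm{lct}(f) - \alpha + 1$, the plan is to pass to the V-filtration of Malgrange--Kashiwara. Let $i : X \hookrightarrow X \times \mathbb{C}$ be the graph embedding $x \mapsto (x, f(x))$, and set $\mathcal{M} := i_+ \mathcal{O}_X$. The canonical V-filtration $V^\bullet \mathcal{M}$ along $\{t = 0\}$ has the property that $\alpha$ is a root of $b_f$ (modulo $\mathbb{Z}$) precisely when $\textrm{gr}_V^{-\alpha} \mathcal{M}$ is nonzero, and the multiplicity $m_\alpha$ equals the nilpotency index of $t \partial_t + \alpha$ on $\textrm{gr}_V^{-\alpha} \mathcal{M}$. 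The next step is to endow $\mathcal{M}$ with Saito's mixed Hodge module structure: each $\textrm{gr}_V^\alpha \mathcal{M}$ then inherits a Hodge filtration $F^\bullet$ with at most $n$ non-trivial steps. Since $t\partial_t + \alpha$ decreases $F^\bullet$ by one, its nilpotency index is bounded by the number of non-trivial Hodge steps at $\alpha$, and the range of $\alpha$ for which such steps can occur lies in $[-n + \textrm{lct}(f), -\textrm{lct}(f)]$. Tracking the dependence of the number of non-trivial Hodge steps on $\alpha$ yields both the interval constraint and the multiplicity estimate simultaneously.

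The main obstacle is the Hodge-theoretic input: establishing that $i_+ \mathcal{O}_X$ underlies a mixed Hodge module, and proving the strict compatibility of $V^\bullet$ with the Hodge filtration $F^\bullet$. These are the deep results of Saito's theory of mixed Hodge modules and cannot be circumvented by elementary means; once granted, the numerical inequalities reduce to bookkeeping on the indices $p$ for which $F^p \textrm{gr}_V^\alpha \mathcal{M}$ is non-trivial. By contrast, the upper bound requires none of this machinery and can be proved directly from resolution of singularities and the regularization procedure of \fref{sec:regularization}; this asymmetry between the two bounds is precisely why the lower and multiplicity estimates go substantially beyond \fref{thm:rationality}.
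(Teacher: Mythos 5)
The paper itself offers no proof of this statement: it is quoted verbatim from Saito \cite{saito94}, so your sketch has to be measured against Saito's argument rather than anything in the text. For the upper bound your route contains a non sequitur, although the conclusion is easy to repair: knowing that \( -\textrm{lct}(f) \) is itself a root of \( b_f(s) \) (Koll\'ar's theorem) says nothing about where the \emph{other} roots sit, so the inference ``once this is granted, every root of \( b_f(s) \) lies in \( (-\infty, -\textrm{lct}(f)] \)'' does not follow from it. What actually gives the bound is \fref{thm:rationality} alone: every candidate \( -(k_i+1+\nu)/N_i - k \) is at most \( -\min_i (k_i+1)/N_i = -\textrm{lct}(f) \), and \( \widetilde{b}_f \mid b_f \). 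Koll\'ar's statement is a sharpness assertion and is not needed; moreover your proposed proof of it (a nonzero residue along one divisor computing the lct ``that no other divisor can cancel'') glosses over the cancellation issue, which is usually handled by a positivity/monotone-convergence argument for real \( s \downarrow -\textrm{lct}(f) \) rather than by inspecting a single chart.

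The genuine gap is in the part that constitutes the theorem: the lower bound \( \alpha \geq -n + \textrm{lct}(f) \) and the estimate \( m_\alpha \leq n - \textrm{lct}(f) - \alpha + 1 \). You correctly recall that \( m_\alpha \) is the nilpotency order of \( \partial_t t - \alpha \) on the corresponding graded piece \( \mathrm{gr}_V^{\alpha} \) of the V-filtration on \( i_+\mathcal{O}_X \), and that a Hodge filtration with at most \( n \) nontrivial steps bounds nilpotency orders. But the two quantitative inputs you then invoke --- that the nilpotent operator strictly shifts \( F \) on \( \mathrm{gr}_V^{\alpha} \), and, crucially, that the indices \( p \) with \( F_p\, \mathrm{gr}_V^{\alpha} \neq 0 \) are confined to a window of length \( n - \textrm{lct}(f) - \alpha + 1 \) positioned in terms of \( \textrm{lct}(f) \), with no such indices at all outside \( [-n+\textrm{lct}(f), -\textrm{lct}(f)] \) --- are precisely Saito's theorem restated in Hodge-theoretic language, and your sketch offers no argument for them: ``tracking the dependence of the number of non-trivial Hodge steps on \( \alpha \)'' is where the entire proof lives, not bookkeeping. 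In Saito's argument this step rests on the microlocal V-filtration on \( i_+\mathcal{O}_X[\partial_t^{-1}] \), the strict compatibility of \( F \) with \( V \), the identification of the lowest Hodge piece of \( \mathrm{gr}_V^{\alpha} \) in terms of the minimal exponent (which is what brings \( \textrm{lct}(f) \) into the estimate), and a duality exchanging \( \alpha \) and \( -n-\alpha \), which is what produces the two-sided interval and the linear multiplicity bound. As written, the proposal defers the statement to an unproved assertion of essentially the same depth, so it does not constitute a proof.
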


The bounds in \fref{thm:saito} are optimal for isolated quasi-homogeneous singularities.

\vskip 2mm

The Bernstein-Sato functional equation together with integration by parts can be used to obtain the analytic continuation of \( f^s \) in a different way.

\begin{proposition} \label{prop:anal-continuation-bernstein}
The complex zeta function \(f^s\) admits a meromorphic continuation to \( \mathbb{C} \) with poles among the rational numbers \(\alpha - k \) with \( b_f(\alpha) = 0 \) and \( k \in \mathbb{Z}_{\geq 0} \).
\end{proposition}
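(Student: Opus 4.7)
The plan is to use the Bernstein--Sato functional equation of \fref{eq:functional-equation} as a recursive tool that shifts the holomorphy strip of $\langle f^s, \varphi\rangle$ one unit to the left at each step, picking up the poles coming from the zeros of $b_f(s)$.

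First, I would record that for the complex zeta function the integrand $|f|^{2s} = f^s\bar{f}^s$ depends on both $f$ and $\bar{f}$, so the functional equation must be applied on both sides: taking complex conjugates in \fref{eq:functional-equation} gives $\overline{P(s)}\cdot\bar{f}^{s+1} = b_f(s)\,\bar{f}^s$ (noting that $b_f(s)$ has real, in fact rational, coefficients by \fref{thm:rationality}). Multiplying the two identities yields
\begin{equation*}
b_f(s)^2\,|f|^{2s} \;=\; P(s)\,\overline{P(s)}\,|f|^{2(s+1)},
\end{equation*}
an identity of distributions valid at first for $\textrm{Re}(s)>0$, where both sides are given by convergent integrals against $\varphi\in C^\infty_c(\mathbb{C}^n)$.

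Next, since $\varphi$ has compact support I would integrate by parts to move the differential operators onto the test function. Writing $P(s)^{t}$ and $\overline{P(s)}^{t}$ for the formal adjoints (polynomial in $s$ with smooth coefficients), there is no boundary term, and setting $Q(s) := \overline{P(s)}^{t} P(s)^{t}\in\mathcal{D}_X[s]$ one obtains
\begin{equation*}
\langle f^s, \varphi \rangle \;=\; \frac{1}{b_f(s)^2}\,\bigl\langle f^{s+1},\, Q(s)\varphi\bigr\rangle,
\end{equation*}
valid a priori for $\textrm{Re}(s)>0$. The right-hand side, however, makes sense for $\textrm{Re}(s)>-1$ except where $b_f(s)=0$: the pairing $\langle f^{s+1}, Q(s)\varphi\rangle$ is holomorphic on that half-plane because $Q(s)\varphi$ is still smooth and compactly supported and depends polynomially on $s$ (so the remark after \fref{prop:regularization} applies, or one directly invokes holomorphicity of the convergent integral). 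This furnishes a meromorphic extension to $\textrm{Re}(s)>-1$ with poles contained in the zero set of $b_f$.

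Finally, I would iterate. Replacing $s$ by $s+1,\,s+2,\,\dots,\,s+k-1$ and composing, the same manipulation yields, for every $k\in\mathbb{Z}_{\geq 1}$,
\begin{equation*}
\langle f^s, \varphi \rangle \;=\; \frac{1}{\prod_{j=0}^{k-1} b_f(s+j)^2}\,\bigl\langle f^{s+k},\, Q_k(s)\varphi\bigr\rangle,
\end{equation*}
with $Q_k(s)\in\mathcal{D}_X[s]$. The right-hand side is meromorphic on $\textrm{Re}(s)>-k$ with poles only at $s=\alpha-j$ for $j\in\{0,\dots,k-1\}$ and $b_f(\alpha)=0$. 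Since $k$ is arbitrary and the extensions agree on overlaps by the identity principle, they glue to a meromorphic continuation to all of $\mathbb{C}$ with poles contained in $\{\alpha-k:\ b_f(\alpha)=0,\ k\in\mathbb{Z}_{\geq 0}\}$, which is the claim. The only delicate point is the bookkeeping of the integration by parts and the justification that $Q(s)$ is polynomial in $s$, so that the factor $b_f(s)^2$ in the denominator is the sole source of poles at each step; this is routine because $P(s)$ itself is polynomial in $s$ with smooth coefficients independent of $\varphi$.
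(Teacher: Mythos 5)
Your proposal is correct and is essentially the paper's own argument: apply the functional equation in its holomorphic and antiholomorphic forms, pass the operator $\overbar{P}^*P^*(s)$ onto $\varphi$ by integration by parts (no boundary terms by compact support), obtain a meromorphic extension to $\textrm{Re}(s)>-1$ with poles in $b_f^{-1}(0)$, and iterate and glue. The only difference is cosmetic: you justify explicitly that the conjugated equation carries the same $b_f(s)$ via rationality of its coefficients, a point the paper leaves implicit in its definition of $\overbar{P}(s)$.
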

\begin{proof}
We can use the functional \fref{eq:functional-equation} and integration by parts to analytically continue \fref{eq:integral-equation} in the following way
\begin{equation} \label{eq:anal-cont1}
\begin{split}
 \langle f^s, \varphi \rangle = \int_{\mathbb{C}^n} \varphi(z, \bar{z})|f(z)|^{2s} dzd\bar{z} & = \frac{1}{b^2_{f}(s)} \int_{\mathbb{C}^n} \varphi(z, \bar{z}) \big[P(s) \cdot f^{s+1}(z)\big]\big[\overbar{P}(s) \cdot f^{s+1}(\bar{z})\big] dz d\bar{z} \\
 & = \frac{1}{b^2_{f}(s)} \int_{\mathbb{C}^n} \overbar{P}^*P^*(s)\big(\varphi(z, \bar{z})\big)|f(z)|^{2(s+1)} dz d\bar{z}.
\end{split}
\end{equation}
The last term of \fref{eq:anal-cont1} defines an analytic function whenever \(\textrm{Re}(s) > -1\), except for possible poles at \(b_{f}^{-1}(0)\), and it is equal to \(\langle f^s, \varphi \rangle\) in \(\textrm{Re}(s) > 0\).
If a differential operator has the form \(P(s) = \sum_{\beta} a_{\beta}(s, z) (\frac{\partial}{\partial z})^\beta\), we have considered
\begin{equation*}
\overbar{P}(s) := \sum_{\beta} a_{\beta}(\bar{s}, \bar{z}) \Big(\frac{\partial}{\partial \bar{z}}\Big)^\beta, \quad P^*(s) := \sum_{\beta} (-1)^{|\beta|} \Big(\frac{\partial}{\partial z}\Big)^\beta a_{\beta}(z, s),
\end{equation*}
the \emph{conjugate} and \emph{adjoint operator} of \(P(s)\), respectively. Iterating the process we get
\begin{equation} \label{eq:anal-cont2}
\langle f^s, \varphi \rangle = \frac{\langle f^{s+k+1}, \overbar{P}^*P^*(s+k)\cdots \overbar{P}^*P^*(s)(\varphi) \rangle}{b^2_{f}(s) \cdots b^2_{f}(s+k)}, \quad {\textrm{Re}}(s) > -k -1,
\end{equation}
and the result follows.
\end{proof}

The set of poles of the complex zeta function \( f^s \) is known to be exactly the set \( \alpha - k \) with \( b_f(\alpha) = 0 \) and \( k \in \mathbb{Z}_{\geq 0} \) for reduced plane curve singularities and isolated quasi-homogeneous singularities, see \cite[Th. 1.9]{loeser85}. Therefore, at least in these cases, the divisors contributing to poles of the complex zeta function \( f^s \) are the same divisors that contribute to roots of the Bernstein-Sato polynomial \( b_f(s) \). However, even in these cases, it is not straightforward to relate poles of \( f^s \) with roots of \( b_f(s) \). In general, from \fref{thm:saito} and \fref{prop:anal-continuation-bernstein}, one has that,

\begin{corollary} \label{cor:shift-roots}
Every pole \(\sigma \in [-n+\emph{\textrm{lct}}(f), -\emph{\textrm{lct}}(f)]\) of $f^s$ such that \( \sigma + k \) is not a root of \( b_f(s) \) for all \( k \in \mathbb{Z}_{>0} \) is a root of \( b_{f}(s) \).
\end{corollary}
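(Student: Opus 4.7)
The plan is to combine Proposition 3.4 (pole-root relation) with the hypothesis directly; Theorem 3.3 enters only to locate roots inside the Saito interval, but the main implication is essentially a contrapositive shuffle of the relation ``pole $=$ root $-$ non-negative integer''.

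First I would invoke \fref{prop:anal-continuation-bernstein}: since $\sigma$ is a pole of $f^s$, there exist a root $\alpha$ of $b_f(s)$ and a non-negative integer $k \in \mathbb{Z}_{\geq 0}$ such that $\sigma = \alpha - k$. Equivalently, $\sigma + k = \alpha$ is a root of $b_f(s)$ for some $k \geq 0$.

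Next I would split on the value of $k$. If $k > 0$, then $\sigma + k$ is a root of $b_f(s)$ with $k \in \mathbb{Z}_{>0}$, which directly contradicts the standing hypothesis that $\sigma + j$ fails to be a root of $b_f(s)$ for every $j \in \mathbb{Z}_{>0}$. Therefore $k = 0$, and we conclude $\sigma = \alpha$ is itself a root of $b_f(s)$.

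The only subtle point is whether we need $\sigma$ to lie in the Saito interval $[-n+\textrm{lct}(f),-\textrm{lct}(f)]$ at all: if one merely wants ``$\sigma$ is a root'', the interval hypothesis is superfluous given the argument above. Its presence in the statement serves to place $\sigma$ among the roots of the reduced Bernstein-Sato polynomial $\widetilde{b}_f(s)$ via \fref{thm:saito} (so in particular $\sigma \ne -1$ is excluded as a trivial case, and $\sigma$ matches one of the roots predicted by Saito's bound). I would mention this briefly to motivate why the corollary is stated with that restriction, but the logical core of the proof is a one-line consequence of \fref{prop:anal-continuation-bernstein}. I do not expect any real obstacle; the content is entirely in the two cited results.
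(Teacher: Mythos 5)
Your argument is correct and is essentially the paper's own: the corollary is stated there without a separate proof, as an immediate consequence of \fref{prop:anal-continuation-bernstein} (every pole has the form $\alpha-k$ with $b_f(\alpha)=0$, $k\in\mathbb{Z}_{\geq 0}$, so the hypothesis forces $k=0$) together with \fref{thm:saito}, which is exactly the contrapositive shuffle you carry out. Your side remark that the interval condition is not logically needed for the implication (it records where the roots must lie by Saito's bound, and is what makes the hypothesis verifiable in the later applications) is also accurate.
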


\subsection{Yano's conjecture} \label{sec:yano-conjecture}

We assume now that \( f \) has an isolated singularity at \( \boldsymbol{0} \) and we work locally around this point. Let \( \mu := \dim_{\mathbb{C}} \mathcal{O}_{X, \boldsymbol{0}}/ \langle \partial f/ \partial z_1, \dots, \partial f/\partial z_n \rangle \) be the \emph{Milnor number} of \( f \) at \( \boldsymbol{0} \). The Brieskorn lattice, \cite{brieskorn70} \( H''_{f, \boldsymbol{0}} := \Omega^n_{X, \boldsymbol{0}} / \dd f \wedge \dd \Omega^{n-1}_{X, \boldsymbol{0}} \), of \( f \) at \( \boldsymbol{0} \) has a structure of \( \mathbb{C}\{t\} \)-module given by the multiplication by \( f \), moreover, it is a free module of dimension \( \mu \) and carries a connection \( \partial_t \). Malgrange shows in \cite{malgrange75} that if \( \widetilde{H}''_{f, \boldsymbol{0}} := \sum_{k \geq 0} ( \partial_t t )^k H''_{f, \boldsymbol{0}} \) is the saturation of the Brieskorn lattice, then \( \widetilde{b}_{f, \boldsymbol{0}}(s) \) is the minimal polynomial of the endomorphism
\begin{equation*}
- \partial_t t : \widetilde{H}''_{f, \boldsymbol{0}}/ t \widetilde{H}''_{f, \boldsymbol{0}} \longrightarrow \widetilde{H}''_{f, \boldsymbol{0}} / t \widetilde{H}''_{f, \boldsymbol{0}}.
\end{equation*}
Therefore, the degree of the Bernstein-Sato polynomial of an isolated singularity is at most \( \mu \). If \( \widetilde{\alpha}_1, \dots, \widetilde{\alpha}_r, r \leq \mu \) are the roots of \( \widetilde{b}_{f, \boldsymbol{0}} \), Malgrange also proves that the polynomial \( \prod_j \big(s - \exp({2 \pi i \widetilde{\alpha}_j})\big)  \) is a divisor of the characteristic polynomial of the monodromy and a multiple of the minimal polynomial of the monodromy. Following the terminology of Yano, the \emph{\( b \)-exponents} of an isolated singularity \( f \) are the \( \mu \) roots \( {\alpha}_1, \dots, {\alpha}_\mu \) of the characteristic polynomial of the action of \( \partial_t t \) on \( \widetilde{H}''_{f, \boldsymbol{0}}/ t \widetilde{H}''_{f, \boldsymbol{0}} \).

\vskip 2mm

Yano's conjecture \cite{yano82} on the \( b \)-exponents of generic irreducible plane curves reads as follows. Let \( (n, \beta_1, \dots, \beta_g) \) be the characteristic sequence of a plane branch with \( g \geq 1 \) being the number of characteristic exponents. With the same notation as in \cite{yano82} define,
\begin{equation} \label{eq:yano-definitions}
\begin{split}
& e_{0} := n, \quad e_{i} := \gcd(n, \beta_1, \dots, \beta_i), \qquad i = 1, \dots, g, \\
& r_i := \frac{\beta_i + n}{e_{i}}, \quad R_i := \frac{\beta_i e_{i-1} + \beta_{i-1}(e_{i-2} - e_{i-1}) + \cdots + \beta_1(e_{0} - e_{1})}{e_{i}}, \\
& r'_0 := 2, \quad r'_i := r_{i-1} + \left\lfloor\frac{\beta_i - \beta_{i-1}}{e_{i-1}}\right\rfloor + 1 = \left\lfloor\frac{r_i e_{i}}{e_{i-1}}\right\rfloor + 1, \\
& R'_0 := n, \quad R'_i := R_{i-1} + \beta_i - \beta_{i-1} = \frac{R_i e_{i}}{e_{i-1}}.
\end{split}
\end{equation}
Inspired by A'Campo's formula \cite{acampo75} for the eigenvalues of the monodromy, Yano defines the following polynomial with fractional powers in \( t \)
\begin{equation} \label{eq:yano-generating-series}
R\big((n, \beta_1, \dots, \beta_g), t\big) := \sum_{i = 1}^g t^{\frac{r_i}{R_i}} \frac{1 - t}{1 - t^{\frac{1}{R_i}}} - \sum_{i = 0}^g t^{\frac{r'_i}{R'_i}} \frac{1 - t}{1 - t^{\frac{1}{R'_i}}} + t,
\end{equation}
and proves that \( R\big((n, \beta_1, \dots, \beta_n), t\big) \) has non-negative coefficients. Finally,

\begin{conjecture}[Yano, \cite{yano82}]
For generic curves among all irreducible plane curves with characteristic sequence \( (n, \beta_1, \dots, \beta_g) \) the \( b \)-exponents \( \alpha_1, \dots, \alpha_\mu \) are given by the generating function \( R\big((n, \beta_1, \dots, \beta_n), t\big) \). That is,
\begin{equation*}
\sum_{i = 1}^\mu t^{\alpha_i} = R\big((n, \beta_1, \dots, \beta_g), t\big).
\end{equation*}
\end{conjecture}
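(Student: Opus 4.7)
The plan is to deduce Yano's conjecture from \fref{thm:generic}, which computes the poles of $f_{gen}^s$ for a generic plane branch, together with the chain: \emph{b}-exponents $\leftrightarrow$ roots of $\widetilde{b}_f(s)$ $\leftrightarrow$ poles of $f^s$. The argument splits into three steps: (i) rewrite Yano's series $R\bigl((n,\beta_1,\dots,\beta_g), t\bigr)$ of \fref{eq:yano-generating-series} as an explicit list of $\mu$ rationals and match them with the candidate poles of \fref{thm:plane-branch-candidates}; (ii) use \fref{thm:generic} and \fref{cor:shift-roots} to promote these candidates to roots of $b_{f_{gen}}(s)$; and (iii) invoke the distinct-eigenvalue hypothesis together with Malgrange's characterization from \fref{sec:yano-conjecture} to pass from roots of $b_{f_{gen}}(s)$ to the full list of \emph{b}-exponents counted with multiplicity.

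For step (i), I would interpret each exponent $r_i/R_i$ and $r'_i/R'_i$ of \fref{eq:yano-definitions} as a log-discrepancy ratio $(k_j+1)/N_j$ attached either to a rupture divisor $E_j$ or to one of the $g$ divisors associated with the characteristic exponents, and expand each factor $(1-t)/(1-t^{1/R_i})$ as a finite geometric sum over the arithmetic progression $-(k_j + 1 + \nu)/N_j$, $\nu \geq 0$, singled out by \fref{thm:plane-branch-candidates}. Using the formulas for the semigroup generators $\bar{\beta}_i$ and for $(N_j, k_j)$ reviewed in \fref{sec:semigroup-monomial-curve} and \fref{sec:resolution-plane-curves}, this reduces to a finite combinatorial identity in the characteristic sequence.

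For step (ii), \fref{thm:generic} ensures that the candidates produced above are actual poles of $f_{gen}^s$, and \fref{cor:shift-roots} turns each such pole into a root of $b_{f_{gen}}(s)$ up to a nonnegative integer shift; a direct check using \fref{thm:saito} and the explicit log-canonical threshold of a plane branch should confirm that all candidates already lie in the strip $(-1-\textrm{lct}(f_{gen}), -\textrm{lct}(f_{gen})]$, so no shift is ever needed. For step (iii), Malgrange's identification of $\widetilde{b}_{f_{gen}}(s)$ with the minimal polynomial of $-\partial_t t$ on the $\mu$-dimensional space $\widetilde{H}''/t\widetilde{H}''$, combined with the assumption that the monodromy eigenvalues $\exp(2\pi i \widetilde{\alpha}_j)$ are pairwise distinct, forces the candidates found in step (i) to be pairwise distinct modulo $\mathbb{Z}$; hence $-\partial_t t$ has $\mu$ distinct eigenvalues and its characteristic polynomial (whose roots are the \emph{b}-exponents) coincides with its minimal polynomial. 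The \emph{b}-exponents are therefore precisely the $\mu$ numbers appearing in Yano's series, each with multiplicity one.

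\textbf{Expected main obstacle.} The bookkeeping in step (i) is the delicate part: although each individual identification of a Yano exponent with a log-discrepancy ratio is straightforward, the global comparison requires a careful induction on the number $g$ of characteristic exponents, and the systematic exclusion, via \fref{thm:residue-non-rupture}, of the many non-rupture divisors appearing in the embedded resolution that would otherwise produce spurious candidates via \fref{thm:rationality}. A secondary subtlety is the dimension count in step (iii): one must check that Yano's polynomial carries exactly $\mu$ monomials (this should follow from A'Campo's formula for the characteristic polynomial of the monodromy specialised to a plane branch) and that, under the distinct-eigenvalue hypothesis, this $\mu$ matches the dimension of $\widetilde{H}''_{f_{gen},\boldsymbol{0}}/t\widetilde{H}''_{f_{gen},\boldsymbol{0}}$ so that the argument of step (iii) accounts for all of the \emph{b}-exponents at once.
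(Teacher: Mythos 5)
Your route is the same as the paper's: match Yano's exponents with the candidate sets \( \Pi_i \) of \fref{thm:plane-branch-candidates} via \( R_i = N_{p_i} \), \( r_i = k_{p_i}+1 \), \( R'_i = N_{q_i} \), \( r'_i = k_{q_i}+1 \); use \fref{thm:generic} to make them poles of \( f_{gen}^s \); promote poles to roots of \( b_{f_{gen}}(s) \); and use the pairwise-distinct eigenvalue hypothesis to identify the \( \mu \) distinct roots of the minimal polynomial with the \( b \)-exponents (your step (iii) is exactly the paper's closing argument, and the induction on \( g \) you fear in step (i) is unnecessary because \fref{thm:plane-branch-candidates} has already discarded the non-rupture divisors). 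The genuine gap is in step (ii): the claim that all candidates lie in \( (-1-\textrm{lct}(f_{gen}), -\textrm{lct}(f_{gen})] \), so that \fref{cor:shift-roots} applies with no shift, is false in general. The threshold \( \textrm{lct} = -\sigma_{1,0} \) is attained at the first rupture divisor, while the candidates of a later rupture divisor start at \( \sigma_{i,0} < \sigma_{1,0} \) and descend by almost a full unit. For example, for the characteristic sequence \( (4,6,101) \) one has \( \textrm{lct} = 5/12 \), \( \overline{\beta}_2 = 107 \), and \( \sigma_{2,212} = -317/214 \in \Pi_2 \) is smaller than \( -1-\textrm{lct} = -17/12 \); this semigroup does satisfy the pairwise-distinct eigenvalue hypothesis, so the problematic case occurs within the scope of the statement. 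Note also that \fref{thm:saito} bounds roots of \( \widetilde{b}_f(s) \), not poles of \( f^s \), so it cannot be used to confine the candidates. For such low-lying poles, \fref{cor:shift-roots} only yields that \( \sigma_{i,\nu} \) is a root of \( b_{f_{gen}}(s) \) provided \( \sigma_{i,\nu}+1 \) is not a root, and your proposal leaves precisely this point open.

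The missing ingredient, which is how the paper closes the argument in \fref{cor:conjecture-yano}, is Loeser's theorem that for a reduced plane curve every root of the Bernstein-Sato polynomial has the form \( \sigma - k \) with \( \sigma \) a pole of \( f^s \) and \( k \in \mathbb{Z}_{\geq 0} \). If \( \sigma_{i,\nu}+1 \) were a root of \( b_{f_{gen}}(s) \), then by \fref{thm:plane-branch-candidates} it would have to equal some \( \sigma_{i',\nu'} \in \Pi_{i'} \) with \( i' \neq i \); but \( \sigma_{i,\nu} \) and \( \sigma_{i,\nu}+1 \) define the same monodromy eigenvalue, so two of the \( \mu \) elements of \( \Pi \) would yield equal eigenvalues, contradicting the hypothesis. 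Without this (or some substitute ruling out the integer shift), step (ii) does not establish that all elements of \( \Pi \) are roots of \( b_{f_{gen}}(s) \), and the identification with the \( b \)-exponents in step (iii) cannot be completed.
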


\section{The semigroup of a plane branch and its monomial curve} \label{sec:semigroup-monomial-curve}

The characteristic sequence of an irreducible plane curve is a complete topological invariant. Two germs of a curve are topologically equivalent, if and only if, they have the same characteristic sequence. Alternatively, one defines the semigroup of a plane branch from its associated valuation. The aim of this section is to first describe all the terminology related to the characteristic sequence and the semigroup of a plane branch and present the relation between them. Secondly, we will introduce Teissier's monomial curve associated to a semigroup and its deformations. The main references for this section are Zariski's book \cite{zariski-moduli} and its appendix \cite{teissier-appendix} by Teissier.

\subsection{The semigroup of a plane branch} \label{sec:semigroup}

We begin by fixing a germ of a plane branch \( f : (\mathbb{C}^2, \boldsymbol{0}) \longrightarrow (\mathbb{C}, 0) \) with \emph{characteristic sequence} \((n, \beta_1, \dots, \beta_g), n, \beta_i \in \mathbb{Z}_{>0} \). The characteristic sequence can be obtained from the Puiseux parameterization of \( f \). After an analytic change of variables we can always assume that \( n < \beta_1 < \cdots < \beta_g \). Define the integers \( e_i := \gcd(e_{i-1}, \beta_{i}), e_0 := n \), with \( n \) being the multiplicity of \( f \). Notice that they satisfy \( e_0 > e_1 > \cdots > e_g = 1 \) and \( e_{i-1} \centernot\mid \beta_i \).

\vskip 2mm

We set \( n_i := e_{i-1} / e_i \) for \( i = 1, \dots, g \) and, by convention, \( \beta_{0} := 0 \) and \( n_0 := 0 \). The integers \( n_1, \dots, n_g \) are strictly larger than 1 and we have that \( e_{i-1} = n_i n_{i+1} \cdots n_g \) for \( i = 1, \dots, g \). In particular, \( n = n_1 \cdots n_g \). The fractions \( m_i / n_1 \cdots n_{i} \), with \( m_i \) defined as \( m_i := \beta_{i} / e_i \), are the reduced \emph{characteristic exponents} appearing in the Puiseux series of \( f \). The tuples \( (m_i, n_i) \) satisfy \( \gcd(m_i, n_i) = 1 \) and are usually called the \emph{Puiseux pairs}.

\vskip 2mm

Let \( \mathcal{O}_f \) be the local ring of \( f \). The Puiseux parameterization of \( f \) gives an injection \( \mathcal{O}_{f} \xhookrightarrow{\quad} \mathbb{C}\{t\} \). We denote the \( t \)-adic valuation of \( \mathcal{O}_f \) by \( \nu \) and \( \Gamma \subseteq \mathbb{Z}_{\geq 0} \) denotes the associated semigroup
\[ \Gamma := \big\{\nu(g) \in \mathbb{Z}_{\geq 0} \ |\ g \in \mathcal{O}_{f} \setminus \{0\}\big\}. \]
Since \( f \) is irreducible there exists a minimum integer \( c \in \mathbb{Z}_{>0} \), the \emph{conductor} of \( \Gamma \), such that \( (t^c) \cdot \mathbb{C}\{t\} \subseteq \mathcal{O}_{f} \). As a result, any integer in \( [c, \infty) \) must belong to \( \Gamma \), which implies that \( \mathbb{Z}_{\geq 0} \setminus \Gamma \) is finite. Since \( \mathbb{Z}_{\geq 0} \setminus \Gamma \) is finite, we can find a minimal generating set \( \langle \overline{\beta}_{0}, \overline{\beta}_{1}, \dots, \overline{\beta}_{g} \rangle \) of \( \Gamma \), i.e. \( \overline{\beta}_{i} \) are the minimal integers such that \( \overline{\beta}_{i} \not\in \langle \overline{\beta}_0, \overline{\beta}_{1}, \dots, \overline{\beta}_{i-1} \rangle \), with \( \overline{\beta}_{0} < \overline{\beta}_{1} < \cdots < \overline{\beta}_{g} \) and \( \gcd(\overline{\beta}_{0} , \overline{\beta}_{1}, \dots, \overline{\beta}_{g}) = 1 \).

\vskip 2mm

The semigroup generators can be computed from the characteristic sequence in the following way, see \cite[II.3]{zariski-moduli},
\begin{equation} \label{eq:generators-semigroup1}
\overline{\beta}_{i} = (n_1 - 1)n_2 \cdots n_{i-1} \beta_{1} + (n_2- 1)n_3 \cdots n_{i-1} \beta_{2} + \cdots + (n_{i-1} - 1) \beta_{i-1} + \beta_{i},
\end{equation}
for \( i = 2, \dots, g \) and with  \( \overline{\beta}_{0} = n, \overline{\beta}_{1} = \beta_{1} \). Recursively this can be expressed as
\begin{equation} \label{eq:generators-semigroup2}
\overline{\beta}_{i} = n_{i-1} \overline{\beta}_{i-1} - \beta_{i-1} + \beta_{i}, \qquad i = 2, \dots, g.
\end{equation}
By \fref{eq:generators-semigroup1}, \( \gcd(e_{i-1}, \overline{\beta}_{i}) = e_i \) with \( e_0 = \overline{\beta}_{0} = n \) and \( e_{i-1} \centernot\mid \overline{\beta}_{i} \). In the same way as before, we define the sequence of integers \( \overline{m}_{i} := \overline{\beta}_{i} / e_i \) which will be useful in the sequel. The integers \( \overline{m}_i, i = 1, \dots, g \) can be obtained recursively using \fref{eq:generators-semigroup2}, namely
\begin{equation} \label{eq:reduced-generators-semigroup}
\overline{m}_{i} = n_i n_{i-1} \overline{m}_{i-1} - n_i m_{i-1} + m_i, \quad i = 2, \dots, g,
\end{equation}
with \( \overline{m}_0 = 1, \overline{m}_1 = m_1 \). Note that \fref{eq:reduced-generators-semigroup} implies \( \gcd(\overline{m}_{i} , n_i) = 1 \) for \( i = 1, \dots, g \). Finally, we define \( q_i := (\beta_{i} - \beta_{i-1})/e_i = m_i - n_i m_{i-1} \) for \( i = 1, \dots, g \). Alternatively, by \fref{eq:generators-semigroup2}, these quantities are \( q_i = (\overline{\beta}_{i} - n_{i-1}\overline{\beta}_{i-1})/e_i = \overline{m}_i - n_i n_{i-1} \overline{m}_{i-1} \) for \( i = 2, \dots, g \) and \( q_1 = \overline{m}_{1} = m_1 \).

\vskip 2mm

The following lemma is a fundamental property of the semigroups coming from plane branches.
\begin{lemma}[{\cite[I.2]{teissier-appendix}}] \label{lemma:semigroup}
If \( \Gamma = \langle \overline{\beta}_{0}, \overline{\beta}_{1}, \dots, \overline{\beta}_{g} \rangle \) is the semigroup of a plane branch, then
\[ n_i \overline{\beta}_i \in \langle \overline{\beta}_{0}, \overline{\beta}_1, \dots, \overline{\beta}_{i-1} \rangle. \]
\end{lemma}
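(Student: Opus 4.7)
I would prove the lemma by induction on $i$, using the recursive description of the semigroup generators in \fref{eq:generators-semigroup2}.

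The base case $i = 1$ is immediate: from $n_1 = n/e_1$, $\overline{\beta}_1 = \beta_1$, and $m_1 = \beta_1/e_1 \in \mathbb{Z}_{>0}$, one obtains
\[
n_1 \overline{\beta}_1 \;=\; \frac{n}{e_1}\,\beta_1 \;=\; m_1\, n \;=\; m_1\, \overline{\beta}_0 \;\in\; \langle \overline{\beta}_0 \rangle.
\]
For the inductive step $i \geq 2$, I would apply \fref{eq:generators-semigroup2} together with the identity $n_i e_i = e_{i-1}$ to split
\[
n_i \overline{\beta}_i \;=\; n_i n_{i-1}\,\overline{\beta}_{i-1} \;+\; n_i (\beta_i - \beta_{i-1}) \;=\; n_i n_{i-1}\,\overline{\beta}_{i-1} \;+\; e_{i-1}\, q_i.
\]
By the inductive hypothesis applied at index $i-1$, one has $n_{i-1} \overline{\beta}_{i-1} = \sum_{j=0}^{i-2} a_j \overline{\beta}_j$ with $a_j \in \mathbb{Z}_{\geq 0}$, so the first summand contributes the non-negative combination $\sum_{j=0}^{i-2} (n_i a_j) \overline{\beta}_j$ in $\langle \overline{\beta}_0, \dots, \overline{\beta}_{i-2} \rangle$. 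The problem therefore reduces to amalgamating the leftover $e_{i-1} q_i$ with this combination to obtain a non-negative combination of $\overline{\beta}_0, \dots, \overline{\beta}_{i-1}$.

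To handle the leftover I would first note that
\[
\gcd(\overline{\beta}_0, \dots, \overline{\beta}_{i-1}) \;=\; \gcd(n, \beta_1, \dots, \beta_{i-1}) \;=\; e_{i-1},
\]
which follows by observing that each $\overline{\beta}_j$ ($j \leq i-1$) is a $\mathbb{Z}$-linear combination of $n, \beta_1, \dots, \beta_j$ via \fref{eq:generators-semigroup1}, and conversely each $\beta_j$ is a $\mathbb{Z}$-linear combination of $\overline{\beta}_0, \dots, \overline{\beta}_j$ by solving \fref{eq:generators-semigroup2} for $\beta_j$. Thus the rescaled integers $\overline{\beta}_j/e_{i-1}$ are coprime, and the numerical semigroup $\Gamma' := \langle \overline{\beta}_0/e_{i-1}, \dots, \overline{\beta}_{i-1}/e_{i-1} \rangle \subseteq \mathbb{Z}_{\geq 0}$ has finite complement. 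Dividing the displayed equation by $e_{i-1}$, the task becomes to show that $n_i n_{i-1} \overline{\beta}_{i-1}/e_{i-1} + q_i \in \Gamma'$. Since $n_i n_{i-1} \overline{\beta}_{i-1}/e_{i-1}$ is itself a large element of $\Gamma'$ (by induction), this can be verified by checking that the sum exceeds the Frobenius number of $\Gamma'$, or equivalently by redistributing coefficients using the trivial syzygies $(\overline{\beta}_j/e_{i-1})\overline{\beta}_k - (\overline{\beta}_k/e_{i-1})\overline{\beta}_j = 0$ to absorb the Bezout integer expression of $e_{i-1} q_i$ into the positive buffer supplied by $n_i n_{i-1} \overline{\beta}_{i-1}$.

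The main obstacle is precisely this final sign-bookkeeping, which is elementary but tedious. A structurally cleaner alternative, which I would mention as an aside, uses Teissier's approximate roots \cite{teissier-appendix}: one constructs elements $f_0, f_1, \dots, f_g \in \mathcal{O}_f$ with $\nu(f_j) = \overline{\beta}_j$, taking $f_0 = x$, $f_1 = y$, and $f_i$ for $i \geq 2$ as the Tschirnhaus transform of a suitable power of $f_{i-1}$. A Weierstrass preparation argument then shows that $f_i^{n_i}$ has a unique lowest-valuation term of the form $\prod_{j<i} f_j^{a_{ij}}$ with $a_{ij} \in \mathbb{Z}_{\geq 0}$; taking $t$-adic valuations immediately yields $n_i \overline{\beta}_i = \sum_{j<i} a_{ij} \overline{\beta}_j$ and so the lemma, entirely bypassing the sign analysis.
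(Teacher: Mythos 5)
The paper itself gives no argument for this lemma (it is quoted from Teissier's appendix), so your proposal must stand on its own, and as written it does not: the decisive step is missing. Everything up to the reduction is fine — the base case, the identity \( n_i\overline{\beta}_i = n_i n_{i-1}\overline{\beta}_{i-1} + e_{i-1}q_i \) from \fref{eq:generators-semigroup2}, and \( \gcd(\overline{\beta}_0,\dots,\overline{\beta}_{i-1}) = e_{i-1} \) are all correct, and after dividing by \( e_{i-1} \) the task becomes exactly to show that \( \overline{m}_i = n_i n_{i-1}\overline{m}_{i-1} + q_i \) lies in \( \Gamma' = \langle \overline{\beta}_0/e_{i-1},\dots,\overline{\beta}_{i-1}/e_{i-1}\rangle \) (your target element is just \( \overline{m}_i \), by \fref{eq:reduced-generators-semigroup}). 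But that membership is the entire content of the lemma, and you only assert it. Writing \( \overline{m}_i \) as (element of \( \Gamma' \)) plus (positive integer \( q_i \)) proves nothing unless you supply a quantitative bound, namely that \( \overline{m}_i \) exceeds the conductor (Frobenius number) of \( \Gamma' \). You never establish such a bound; ``a large element of \( \Gamma' \) (by induction)'' is not quantified, and ``redistributing coefficients using the trivial syzygies'' is not an argument — the syzygies you write down do not change which nonnegative combinations exist.

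The gap is fillable, but filling it is the real work: one needs (a) control of the conductor of \( \Gamma' \), e.g.\ by recognizing \( \Gamma' \) as the plane-branch semigroup \fref{eq:semigroup-max-contact} of the \( i \)--th maximal contact element so that \fref{eq:conductor2} applies (note that in the paper's logical order this is not free, since the reverse implication of \fref{prop:characterization} is itself derived from this lemma), or at least a usable upper bound on the Frobenius number of \( \Gamma' \); and (b) the strict inequalities \( n_j\overline{\beta}_j < \overline{\beta}_{j+1} \), i.e.\ \( n_j n_{j+1}\overline{m}_j < \overline{m}_{j+1} \), fed into a telescoping estimate showing \( c(\Gamma') \leq \overline{m}_i \). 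Neither (a) nor (b) appears in your write-up. Your closing aside via approximate roots is indeed the standard route (essentially Teissier's and Zariski's proof), but as stated it also begs the question: the claim that \( f_i^{n_i} \) has lowest-valuation term a monomial \( \prod_{j<i} f_j^{a_{ij}} \) with \( a_{ij} \in \mathbb{Z}_{\geq 0} \) is precisely the statement to be proved, and the ``Weierstrass preparation argument'' for it is not sketched in any checkable form.
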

The property in \fref{lemma:semigroup} together with the fact that \( n_i \overline{\beta}_{i} < \overline{\beta}_{i+1} \), which follows directly from \( \beta_i < \beta_{i+1} \) and \fref{eq:generators-semigroup2}, characterize the semigroups of plane branches.
\begin{proposition}[{\cite[I.3.2]{teissier-appendix}}] \label{prop:characterization}
\( \Gamma = \langle \overline{\beta}_{0}, \overline{\beta}_{1}, \dots, \overline{\beta}_{g}\rangle \subseteq \mathbb{Z}_{\geq 0} \) is the semigroup of a plane branch, if and only if, \( \gcd(\overline{\beta}_{0}, \overline{\beta}_{1}, \dots, \overline{\beta}_{g}) = 1 \),
\( n_i \overline{\beta}_i \in \langle \overline{\beta}_{0}, \overline{\beta}_1, \dots, \overline{\beta}_{i-1} \rangle \) for \( i = 1, \dots, g \), and \( n_i \overline{\beta}_{i} < \overline{\beta}_{i+1} \), for \( 1 \leq i < g \).
\end{proposition}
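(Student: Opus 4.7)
The plan is to prove the two implications separately; the forward direction is essentially a bookkeeping argument from what has already been established, while the converse requires constructing a plane branch realizing the prescribed semigroup.

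For the forward direction, assume $\Gamma = \langle \overline{\beta}_0, \dots, \overline{\beta}_g \rangle$ is the semigroup of a plane branch with characteristic sequence $(n, \beta_1, \dots, \beta_g)$. From \fref{eq:generators-semigroup1} one reads off $\gcd(\overline{\beta}_0, \dots, \overline{\beta}_i) = \gcd(e_{i-1}, \overline{\beta}_i) = e_i$, and in particular the total gcd equals $e_g = 1$. The divisibility statement $n_i \overline{\beta}_i \in \langle \overline{\beta}_0, \dots, \overline{\beta}_{i-1}\rangle$ is exactly \fref{lemma:semigroup}. Finally, the inequality $n_i \overline{\beta}_i < \overline{\beta}_{i+1}$ is immediate from the recurrence \fref{eq:generators-semigroup2}, which gives $\overline{\beta}_{i+1} = n_i \overline{\beta}_i + (\beta_{i+1} - \beta_i)$, together with the standing assumption $\beta_i < \beta_{i+1}$ on the characteristic sequence.

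For the converse, given $\overline{\beta}_0 < \overline{\beta}_1 < \cdots < \overline{\beta}_g$ satisfying the three stated properties, I would reverse the recurrence \fref{eq:generators-semigroup2} to define candidate characteristic data by $n := \overline{\beta}_0$, $\beta_1 := \overline{\beta}_1$, and $\beta_i := \overline{\beta}_i - n_{i-1}\overline{\beta}_{i-1} + \beta_{i-1}$ for $i \geq 2$, where $n_i := e_{i-1}/e_i$ and $e_i := \gcd(e_{i-1}, \overline{\beta}_i)$. The hypothesis $n_i \overline{\beta}_i < \overline{\beta}_{i+1}$ translates precisely into $\beta_i < \beta_{i+1}$, the condition $n_i \overline{\beta}_i \in \langle \overline{\beta}_0, \dots, \overline{\beta}_{i-1}\rangle$ (combined with minimality) ensures $\gcd(e_{i-1}, \overline{\beta}_i) = e_i$ and $e_{i-1} \centernot\mid \beta_i$, and the assumption $\gcd(\overline{\beta}_0, \dots, \overline{\beta}_g) = 1$ gives $e_g = 1$. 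Thus $(n, \beta_1, \dots, \beta_g)$ is a valid characteristic sequence. I would then take a Puiseux expansion $y(t) = \sum_{i \geq 1} c_i\, t^{\beta_i/n}$ with generic nonzero coefficients at the characteristic exponents, which parametrizes an irreducible germ $f \in \mathbb{C}\{x,y\}$ with characteristic sequence $(n, \beta_1, \dots, \beta_g)$. Applying the forward direction to this $f$, the generators of its semigroup are produced by \fref{eq:generators-semigroup2} and, by the defining recurrence of the $\beta_i$, coincide with the original $\overline{\beta}_i$, so the semigroup of $f$ is $\Gamma$.

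The main obstacle is the realization step in the converse: verifying that a Puiseux series with the prescribed characteristic exponents actually defines an irreducible branch whose associated $t$-adic valuation produces the semigroup $\Gamma$. The irreducibility and the characteristic sequence of the resulting germ are classical (the denominators of the successive characteristic exponents are exactly $n_1, \dots, n_g$, which multiply to $n$), but rather than recomputing the semigroup from scratch via Newton-polygon/approximate-root techniques, I would short-circuit this by invoking \fref{eq:generators-semigroup1}-\fref{eq:generators-semigroup2} applied to the constructed branch, which is the most efficient route and avoids duplicating the content of Teissier's original argument in \cite{teissier-appendix}.
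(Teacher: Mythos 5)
Your argument is correct, and the forward implication is exactly the paper's: the gcd condition from the $e_i$, the semigroup membership from \fref{lemma:semigroup}, and the inequality $n_i\overline{\beta}_i<\overline{\beta}_{i+1}$ read off \fref{eq:generators-semigroup2} together with $\beta_i<\beta_{i+1}$. For the converse, however, you take a genuinely different route. The paper argues through Teissier's monomial curve: under the stated hypotheses \fref{prop:teissier-appendix2} exhibits $(C^\Gamma,\boldsymbol{0})$ as the quasi-homogeneous complete intersection \fref{eq:monomial-complete-intersection}, the semigroup-constant deformation $h_i-u_{i+1}=0$ (admissible precisely because $n_i\overline{\beta}_i<\overline{\beta}_{i+1}$) is then eliminated to the explicit recursive plane equation $f=f_{g+1}$ of \fref{eq:monomial-curve-plane}, and this plane branch realizes $\Gamma$. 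You instead invert \fref{eq:generators-semigroup2} to reconstruct a candidate characteristic sequence, realize it by a Puiseux parametrization with nonzero coefficients at the exponents $\beta_i$, and then identify the semigroup of the resulting branch by reapplying Zariski's formulas \fref{eq:generators-semigroup1}--\fref{eq:generators-semigroup2}. Both proofs lean on cited classical inputs (Teissier's deformation results versus Zariski \cite{zariski-moduli} plus the realizability of a prescribed characteristic sequence by a Puiseux series), and there is no circularity in your use of \fref{eq:generators-semigroup1}, since that formula is independent of the proposition; your version is shorter and more parametric, while the paper's construction has the added value of producing the explicit equation \fref{eq:monomial-curve-plane} that is subsequently upgraded to the family $f_{\boldsymbol{t},\boldsymbol{\lambda}}$ of \fref{prop:monomial-curve-plane} on which the rest of the paper depends. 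One point to state explicitly in your converse: you use the standing convention that the $\overline{\beta}_i$ form a minimal increasing system of generators, which is what gives $e_{i-1}\centernot\mid\overline{\beta}_i$, hence $n_i>1$ and, since $\beta_i\equiv\overline{\beta}_i \pmod{e_{i-1}}$, also $e_{i-1}\centernot\mid\beta_i$ and $n=\overline{\beta}_0<\beta_1$; without this the tuple $(n,\beta_1,\dots,\beta_g)$ need not be a genuine characteristic sequence.
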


The conductor \( c \) of \( \Gamma \) can be computed as \( c = n_g \overline{\beta}_{g} - \beta_{g} - (n - 1) \), see \cite[II.3]{zariski-moduli}. Combining this formula with \fref{eq:generators-semigroup2}, we get the following formula for the conductor
\begin{equation} \label{eq:conductor2}
c = \sum_{i = 1}^{g} (n_i - 1) \overline{\beta}_{i} - n + 1.
\end{equation}
Finally, we would like to notice that the Milnor number, \( \mu \hspace{-1pt} = \hspace{-1pt} \dim_{\mathbb{C}} \hspace{-1pt} \mathbb{C}\{x, \hspace{-1pt} y\} / \langle \partial f\hspace{-0.5pt}/\hspace{-0.5pt}\partial x, \partial f\hspace{-0.5pt}/\hspace{-0.5pt}\partial y \rangle \), of the branch \( f \) coincides with its conductor \( c \), see \cite[6.4]{casas}. Therefore, \( \mu \) can be computed from a semigroup \( \Gamma = \langle \overline{\beta}_0, \overline{\beta}_1, \dots, \overline{\beta}_g \rangle \) using \fref{eq:conductor2}.

\subsection{The monomial curve and its deformations} \label{sec:monomial-curve}

Let \( \Gamma = \langle \overline{\beta}_0, \overline{\beta}_1, \dots, \overline{\beta}_g \rangle \subseteq \mathbb{Z}_{\geq 0}\) be a semigroup such that \( \mathbb{Z}_{\geq 0} \setminus \Gamma \) is finite, that is \( \gcd(\overline{\beta}_0, \dots, \overline{\beta}_g) = 1 \), not necessarily the semigroup of a plane branch. We use the notations and definitions from \fref{sec:semigroup}. Following \cite{teissier-appendix}, let \( (C^\Gamma, \boldsymbol{0}) \subset (X, \boldsymbol{0}) \) be the curve defined via the parameterization
\[ C^\Gamma : u_i = t^{\overline{\beta}_i}, \quad i \leq 0 \leq g, \]
where \( X := \mathbb{C}^{g+1} \). The germ \( (C^\Gamma, \boldsymbol{0} ) \) is irreducible since \( \gcd(\overline{\beta}_0, \dots, \overline{\beta}_g) = 1 \) and its local ring \( \mathcal{O}_{C^\Gamma, \boldsymbol{0}} \) equals
\[ \mathbb{C}\big\{C^\Gamma\big\} = \mathbb{C}\big\{ t^{\overline{\beta}_0}, \dots, t^{\overline{\beta}_g} \big\} \xhookrightarrow{\quad} \mathbb{C}\{t\}, \]
which has a natural structure of graded subalgebra of \( \mathbb{C}\{t\} \).
\begin{theorem}[{\cite[I.1]{teissier-appendix}}] \label{thm:teissier-appendix1}
Every branch \( (C, \boldsymbol{0}) \) with semigroup \( \Gamma \) is isomorphic to the generic fiber of a one parameter complex analytic deformation of \( (C^\Gamma, \boldsymbol{0}) \).
\end{theorem}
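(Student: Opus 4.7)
The plan is to realize \( (C, \boldsymbol{0}) \) as an embedded subcurve of \( (X, \boldsymbol{0}) = (\mathbb{C}^{g+1}, \boldsymbol{0}) \) and then exploit the weighted \( \mathbb{C}^* \)-action on \( X \) with weights \( (\overline{\beta}_0, \dots, \overline{\beta}_g) \) to degenerate \( C \) into \( C^\Gamma \). The defining ideal of \( C^\Gamma \) is homogeneous for the grading \( \deg u_i = \overline{\beta}_i \), so rescaling the coordinates of \( C \) under this action produces a family whose central fiber collapses precisely to \( C^\Gamma \).

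First, for each \( i = 0, \dots, g \), I would choose \( \xi_i \in \mathcal{O}_{C, \boldsymbol{0}} \) with \( \nu(\xi_i) = \overline{\beta}_i \); such elements exist because \( \overline{\beta}_i \in \Gamma \) is by definition the valuation of some element of the local ring. Composing with the normalization \( \mathcal{O}_{C, \boldsymbol{0}} \hookrightarrow \mathbb{C}\{t\} \) and rescaling \( t \) if necessary, I can write
\[ \xi_i(t) = t^{\overline{\beta}_i} + \sum_{k > \overline{\beta}_i} a_{i, k}\, t^{k}. \]
Next I would argue that \( \mathcal{O}_{C, \boldsymbol{0}} \) is generated as an analytic \( \mathbb{C} \)-algebra by \( \xi_0, \dots, \xi_g \): given \( \eta \in \mathcal{O}_{C, \boldsymbol{0}} \) with \( \nu(\eta) = k \in \Gamma \), one writes \( k = \sum a_i \overline{\beta}_i \) with \( a_i \in \mathbb{Z}_{\geq 0} \) and subtracts a suitable scalar multiple of \( \xi_0^{a_0} \cdots \xi_g^{a_g} \) to strictly raise the valuation; iterating and using that \( \mathbb{Z}_{\geq 0} \setminus \Gamma \) is finite yields the claim. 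This provides a closed embedding \( (C, \boldsymbol{0}) \hookrightarrow (X, \boldsymbol{0}) \) given in parametric form by \( (u_0, \dots, u_g) = (\xi_0(t), \dots, \xi_g(t)) \).

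Next I would define the one-parameter family by
\[ \Xi_i(t, v) := v^{-\overline{\beta}_i}\, \xi_i(v t) = t^{\overline{\beta}_i} + \sum_{k > \overline{\beta}_i} a_{i, k}\, v^{k - \overline{\beta}_i}\, t^{k}, \]
which is a convergent power series in \( (t, v) \) with no negative powers of \( v \). Letting \( \pi : (\mathcal{C}, \boldsymbol{0}) \to (\mathbb{C}, 0) \) be the resulting family in \( (X \times \mathbb{C}, \boldsymbol{0}) \), whose fiber over \( v \) is the image of \( t \mapsto (\Xi_0(t, v), \dots, \Xi_g(t, v)) \), the fiber at \( v = 0 \) is parameterized by \( u_i = t^{\overline{\beta}_i} \) and hence equals \( (C^\Gamma, \boldsymbol{0}) \), while for \( v \neq 0 \) the substitutions \( t \mapsto v t \) and \( u_i \mapsto v^{-\overline{\beta}_i} u_i \) are analytic isomorphisms, so the fiber is isomorphic to \( (C, \boldsymbol{0}) \). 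In particular the generic fiber recovers \( C \).

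The main technical obstacle is to verify that \( \pi \) is flat at \( \boldsymbol{0} \), which is what upgrades the family of fibers into a genuine analytic deformation. A practical route is to start from a set of binomial generators \( u^{\alpha} - u^{\beta} \) of \( I(C^\Gamma) \), which exist because \( C^\Gamma \) is the toric curve associated to \( \Gamma \), and lift each such binomial to an analytic function on \( X \times \mathbb{C} \) vanishing on \( \mathcal{C} \) whose restriction to \( v = 0 \) is the original generator. Flatness over the discrete valuation ring \( \mathbb{C}\{v\} \) then reduces to checking that \( \mathcal{O}_{\mathcal{C}, \boldsymbol{0}} \) is torsion-free as a \( \mathbb{C}\{v\} \)-module, which is transparent from the explicit parameterization \( \Xi_i(t, v) \), since the image of \( v \) in \( \mathbb{C}\{t, v\} \) is a nonzerodivisor.
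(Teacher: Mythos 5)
The paper itself gives no argument for this statement (it is quoted from Teissier's appendix), but your strategy is the classical one used there: embed \( (C,\boldsymbol{0}) \) in \( (\mathbb{C}^{g+1},\boldsymbol{0}) \) by elements \( \xi_i \) of valuation \( \overline{\beta}_i \) and specialize along the weighted \( \mathbb{C}^* \)-action, i.e.\ degenerate \( \mathcal{O}_{C,\boldsymbol{0}} \) onto its associated graded ring \( \mathbb{C}[t^{\overline{\beta}_0},\dots,t^{\overline{\beta}_g}] \). Your flatness reduction is fine: defining \( \mathcal{O}_{\mathcal{C},\boldsymbol{0}} \) as the image of \( \mathbb{C}\{u_0,\dots,u_g,v\} \to \mathbb{C}\{t,v\} \), \( u_i \mapsto \Xi_i \), it injects into a domain, hence is \( v \)-torsion-free and flat over \( \mathbb{C}\{v\} \), and the fibers over \( v\neq 0 \) are isomorphic to \( C \).

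The genuine gap is the identification of the scheme-theoretic special fibre with \( (C^\Gamma,\boldsymbol{0}) \), which is exactly what makes this a deformation \emph{of the monomial curve}. Flatness plus the parametrization only give that the fibre at \( v=0 \) \emph{contains} \( C^\Gamma \): every \( F \) vanishing on \( \mathcal{C} \) restricts at \( v=0 \) into \( I(C^\Gamma) \), but you need the converse, namely that each binomial generator \( u^{\alpha}-u^{\beta} \) of \( I(C^\Gamma) \) (of weighted degree \( d \)) lifts to an element of \( I \). You assert this lifting as routine, but it is the heart of the matter: it holds because \( \xi^{\alpha}-\xi^{\beta}\in\mathcal{O}_{C,\boldsymbol{0}} \) has valuation \( e>d \) with \( e\in\Gamma \) and can be rewritten as a \emph{convergent} series in \( \xi_0,\dots,\xi_g \) all of whose monomials have weighted degree \( >d \); multiplying each such monomial by \( v^{(\text{degree})-d} \) then produces the lift. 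A naive order-by-order construction in powers of \( v \) appears to get stuck whenever \( d+j\notin\Gamma \); the obstruction vanishes only because values of elements of \( \mathcal{O}_{C,\boldsymbol{0}} \) lie in \( \Gamma \), which is again the same adapted-generation statement. This also exposes the second soft spot: your argument that \( \xi_0,\dots,\xi_g \) generate \( \mathcal{O}_{C,\boldsymbol{0}} \) analytically is an infinite valuation-raising iteration, so finiteness of \( \mathbb{Z}_{\geq 0}\setminus\Gamma \) alone does not conclude; one needs a convergence or completion/faithful-flatness argument (e.g.\ finiteness of \( \mathbb{C}\{t\} \) over \( \mathbb{C}\{\xi_0\} \) by Weierstrass preparation, then compare completions). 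As written, you have produced a flat family with generic fibre \( C \) whose special fibre contains the monomial curve, which is strictly weaker than the theorem; supplying the adapted generation with convergence (or quoting the Rees-algebra specialization statement in the analytic category) is what closes it.
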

With extra structure on the semigroup \( \Gamma \) it is possible to obtain equations for \( (C^\Gamma, \boldsymbol{0}) \).
\begin{proposition}[{\cite[I.2]{teissier-appendix}}] \label{prop:teissier-appendix2}
If \( \Gamma \) satisfies \fref{lemma:semigroup}, the branch \( (C, \boldsymbol{0}) \subset (X, \boldsymbol{0}) \) is a quasi-homogeneous complete intersection with equations
\begin{equation} \label{eq:monomial-complete-intersection}
h_i := u_i^{n_i} - u_0^{l^{(i)}_0} u_1^{l^{(i)}_1} \cdots u_{i-1}^{l^{(i)}_{i-1}} = 0, \quad 1 \leq i \leq g,
\end{equation}
and weights \( \overline{\beta}_0, \overline{\beta}_1, \dots, \overline{\beta}_g \), where
\[ n_i \overline{\beta}_i = \overline{\beta}_{0} l_0^{(i)} + \cdots + \overline{\beta}_{i-1} l_{i-1}^{(i)} \in \langle \overline{\beta}_{0} , \dots, \overline{\beta}_{i-1} \rangle. \]
\end{proposition}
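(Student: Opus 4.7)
The plan is to show the ideal $I := (h_1, \dots, h_g)$ cuts out $C^\Gamma$ in $X$ (hence $C^\Gamma$ is a complete intersection since $\textrm{codim}\, C^\Gamma = g$ equals the number of generators), and that each $h_i$ is manifestly quasi-homogeneous under the weights $\deg u_j := \overline{\beta}_j$. The first step is to invoke \fref{lemma:semigroup} to pick non-negative integers $l_j^{(i)}$ with $n_i \overline{\beta}_i = \sum_{j<i} l_j^{(i)} \overline{\beta}_j$. By construction $h_i$ is then quasi-homogeneous of weight $n_i \overline{\beta}_i$, and substituting the parameterization $u_j = t^{\overline{\beta}_j}$ gives $h_i(u) = t^{n_i \overline{\beta}_i} - t^{n_i \overline{\beta}_i} = 0$, so $I$ is contained in the ideal of $C^\Gamma$ and induces a graded surjection $\pi : A := \mathbb{C}\{u_0, \dots, u_g\}/I \twoheadrightarrow \mathbb{C}\{C^\Gamma\}$. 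It only remains to prove $\pi$ is injective.

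Next, I would use the defining relations $u_i^{n_i} \equiv u_0^{l_0^{(i)}} \cdots u_{i-1}^{l_{i-1}^{(i)}} \pmod{I}$ iteratively (from $i = g$ down to $i = 1$) to rewrite every monomial modulo $I$ in the normal form $u_0^{a_0} u_1^{a_1} \cdots u_g^{a_g}$ with $0 \leq a_i < n_i$ for $i = 1, \dots, g$. Since $I$ is quasi-homogeneous with positive weights, each weighted graded piece of $A$ is finite-dimensional and these polynomial reductions extend to convergent power series, so the $n := n_1 \cdots n_g$ monomials $\{u_1^{a_1} \cdots u_g^{a_g} : 0 \leq a_i < n_i\}$ generate $A$ as a $\mathbb{C}\{u_0\}$-module.

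Finally, the heart of the argument is to show these generators are $\mathbb{C}\{u_0\}$-independent, equivalently that their images $t^{\sum a_i \overline{\beta}_i}$ under $\pi$ are $\mathbb{C}\{t^n\}$-linearly independent in $\mathbb{C}\{t\}$ (which is free of rank $n$ over $\mathbb{C}\{t^n\}$ with basis $1, t, \dots, t^{n-1}$). This reduces to showing the exponents $\sum a_i \overline{\beta}_i$ are pairwise distinct modulo $n = \overline{\beta}_0$. Suppose $\sum a_i \overline{\beta}_i \equiv \sum a'_i \overline{\beta}_i \pmod{n}$. Reducing modulo $e_{g-1}$, which divides $n$ and each $\overline{\beta}_j$ for $j < g$ because $e_{g-1} = \gcd(n, \overline{\beta}_1, \dots, \overline{\beta}_{g-1})$, leaves $(a_g - a'_g)\overline{\beta}_g \equiv 0 \pmod{e_{g-1}}$; since $\gcd(\overline{\beta}_g, e_{g-1}) = e_g = 1$, this forces $a_g \equiv a'_g \pmod{n_g}$ and hence $a_g = a'_g$. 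Iterating modulo $e_{g-2}, \dots, e_1$ yields $a_i = a'_i$ for all $i$, so $\pi$ is injective and $A \cong \mathbb{C}\{C^\Gamma\}$.

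The main obstacle is precisely this last uniqueness-mod-$n$ step: it is what forces all the structural hypotheses on $\Gamma$ into use, leveraging the arithmetic relation $\gcd(\overline{\beta}_i, e_{i-1}) = e_i$ encoded in the semigroup. Everything else is formal once the normal form and quasi-homogeneity are in place, but the distinctness modulo $\overline{\beta}_0$ is what rules out hidden relations among the proposed generators and thereby pins down the minimal number of equations as $g$.
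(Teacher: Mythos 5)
The paper itself offers no proof of this proposition: it is quoted directly from Teissier's appendix \cite{teissier-appendix}, so there is no internal argument to compare yours against. Your proof is correct and is essentially the classical one underlying that reference: the hypothesis of \fref{lemma:semigroup} gives the exponents \(l_j^{(i)}\), each \(h_i\) is quasi-homogeneous of weight \(n_i\overline{\beta}_i\) and vanishes on the parameterization, the relations put every monomial in the normal form \(u_0^{a_0}u_1^{a_1}\cdots u_g^{a_g}\) with \(0\le a_i<n_i\), and the decisive arithmetic step --- that the exponents \(\sum_{i\ge 1} a_i\overline{\beta}_i\) are pairwise distinct modulo \(\overline{\beta}_0\), proved by descending through the congruences using \(\gcd(\overline{\beta}_i,e_{i-1})=e_i\) and \(e_{i-1}/e_i=n_i\) --- is exactly what forces freeness over \(\mathbb{C}\{u_0\}\cong\mathbb{C}\{t^{n}\}\) and hence injectivity, so that \(I=(h_1,\dots,h_g)\) is the full ideal of the curve and the complete-intersection claim follows from the codimension count. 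Two small points. First, the phrase ``these polynomial reductions extend to convergent power series'' is the only thin spot: termwise reduction of a convergent series produces coefficients \(c_{(a)}(u_0)\) whose convergence is not automatic from the degreewise finiteness. The clean way to get generation is to observe that \(A/(u_0)A\) is spanned by the \(n_1\cdots n_g\) normal-form monomials, hence finite dimensional, and then invoke the Weierstrass finiteness theorem together with Nakayama to conclude that these monomials generate \(A\) as a \(\mathbb{C}\{u_0\}\)-module; your independence argument then finishes the proof unchanged. Second, a harmless indexing slip: after \(a_g,\dots,a_2\) are matched, the exponent \(a_1\) is pinned down by the original congruence modulo \(e_0=\overline{\beta}_0\) (using \(\gcd(\overline{\beta}_1,e_0)=e_1\)), not by a reduction modulo \(e_1\).
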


Applying the theory of miniversal deformations to the previous results Teissier proves the following result.

\begin{theorem}[{\cite[I.2]{teissier-appendix}}] \label{thm:teissier}
There exists a germ of a flat morphism
\[ p : (X_\Gamma, \boldsymbol{0}) \subset (X \times \mathbb{C}^{\tau_{-}}, \boldsymbol{0}) \longrightarrow (\mathbb{C}^{\tau_{-}}, \boldsymbol{0}) \]
consisting on the second projection from \( X_\Gamma \), such that it is a miniversal semigroup constant deformation of \( (C^\Gamma, \boldsymbol{0}) \) with the property that, for any branch \( (C, \boldsymbol{0}) \) with semigroup \( \Gamma \), there exists \( \boldsymbol{v}_C \in \mathbb{C}^{\tau_{-}} \) such that \( (p^{-1}(\boldsymbol{v}_C), \boldsymbol{0}) \) is analytically isomorphic to \( (C, \boldsymbol{0}) \).
\end{theorem}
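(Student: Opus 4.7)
The plan is to apply the deformation theory of isolated complete intersection singularities to the monomial curve $(C^{\Gamma}, \boldsymbol{0})$ and to restrict, using the natural grading, to the subfamily of semigroup-constant deformations. By \fref{prop:teissier-appendix2}, the property \fref{lemma:semigroup} of $\Gamma$ guarantees that $(C^\Gamma, \boldsymbol{0})$ is a quasi-homogeneous complete intersection cut out in $X = \mathbb{C}^{g+1}$ by the $g$ equations $h_1, \dots, h_g$, with $h_i$ of weight $n_i \overline{\beta}_i$ under the $\mathbb{C}^*$-action giving weight $\overline{\beta}_i$ to $u_i$. Being an isolated complete intersection singularity, $(C^\Gamma, \boldsymbol{0})$ admits, by the theorem of Grauert--Kas--Schlessinger, a miniversal deformation which is unobstructed and whose base has tangent space canonically identified with the cotangent module $T^1_{C^\Gamma}$. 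For a complete intersection, $T^1_{C^\Gamma}$ has an explicit presentation as the cokernel of the Jacobian of $(h_1, \dots, h_g)$ acting on $\mathcal{O}_{C^\Gamma}^{g+1} \to \mathcal{O}_{C^\Gamma}^g$, and it is a finite-dimensional $\mathbb{C}$-vector space.

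Next I would exploit the $\mathbb{C}^*$-action to decompose $T^1_{C^\Gamma} = \bigoplus_{\nu \in \mathbb{Z}} T^1_{C^\Gamma, \nu}$ into graded pieces and set $\tau_{-} := \dim_\mathbb{C} \bigoplus_{\nu < 0} T^1_{C^\Gamma, \nu}$. Choosing a basis of the strictly negatively graded part represented by homogeneous polynomials $m_{i, \alpha}$ of weights $w_{i, \alpha} > n_i \overline{\beta}_i$, the family $X_\Gamma \subset X \times \mathbb{C}^{\tau_{-}}$ is defined by the perturbed equations
\[
H_i(u, \boldsymbol{v}) := h_i(u) + \sum_{\alpha} v_{i, \alpha}\, m_{i, \alpha}(u), \qquad i = 1, \dots, g,
\]
and I would let $p$ be the second projection. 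Flatness of $p$ follows from the fact that the $H_i$ form a regular sequence in a neighborhood of $\boldsymbol{0}$, which is inherited from the complete intersection property of the central fiber. To verify that $p$ is miniversal for semigroup-constant deformations I would show both directions separately: on one hand, any deformation whose perturbations involve only monomials of strictly higher weight than the corresponding $h_i$ admits a Puiseux-type parametrization $u_i(t) = t^{\overline{\beta}_i} + \sum_{m > \overline{\beta}_i} c_{i, m}(\boldsymbol{v}) t^m$ obtained by solving $H_i(u(t), \boldsymbol{v}) = 0$ recursively in $t$-adic order, so the valuation of $u_i$ equals $\overline{\beta}_i$ and the semigroup of the generic fiber contains, hence equals, $\Gamma$; on the other hand, any semigroup-constant deformation can be normalized by a $\mathbb{C}^*$-equivariant change of coordinates on $X$ to absorb all non-negatively graded perturbation terms, which is the Pinkham argument for equivariant miniversal deformations of quasi-homogeneous singularities.

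Finally, the surjectivity of fibers onto isomorphism classes of branches with semigroup $\Gamma$ follows by combining \fref{thm:teissier-appendix1} with versality: given any branch $(C, \boldsymbol{0})$ with semigroup $\Gamma$, \fref{thm:teissier-appendix1} exhibits it as the generic fiber of a one-parameter semigroup-constant deformation of $(C^\Gamma, \boldsymbol{0})$; by miniversality this one-parameter deformation is induced by a germ of morphism $(\mathbb{C}, 0) \to (\mathbb{C}^{\tau_{-}}, \boldsymbol{0})$, and evaluating at a generic point of the source produces the desired $\boldsymbol{v}_C \in \mathbb{C}^{\tau_{-}}$ with $p^{-1}(\boldsymbol{v}_C) \cong C$. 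The main obstacle I anticipate is the equivalence between the negatively graded part of $T^1_{C^\Gamma}$ and semigroup-constant deformations: while the Puiseux-expansion direction is explicit, the reverse direction requires the Pinkham-style argument that the $\mathbb{C}^*$-action on the base of the full miniversal deformation is induced by an action on the total space, so that any non-negative-weight deformation is analytically trivial along the $\mathbb{C}^*$-orbit; this subtlety is where one needs to be careful about working with germs and convergence of the Puiseux series.
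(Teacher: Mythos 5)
First, note that the paper does not prove this statement: \fref{thm:teissier} is quoted verbatim from Teissier's appendix \cite[I.2]{teissier-appendix} and used as an input (the paper's own contribution built on it is \fref{prop:monomial-curve-plane}). So there is no internal proof to compare against; your sketch has to be judged as a reconstruction of Teissier's argument. At the level of strategy it is the right one — quasi-homogeneous complete intersection structure from \fref{prop:teissier-appendix2}, the graded \( T^1 \) of the miniversal deformation, restriction to the strictly negative-weight directions, flatness by perturbing the regular sequence, and then \fref{thm:teissier-appendix1} plus versality to hit every branch with semigroup \( \Gamma \).

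However, the two steps that carry the actual content of Teissier's theorem are gaps in your write-up. (i) Semigroup constancy of the negatively graded family: exhibiting a parametrization \( u_i(t) = t^{\overline{\beta}_i} + \cdots \) (itself only asserted, and only for the ``generic'' fiber, whereas the theorem needs every fiber as a germ along the section) shows the fiber's semigroup \emph{contains} \( \Gamma \); but containment gives \( \delta(\text{fiber}) \leq \delta(\Gamma) \), which is the wrong inequality for concluding equality — a strictly larger semigroup is exactly what a partial smoothing would produce, and smoothing directions do exist in the full miniversal deformation, so some argument specific to the negative-weight restriction is indispensable here and ``contains, hence equals'' is a non sequitur. (ii) Miniversality \emph{among semigroup-constant deformations}: the claim that any semigroup-constant deformation can be pushed into the negative-weight subfamily is delegated to ``the Pinkham argument,'' but the statement you invoke (triviality of non-negative-weight perturbations along the \( \mathbb{C}^* \)-orbit) is neither precise nor established, and it is precisely the hard half of \cite[I.2]{teissier-appendix}; your closing paragraph acknowledges this obstacle but does not resolve it. The flatness step and the final step (inducing the one-parameter deformation of \fref{thm:teissier-appendix1} from the miniversal semigroup-constant base and taking a nearby nonzero parameter value as \( \boldsymbol{v}_C \)) are fine, modulo the fact that the latter presupposes (ii). As it stands, the proposal is an accurate road map but not a proof of the theorem it cites.
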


The miniversal deformation in \fref{thm:teissier} can be made explicit, see \cite[I.2]{teissier-appendix}. Consider the Tjurina module of the complete intersection \( (C^\Gamma, \boldsymbol{0}) \),
\[ T^1_{C^\Gamma, \boldsymbol{0}} = \mathcal{O}_{X, \boldsymbol{0}}^g\Big/\left(J\boldsymbol{h} \cdot \mathcal{O}_{X, \boldsymbol{0}}^{g+1} + \langle h_1, \dots, h_g \rangle \cdot \mathcal{O}_{X, \boldsymbol{0}}^g\right), \]
where \( J \boldsymbol{h} \cdot \mathcal{O}_{X, \boldsymbol{0}}^{g+1} \) is the submodule of \( \mathcal{O}_{X, \boldsymbol{0}}^g \) generated by the columns of the Jacobian matrix of the morphism \( \boldsymbol{h} = (h_1, \dots, h_g) \). Since \( (C^\Gamma, \boldsymbol{0}) \) is an isolated singularity, \( T^1_{C^\Gamma, \boldsymbol{0}} \) is a finite dimensional \( \mathbb{C} \)--vector space of dimension \( \tau \). Moreover, since \( (C^\Gamma, \boldsymbol{0}) \) is Gorenstein, \(\tau = 2 \cdot \# (\mathbb{Z}_{>0} \setminus \Gamma)\), see \cite[Prop. 2.7]{teissier-appendix}.

\vskip 2mm

Let \( \boldsymbol{\phi}_1, \dots, \boldsymbol{\phi}_\mu \) be a basis of \( T^1_{X, \boldsymbol{0}} \). It is easy to see that we can take representatives for the vectors \( \boldsymbol{\phi}_r \) in \( \mathcal{O}_{X}^g \) having only one non-zero monomial entry \( \phi_{r, i} \). Since \( (C^\Gamma, \boldsymbol{0}) \) is quasi-homogeneous, we can endow \( T^1_{X, \boldsymbol{0}} \) with a structure of graded module in such a way that using only the elements \( \boldsymbol{\phi}_1, \dots, \boldsymbol{\phi}_{\tau_{-}} \) with negative weight, \( X_\Gamma \) is defined from \fref{eq:monomial-complete-intersection} by
\[ H_i := h_i + \sum_{r = 1}^{\tau_{-}} v_r \phi_{r, i}(u_0, \dots, u_g) = 0,  \quad 1 \leq i \leq g,\]
with the weight of \(\phi_{r,i} \) strictly bigger than \( n_i \overline{\beta}_i\), see \cite[Th. 2.10]{teissier-appendix}. One can check that the classes of the vectors \( (u_2, 0, \dots, 0), (0, u_3, 0, \dots, 0), \dots, (0, 0, \dots, u_g, 0) \) are \( \mathbb{C} \)--independent in \( T^1_{C^\Gamma, \boldsymbol{0}} \). Thus, if \( \Gamma \) is a plane branch semigroup, then \( n_i \overline{\beta}_i < \overline{\beta}_{i+1}, 1 \leq i < g \) and these vectors are part of the miniversal semigroup constant deformation of \( (C^\Gamma, \boldsymbol{0}) \).

\vskip 2mm

For \( \Gamma = \langle \overline{\beta}_0, \overline{\beta}_1, \dots, \overline{\beta}_g \rangle \) the semigroup of a plane branch, consider the following semigroup constant deformation of \( (C^\Gamma, \boldsymbol{0}) \)
\[ C: H_i' := h_i - u_{i+1} = 0, \quad 1 \leq i \leq g. \]
Define \( f_0 := x, f_1 := y \) and set recursively,
\begin{equation} \label{eq:monomial-curve-plane}
f_{i+1} := f_{i}^{n_i} - f_{0}^{l_0^{(i)}} f_1^{l_1^{(i)}} \cdots f_{i-1}^{l_{i-1}^{(i)}} \quad \text{with} \quad n_i \overline{\beta}_i = \overline{\beta}_{0} l_0^{(i)} + \cdots + \overline{\beta}_{i-1} l_{i-1}^{(i)},
\end{equation}
for \( 1 \leq i \leq g \). Then, we can embedded \( (C, \boldsymbol{0}) \) in the plane in such a way that it has equation \( f := f_{g+1} \) that looks like
\[ \left( \cdots \left( \left( \cdots \left( \left(y^{n_1} - x^{l_0^{(1)}}\right)^{n_2} \hspace{-0.2cm} - x^{l_0^{(2)}}y^{l_0^{(2)}}\right)^{n_3} \hspace{-0.3cm} - \cdots\right)^{n_i} \hspace{-0.3cm} - x^{l_0^{(i)}} \cdots f_{i-1}^{l_{i-1}^{(i)}}\right)^{n_{i+1}} \hspace{-0.45cm} - \cdots \right)^{n_{g-1}} \hspace{-0.45cm} -\, x^{l_0^{(g)}} \cdots f_{g-1}^{l^{(g)}_{g-1}}. \]
This proves the reverse implication in \fref{prop:characterization}.

\vskip 2mm

In the following sections, it will be essential to have a certain semigroup constant deformation of a plane branch equation, similar to \( f_{g+1} \) in \fref{eq:monomial-curve-plane}, in such a way that any other plane branch with the same semigroup is analytically equivalent to some fiber of that deformation. From the previous discussion we obtain the following result.
\begin{proposition} \label{prop:monomial-curve-plane}
Let \( \Gamma = \langle \overline{\beta}_0, \overline{\beta}_1, \dots, \overline{\beta}_g \rangle \) be a plane branch semigroup. Consider, with the same notation as above,
\begin{equation} \label{eq:plane-branch-def}
f_{i+1} = f_{i}^{n_i} - \lambda_i f_{0}^{l_0^{(i)}} f_1^{l_1^{(i)}} \cdots f_{i-1}^{l_{i-1}^{(i)}} + \hspace{-10mm} \sum_{\overline{\beta}_{0} k_0 + \cdots + \overline{\beta}_{i} k_{i} > n_{i} \overline{\beta}_{i}} \hspace{-10mm} t^{(i)}_{\underline{k}} f_0^{k_0}f_1^{k_1}\cdots f_{i}^{k_{i}},
\end{equation}
with \( \lambda_i \neq 0, \lambda_1 = 1 \) for \( i = 1, \dots, g \) and the sum being finite. Define, \[ f_{\boldsymbol{t}, \boldsymbol{\lambda}}(x, y) := f_{g+1}(x, y; \underline{t}^{(1)}, \dots, \underline{t}^{(g)}; \lambda_2, \dots, \lambda_g). \] Then, \( \{ f_{\boldsymbol{t}, \boldsymbol{\lambda}}(x, y)\}_{\boldsymbol{\lambda} \in \mathbb{C}^{g-1}} \) is an infinite family of semigroup constant deformations, all having semigroup \( \Gamma \), with the property that any other plane branch with semigroup \( \Gamma \) is analytically equivalent to a fiber of one element of the family.
\end{proposition}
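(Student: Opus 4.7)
My plan is to derive the family \fref{eq:plane-branch-def} from Teissier's miniversal semigroup constant deformation $p : (X_\Gamma, \boldsymbol{0}) \longrightarrow (\mathbb{C}^{\tau_-}, \boldsymbol{0})$ of \fref{thm:teissier} by a recursive elimination of the coordinates $u_2, \dots, u_g$ down to the plane $(u_0, u_1)$. I would begin by invoking the explicit description of $X_\Gamma$ given right after \fref{thm:teissier}, with equations $H_i = h_i + \sum_r v_r \phi_{r,i}$. Since $\Gamma$ is a plane branch semigroup, we have $n_i \overline{\beta}_i < \overline{\beta}_{i+1}$ for $1 \leq i < g$, which, as remarked in the paragraph before the statement, places the vectors $(0, \dots, u_{i+1}, \dots, 0)$ (with $u_{i+1}$ in the $i$-th component) inside the chosen basis of $T^1_{C^\Gamma, \boldsymbol{0}}$. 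I denote the corresponding deformation parameters by $\nu_{i+1}$, so that $H_i = h_i + \nu_{i+1} u_{i+1} + (\text{other })\phi\text{-terms}$.

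Next, on the Zariski open locus $\{\nu_2 \cdots \nu_g \neq 0\} \subset \mathbb{C}^{\tau_-}$, I would solve the equations $H_i = 0$ recursively for $u_{i+1}$ as a polynomial in $(x, y) := (u_0, u_1)$. By induction on $i$, and after rescaling each $u_j$ by a suitable product of powers of $\nu_2, \dots, \nu_j$, the substitution yields polynomials $f_{i+1}(x,y)$ matching the recursion \fref{eq:plane-branch-def}. The weight constraint $\overline{\beta}_0 k_0 + \cdots + \overline{\beta}_i k_i > n_i \overline{\beta}_i$ in \fref{eq:plane-branch-def} is precisely the translation of the condition that the $\phi_{r,i}$ have weight strictly greater than $n_i \overline{\beta}_i$. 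The coefficient $\lambda_1$ is normalized to $1$ by rescaling $y$; the remaining $\lambda_2, \dots, \lambda_g \in \mathbb{C}^*$ collect the residual products of $\nu_j$-factors that cannot be simultaneously absorbed without perturbing the admissible higher-weight monomials $f_0^{k_0} \cdots f_i^{k_i}$, so they must be kept as free parameters.

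For the covering statement, I would apply \fref{thm:teissier} directly: any plane branch $(C, \boldsymbol{0})$ with semigroup $\Gamma$ is analytically isomorphic to $p^{-1}(\boldsymbol{v}_C)$ for some $\boldsymbol{v}_C \in \mathbb{C}^{\tau_-}$. Because $(C, \boldsymbol{0})$ has embedding dimension $2$, the linear projection $(u_0, \dots, u_g) \mapsto (u_0, u_1)$ must realize $(C, \boldsymbol{0})$ as an isomorphic plane curve, which forces the components of $\boldsymbol{v}_C$ along the directions $(0, \dots, u_{i+1}, \dots, 0)$ to be nonzero (after a quasi-homogeneous change of the $u$-coordinates if needed). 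The elimination procedure above then applies and produces values $(\boldsymbol{t}, \boldsymbol{\lambda})$ such that $f_{\boldsymbol{t}, \boldsymbol{\lambda}}$ cuts out a curve analytically equivalent to $(C, \boldsymbol{0})$. Semigroup-constancy of each subfamily $\{f_{\boldsymbol{t}, \boldsymbol{\lambda}}\}_{\boldsymbol{t}}$ for fixed $\boldsymbol{\lambda}$ is then automatic, since the whole construction is a descent of the semigroup-constant $X_\Gamma$.

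The main obstacle I foresee is the bookkeeping in the elimination step, namely verifying that the substitution $u_{i+1} \mapsto f_{i+1}(x, y)$ into the higher equations $H_j$, $j > i$, produces precisely the normal form \fref{eq:plane-branch-def} with the stated weight restriction and no spurious additional monomials; this relies crucially on the quasi-homogeneous structure of Teissier's deformation and on the fact that the $\phi_{r,j}$ involving $u_{i+1}, \dots, u_g$ can be re-expressed in $(x,y)$ while preserving the strict weight inequality. A secondary subtlety is the claim that the full parameter range $\boldsymbol{\lambda} \in (\mathbb{C}^*)^{g-1}$ is genuinely needed, i.e.\ that the $\lambda_i$'s cannot all be simultaneously scaled to $1$ by an analytic change of variables preserving the shape of \fref{eq:plane-branch-def}; this amounts to counting the number of coordinate rescalings available (essentially one, namely $y \mapsto c y$) against the $g$ leading coefficients appearing in the recursion.
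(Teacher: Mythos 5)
Your overall route is the paper's: start from Teissier's miniversal semigroup-constant deformation of \( (C^\Gamma, \boldsymbol{0}) \) from \fref{thm:teissier}, use \( n_i \overline{\beta}_i < \overline{\beta}_{i+1} \) to put the vectors \( (u_2, 0, \dots, 0), \dots, (0, \dots, u_g, 0) \) into the basis of \( T^1_{C^\Gamma, \boldsymbol{0}} \), restrict to the locus where the corresponding parameters are nonzero, and eliminate \( u_2, \dots, u_g \) to land on equations of the shape \fref{eq:plane-branch-def}; this elimination, with \( \lambda_i \) a product of powers of the linear-term parameters and the higher-weight condition inherited from the grading of the \( \phi_{r,i} \), is exactly what the paper does.

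The genuine gap is in the covering step. You assert that because \( (C, \boldsymbol{0}) \) has embedding dimension \( 2 \), the projection \( (u_0, \dots, u_g) \mapsto (u_0, u_1) \) \emph{must} realize the fiber \( p^{-1}(\boldsymbol{v}_C) \) as an isomorphic plane curve, and that this \emph{forces} the components of \( \boldsymbol{v}_C \) along the \( u_{i+1} \)-directions to be nonzero. As stated this is circular: the projection is an isomorphism onto its image precisely when the equations \( H_i = 0 \) can be solved for \( u_2, \dots, u_g \), i.e.\ precisely when those components are nonzero, which is the very thing to be proved; embedding dimension \( 2 \) only says that \emph{some} smooth surface contains the germ, not that this particular coordinate projection is an embedding, and your escape clause ``after a quasi-homogeneous change of the \( u \)-coordinates'' would in general destroy the normal form you need downstream. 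The missing ingredient is the paper's Jacobian computation: the embedding dimension of the fiber equals \( g + 1 - \operatorname{rk} J\boldsymbol{H}(\boldsymbol{0}) \), and since every monomial of \( H_i \) other than the linear term \( v_{i+1} u_{i+1} \) has (non-weighted) order at least \( 2 \), this rank is exactly the number of nonzero parameters \( v_{i+1} \); because embedding dimension is an analytic invariant and a plane branch has embedding dimension \( 2 \), the fiber isomorphic to \( (C, \boldsymbol{0}) \) must have \emph{all} these parameters nonzero, and only then does your elimination apply to it. Once this lemma is inserted your argument closes; your secondary worry about whether the \( \lambda_i \) can be simultaneously normalized to \( 1 \) is immaterial to the statement, which claims the family \( \{f_{\boldsymbol{t}, \boldsymbol{\lambda}}\}_{\boldsymbol{\lambda}} \) covers all analytic types with semigroup \( \Gamma \), not that the parameters \( \boldsymbol{\lambda} \) are irredundant.
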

\begin{proof}
Consider the semigroup constant deformation of the monomial curve \( (C^\Gamma, \boldsymbol{0}) \) from \fref{thm:teissier}. Since the semigroup \( \Gamma \) is of a plane branch, we can assume that the set of vectors \( (u_2, 0, \dots, 0), (0, u_3, 0, \dots, 0), \dots, (0, 0, \dots, u_g, 0) \) are part of the semigroup constant deformation. \( X_\Gamma \) will have equations
\[ C : H_i = h_i - v_{i+1} u_{i+1} + \sum_{r=g+1}^{\tau_-} v_r \phi_{r,i}(u_0, \dots, u_g) = 0,\quad 1 \leq i \leq g. \]
The embedding dimension of \( (C, \boldsymbol{0}) \) is equal to \( g + 1 - \text{rk}\, J\boldsymbol{H}( \boldsymbol{0} ) \), see \cite[4.3]{dejong-pfister00}. Since all the monomials in \( H_i \) have (non-weighted) degree bigger than 2, except for those in the vectors \( (u_2, 0, \dots, 0), (0, u_3, 0, \dots, 0), \dots, (0, 0, \dots, u_g, 0) \), the rank of the Jacobian is \( g - 1 \), if and only if, \( v_2 \cdots v_{g-1} \) is non-zero. Thus, the embedding dimension of \( (C, \boldsymbol{0} ) \) is 2, if and only if, all \(v_2, \dots, v_{g-1} \) are different from zero.

\vskip 2mm

Finally, performing elimination on the variables \( u_2, \dots, u_g \) one obtains a plane branch equation similar to \fref{eq:plane-branch-def} with \( \lambda_i = v_2^{n_2} \cdots v_i^{n_i} \neq 0 \) and a finite number of deformation monomials with coefficients that are polynomials in the variables \( v_r \). Therefore, there is an inclusion of the parameter space of \( X_\Gamma \) into the parameters of the family of deformations \( f_{\boldsymbol{t}, \boldsymbol{\lambda}} \).
\end{proof}

We will sometimes drop the dependency on the parameters \( \boldsymbol{\lambda} \in \mathbb{C}^{g-1} \), and denote just \( f_{\boldsymbol{t}}(x, y) \). Although we are considering a finite deformation of \( f_{\boldsymbol{t}} \), we can always assume that we have deformation terms of high enough order. Adding extra terms to the summation does not change the facts that the family contains all plane curves up to analytic isomorphism or that the deformation has constant semigroup.

\section{Resolution of plane curve singularities} \label{sec:resolution-plane-curves}

In this section, we will review some facts about resolution of singularities of plane curves. In the sequel, a plane curve will be a germ of a mapping \( f : (\mathbb{C}^2, \boldsymbol{0}) \longrightarrow (\mathbb{C}, 0) \) with \( f(\boldsymbol{0}) = 0 \). We will make a small abuse of notation and denote also by \( f \) an equation of the germ, assuming it is defined in a small neighborhood \( U \subseteq \mathbb{C}^2 \) of the origin. For irreducible plane curves \( f \), we will present the notions of toric resolution, maximal contact elements, and their relation. We will also see how some resolution data can be described in terms of the semigroup and the characteristic sequence of \( f \). In the last part of this section, we give equations for the resolution of the elements in the family \( f_{\boldsymbol{t}} \) considered in \fref{prop:monomial-curve-plane} at the so-called \emph{rupture divisors}. The majority of the results in this section are well-known and can be found in the books of Casas-Alvero \cite{casas} or Wall \cite{ctc-wall}.

\subsection{Resolution of singularities}

Let \(\pi : X' \longrightarrow U\) be \emph{the minimal embedded resolution} of the plane curve \( f \). Here, \(\pi\) is birational proper morphism and \( X' \) is a smooth surface. We can assume that \( \pi \) is given as a composition of point blow-ups
\[ \pi : X' := X_{s+1} \xrightarrow{\ \pi_{p_s} \ } X_r \longrightarrow \cdots \longrightarrow X_1 \xrightarrow{\ \pi_{p_0} \ } X_0 := U \subseteq \mathbb{C}^2, \]
with \( X_{i+1} := \textrm{Bl}_{p_i} X_i, p_i \in \textrm{Exc}(\pi_{p_{i-1}} \hspace{-4pt} \circ \cdots \circ \pi_{p_0}) \subset X_i\) for \( i = 1, \dots, s \) and \( p_0 := \boldsymbol{0} \). Denote by \( E := \textrm{Exc}({\pi}) \) the exceptional divisor of \( \pi \). Let \( K := \{p_0, p_1, \dots, p_s\} \) be the set of points that have been blown-up. Then, \( \{E_p\}_{p \in K} \) is the set of all the irreducible exceptional components of \( E \). The total transform divisor \( F_\pi \) and relative canonical divisor \( K_\pi \) will have the following expressions
\[ F_\pi := \textrm{Div}(\pi^* f) = \sum_{p \in K} N_p E_p + \mu_1 \widetilde{C}_1 + \cdots + \mu_t \widetilde{C}_t, \quad K_\pi = \textrm{Div}(\textrm{Jac}(\pi)) := \sum_{p \in K} k_p E_p, \]
where \( \widetilde{C}_1, \dots, \widetilde{C}_t \in \textrm{Div}_{\mathbb{Z}}(X') \) are the branches of the strict transform of \( f \), that is to say, \( \widetilde{C}_1 + \cdots + \widetilde{C}_t = \textrm{Div}(\overline{\pi^{-1}(f - \{\boldsymbol{0}\})}) \). The divisor \( F_\pi \) is a simple normal crossing divisor.

\vskip 2mm

We will distinguish between two types of exceptional divisors. An exceptional divisor is said to be of \emph{rupture} type if it intersects three or more divisors in the support of \( F_\pi \). It is said to be \emph{non-rupture} otherwise. Those exceptional divisors that only intersect one divisor in the support of \( F_\pi \) will be called \emph{dead-end} divisors. They are dead-end points in the dual graph of the resolution, hence the name.

\subsection{Toric resolutions and maximal contact elements} \label{sec:toric-resolution} We assume from now on that \( f : (\mathbb{C}^2, \boldsymbol{0}) \longrightarrow (\mathbb{C}, 0) \) is irreducible with semigroup \( \Gamma = \langle \overline{\beta}_0, \overline{\beta}_1, \dots, \overline{\beta}_g \rangle \) and we use the notations from \fref{sec:semigroup}. All plane branches having the same semigroup \( \Gamma \), or characteristic sequence, are \emph{equisingular}, see \cite[3.8]{casas}. This means that the combinatorics of the resolution is the same for all of them and can be determined by the characteristic sequence.

\vskip 2mm

A classical way to obtain the minimal resolution by point blow-ups of an irreducible plane curve \( f \) from its characteristic sequence \( (n, \beta_1, \dots, \beta_g) \) is using Enriques' theorem, see \cite[5.5]{casas}. We will however take the approach of Oka in \cite{oka96} and describe the minimal resolution of \( f \) as a composition of toric morphisms. Indeed, there exists a resolution map \( \pi \) of \( f \) that decomposes into \( g \geq 1 \) toric morphisms. For \( i = 1, \dots, g \),
\[ \pi^{(i)} := \pi_1 \circ \cdots \circ \pi_{i-1} \circ \pi_i : X^{(i)} \xrightarrow{\ \pi_i \ } X^{(i-1)} \xrightarrow{\pi_{i-1}} \cdots \xrightarrow{\ \pi_{2}\ } X^{(1)} \xrightarrow{\ \pi_1\ } X^{(0)} := U \subseteq \mathbb{C}^2, \]
where \( \pi_{i} \) is a toric morphism for a suitable choice of coordinates on \( X^{(i-1)} \) and \( \pi := \pi^{(g)} \). Each \( \pi_i \) resolves one characteristic exponent of the plane branch \( f \) in the sense that the strict transform of \( f \) on \( X^{(i)} \) has one characteristic exponent less than the strict transform on \( X^{(i-1)} \). In this way, \( X^{(i)} \) always contain one extra rupture divisor \( E_{p_i} \) than \( X^{(i-1)} \). We will denote by \( U_i, V_i \) the affine open sets, and by \( (x_i, y_i), (z_i, w_i) \) the coordinates, containing the \( i \)--th rupture divisor \( E_{p_i} \) on \( X^{(i)} \) after the \( i \)--th toric modification \( \pi_i \). In these coordinates, recalling the definitions of the integers \( n_i, q_i \) from \fref{sec:semigroup}, the toric morphism is given by
\begin{equation} \label{eq:equations-toric-resolution}
\pi_{i}(x_i, y_i) = \left(x_i^{n_i} y_i^{a_i}, x_i^{q_i} y_i^{b_i} \right) \quad \textrm{and} \quad \pi_i(z_i, w_i) = (z_i^{c_i} w_i^{n_i}, z_i^{d_i} w_i^{q_i}),
\end{equation}
with \( a_i, b_i, c_i, d_i \in \mathbb{Z}_{\geq 0} \) such that \( n_i b_i - q_i a_i = 1, q_i c_i - n_i d_i = 1 \) and \( a_i q_i + d_i n_i = n_i q_i - 1 \). These toric morphisms can be thought as a composition of point blow-ups. In the sequel, we will associate to each plane branch singularity these series of four integers \( a_i, b_i, c_i, d_i \) for every \( i = 1, \dots, g \). They are determined, although not explicitly, by the semigroup \( \Gamma \) of \( f \) since they depend on the continuous fraction expansion of \( q_i/n_i \).

\vskip 2mm

If the singularity is still not resolved at \( X^{(i)} \), one needs to perform an analytic change of coordinates around the unique singular point of the strict transform of \( f \) on \( X^{(i)} \) in order for \( \pi_{i+1} \) to be toric. These new coordinates, let us say \( (\bar{x}_i, \bar{y}_i) \) and \( (\bar{z}_i, \bar{w}_i) \), are such that \( \pi^{(i)}_* \bar{y}_i = \pi^{(i)}_* \bar{z}_i  \) is a germ \( f_i : (\mathbb{C}^2, \boldsymbol{0}) \longrightarrow (\mathbb{C}, 0) \) that is a \emph{maximal contact element} of \( f \) in the sense that the intersection of \( f_i \) and \( f \) is, precisely, \( \overline{\beta}_i \). By construction, each of these maximal contact elements \( f_i \) are resolved by the corresponding \( \pi^{(i)} \). In the case of the plane curves constructed in \fref{sec:monomial-curve}, the maximal contact element \( f_i \) coincide with the elements \( f_i \) defined in \fref{eq:monomial-curve-plane} and \fref{eq:plane-branch-def}. For completeness, we will assume that the \( (g+1) \)--th maximal contact element \( f_{g+1} \) is the curve \( f \) itself.

\vskip 2mm

It is easy to see that the semigroup \( \Gamma_{i+1} \) of the maximal contact element \( f_{i+1} \) is
\begin{equation} \label{eq:semigroup-max-contact}
\Gamma_{i+1} = \langle n_1 n_2 \cdots n_{i}, n_2 \cdots n_{i} \overline{m}_{1}, \dots, n_{i} \overline{m}_{i-1} ,\overbar{m}_{i} \rangle, \quad \textrm{for} \quad i = 1, \dots, g.
\end{equation}
Similarly, its characteristic sequence is given by \((n_1 n_2 \cdots n_{i}, n_2 \cdots n_{i} m_1, \dots, n_{i} m_{i-1}, m_{i})\). Apart from this numerical data of the maximal contact elements, we are interested in describing the multiplicities of the total transform and the relative canonical divisor along the rupture and dead-end divisors in terms of the semigroup \( \Gamma \). Following the same notation as above, denote \( E_{p_i}, i = 1, \dots, g \) the rupture divisors of \( f \). Similarly, denote \( E_{q_i} \) for \( i = 0, \dots, g \) the dead-end divisors. It is well-known, see \cite[8.5]{ctc-wall}, that
\begin{equation} \label{eq:numeric-data-semigroup}
N_{p_i} = n_i \overline{\beta}_i, \quad k_{p_i} + 1 = m_i + n_1 \cdots n_i, \quad N_{q_i} = \overline{\beta}_i, \quad k_{q_i} + 1 = \lceil (m_i + n_1 \cdots n_i)/n_i \rceil .
\end{equation}

We will end this section with a technical result about the resolution of the elements of the constant semigroup deformations \( \{ f_{\boldsymbol{t}, \boldsymbol{\lambda}}\}_{\boldsymbol{\lambda} \in \mathbb{C}^{g-1}} \). Having constant semigroup means that all the fibers of all the elements of the family are equisingular. Hence, the toric resolution of the plane branches in the family \( f_{\boldsymbol{t}} \) is the same modulo the coordinates needed at each \( X^{(i)} \). The following proposition describes locally the equations of \( f_{\boldsymbol{t}} \) around the rupture divisors after pulling back by \( \pi^{(i)} \).

\begin{proposition} \label{prop:equations-curve}
Let \( E_{p_i} \) be the \( i \)--th rupture divisor on the surface \( X^{(i)} \) and let \( U_i, V_i \) be the corresponding charts containing \( E_{p_i} \) with local coordinates \( (x_i, y_i) \) and \( (z_i, w_i) \), respectively. Then,
\begin{itemize}[leftmargin=*]
\item The equations of the total transform of \( f_{\boldsymbol{t}}(x, y) \) are given by
\begin{equation} \label{eq:prop-equations-curve-1}
x_i^{n_i \overline{\beta}_i} y_i^{a_i \overline{\beta}_i} u_1(x_i, y_i)\tilde{f}_{\boldsymbol{t}}(x_i, y_i), \qquad z_i^{(c_i n_{i-1} \overline{m}_{i-1} + d_i)e_{i-1}} w_i^{n_i \overline{\beta}_i} u_2(x_i, y_i) \tilde{f}_{\boldsymbol{t}}(z_i, w_i),
\end{equation}
where \( u_1, u_2 \) are units at any point of \( E_{p_i} \).
\vskip 2mm
\item The equations \( \tilde{f}_{\boldsymbol{t}} \) of the strict transform of \( f \) are
\[
\tilde{f}_{\boldsymbol{t}}(x_i, y_i) = \tilde{f}_{g+1}(x_i, y_i; \underline{t}^{(1)}, \dots, \underline{t}^{(g)}), \quad \tilde{f}_{\boldsymbol{t}}(z_i, w_i) = \tilde{f}_{g+1}(z_i, w_i; \underline{t}^{(1)}, \dots, \underline{t}^{(g)}).
\]
\item The \( (i+1) \)--th maximal contact element has equations
\begin{equation} \label{eq:prop-equations-curve-2}
\begin{split}
\tilde{f}_{i+1}(x_i, y_i) & = y_i - \lambda_i + \hspace{-10mm} \sum_{\overline{\beta}_{0} k_0 + \cdots + \overline{\beta}_{i} k_{i} > n_{i} \overline{\beta}_{i} } \hspace{-10mm} t_{\underline{k}}^{(i)} x_i^{\rho_{i+1}^{(i)}(\underline{k})} y_i^{A^{(i)}_{i+1}(\underline{k})} u^{(i)}_{\underline{k}}(x_i, y_i), \qquad (U_i \ \textrm{chart}\,), \\
\tilde{f}_{i+1}(z_i, w_i) & = 1 - \lambda_i z_i + \hspace{-10mm} \sum_{\overline{\beta}_{0} k_0 + \cdots + \overline{\beta}_{i} k_{i} > n_{i} \overline{\beta}_{i} } \hspace{-10mm} t_{\underline{k}}^{(i)} z_i^{C^{(i)}_{i+1}(\underline{k})} w_i^{\rho_{i+1}^{(i)}(\underline{k})} u^{(i)}_{\underline{k}}(z_i, w_i), \qquad (V_i \ \textrm{chart}\,),
\end{split}
\end{equation}
where \( u_{\underline{k}}^{(i)} \) are units at any point of \( E_{p_i} \).
\vskip 2mm
\item The remaining maximal contact elements \( f_{j+1}, j > i \) have strict transforms given by
\begin{equation} \label{eq:prop-equations-curve-3}
\tilde{f}_{j+1} = \tilde{f}_j^{n_j} - \lambda_j x_i^{\rho_{j+1}^{(i)}(\underline{l}_j)} y_i^{A_{j+1}^{(i)}(\underline{l}_j)} u_{\underline{0}}^{(j)} \tilde{f}_{i+1}^{l_{i+1}^{(j)}} \cdots \tilde{f}_{j-1}^{l_{j-1}^{(j)}} + \hspace{-11mm} \sum_{\overline{\beta}_{0} k_0 + \cdots + \overline{\beta}_{j} k_{j} > n_{j} \overline{\beta}_{j} } \hspace{-10mm} t_{\underline{k}}^{(j)} x_i^{\rho_{j+1}^{(i)}(\underline{k})} y_i^{A_{j+1}^{(i)}(\underline{k})} u^{(j)}_{\underline{k}} \tilde{f}_{i+1}^{k_{i+1}} \cdots \tilde{f}_j^{k_j}
\end{equation}
in the \( U_i \) chart, and similarly in \( V_i \). As before, \( u_{\underline{0}}^{(j)}, u_{\underline{k}}^{(j)} \) are units everywhere on \( E_{p_i} \) and we denote \( \underline{l}_j := \big(l_0^{(j)}, l_1^{(j)}, \dots, l_{j-1}^{(j)}, 0\big) \) the integers from \fref{prop:teissier-appendix2}.
\end{itemize}

\vskip 2mm

\item Finally, \( \rho_{j+1}^{(i)}, A_{j+1}^{(i)}, C_{j+1}^{(i)}, j \geq i \), are the following linear forms:
\begin{equation} \label{eq:prop-equations-curve-4}
\begin{split}
\rho_{j+1}^{(i)}(\underline{k}) & = \sum_{l = 0}^i n_{l+1} \cdots n_i \overline{m}_l k_l  + n_i \overline{m}_i \sum_{l=i+1}^j n_{i+1} \cdots n_{l-1} k_l - n_i \cdots n_j \overline{m}_i, \\
A_{j+1}^{(i)}(\underline{k}) & = a_i \sum_{l = 0}^{i-1} n_{l+1} \cdots n_{i-1} \overline{m}_l k_l + (a_i n_{i-1} \overline{m}_{i-1} + b_i)k_i \\
  & + a_i \overline{m}_i \sum_{l = i+1}^j n_{i+1} \cdots n_{l-1} k_l - a_i \overline{m}_i n_{i+1} \cdots n_j,\\
C_{j+1}^{(i)}(\underline{k}) & = c_i \sum_{l = 0}^{i-1} n_{l+1} \cdots n_{i-1} \overline{m}_l k_l + (c_i n_{i-1} \overline{m}_{i-1} + d_i)k_i \\
  & + n_i(c_i n_{i-1} \overline{m}_{i-1} + d_i) \sum_{l = i+1}^j n_{i+1} \cdots n_{l-1} k_l - (c_i n_{i-1} \overline{m}_{i-1} + d_i) n_i \cdots n_j.
\end{split}
\end{equation}
\end{proposition}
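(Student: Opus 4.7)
The plan is to proceed by induction on $i \in \{1, \dots, g\}$, with an inner induction on $j \geq i$ producing the formulas for $\tilde f_{j+1}$. The master tool is the recursion (3.8) defining $f_{j+1}$, combined with the semigroup identity $n_j \overline{\beta}_j = \sum_{l = 0}^{j-1} \overline{\beta}_l l_l^{(j)}$ from Proposition 3.4, which ensures that the two leading terms $f_j^{n_j}$ and $\lambda_j f_0^{l_0^{(j)}}\cdots f_{j-1}^{l_{j-1}^{(j)}}$ always acquire the same exceptional monomial factor after pullback by $\pi^{(i)}$, so that this common factor can be extracted to isolate the non-monomial remainder carrying the deformation parameters.

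For $i = 1$, the toric morphism $\pi_1$ pulls back $f_0 = x$ and $f_1 = y$ to monomials on $U_1$ and $V_1$ via (4.1). Substituting into (3.8), the common exceptional factor in the two leading terms is extracted at each level $\ell = 2, \dots, g+1$, producing the displayed expressions (4.3) and (4.5); the level $\ell = 2$ computation yields $\tilde f_2 = y_1 - \lambda_1 + (\text{higher order})$, which gives (4.2) and confirms that $\tilde f_2$ meets $E_{p_1} = \{x_1 = 0\}$ transversally at the point $(0, \lambda_1)$. For the inductive step $i-1 \to i$, the inductive hypothesis provides that $\tilde f_i$ on $X^{(i-1)}$ is a local coordinate transverse to $E_{p_{i-1}}$ at the unique singular point of the strict transform of $f_{\boldsymbol{t}}$; after the analytic change of variables making $\tilde f_i$ the transverse coordinate, $\pi_i$ again has the toric form (4.1). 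Pulling back the expressions provided by the inductive hypothesis and reapplying the recursion together with the semigroup identity at each level $j$, one factors the new common exceptional monomial and obtains the claimed expressions on $X^{(i)}$. The explicit linear forms $\rho_{j+1}^{(i)}$, $A_{j+1}^{(i)}$, $C_{j+1}^{(i)}$ in (4.6) are then computed by tracking the $x_i$- and $y_i$-valuations in $U_i$ (resp.\ the $z_i$- and $w_i$-valuations in $V_i$) of each monomial $f_0^{k_0} \cdots f_j^{k_j}$ after iterated substitution, using (3.4)--(3.5) for the $\overline{\beta}_l$ and $\overline{m}_l$ together with the unimodular identities $n_i b_i - q_i a_i = 1$ and $q_i c_i - n_i d_i = 1$ for the toric data.

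The main obstacle is the bookkeeping that verifies the precise shape of (4.6). The normalizing constants $-n_i \cdots n_j \overline{m}_i$ in $\rho_{j+1}^{(i)}$ (and the analogous constants in $A_{j+1}^{(i)}$ and $C_{j+1}^{(i)}$) correspond to the leading term $\underline k = \underline 0$ and must match the multiplicity of $E_{p_i}$ in $\tilde f_{j+1}$ computed via (4.9), which pins down how valuations accumulate as the recursion is pushed from level $j$ to $j+1$; the three linear forms then arise as the unique affine functions of $\underline k$ compatible with this leading-term normalization and with the substitutions (4.1). A secondary subtlety is checking that the remainder factors $u_1, u_2, u_{\underline k}^{(i)}, u_{\underline 0}^{(j)}$ are units everywhere on $E_{p_i}$: the only vanishing locus along $E_{p_i}$ of the non-exceptional part of $(\pi^{(i)})^{\ast} f_{\boldsymbol{t}}$ is the union of the strict transforms of the later maximal contact elements $f_{l+1}$, $l \geq i$, and these are precisely the factors $\tilde f_{i+1}, \dots, \tilde f_j$ that are displayed explicitly in (4.5), so the leftover units indeed have no zero on $E_{p_i}$.
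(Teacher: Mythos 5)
Your proposal is correct and takes essentially the same route as the paper: an inductive application of the toric modifications of \fref{eq:equations-toric-resolution} to the family \( f_{\boldsymbol{t}} \) of \fref{prop:monomial-curve-plane}, with the analytic coordinate change at each stage given by the strict transform of the next maximal contact element (\( \bar{y}_i = \tilde{f}_{i+1} \), \( \bar{x}_i = x_i u_i \)), and the linear forms obtained by recursive bookkeeping of the exceptional exponents via the unimodular relations among \( a_i, b_i, c_i, d_i \). The paper's own proof is the same sketch, merely recording the explicit recursions for \( \rho_{j+1}^{(i)}, A_{j+1}^{(i)}, C_{j+1}^{(i)} \) that your valuation-tracking argument would produce.
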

\begin{proof}
The results follow from the inductive procedure of applying the toric transformations from \fref{eq:equations-toric-resolution} to the equations of \( f_{\boldsymbol{t}} \) in \fref{prop:monomial-curve-plane}. At each \( X^{(i)} \) the analytic coordinates which make the morphism \( \pi_i \) toric are described by \( \bar{y}_i = \widetilde{f}_{i+1}, \bar{x}_i = x_i u_i \) in the \( U_i \) chart, for some unit \( u_i \). The expressions for the linear forms \( \rho_{j+1}^{(i)}, A_{j+1}^{(i)}, C_{j+1}^{(i)} \) follow, recursively, from the relations
\[
\begin{split}
\rho_{j+1}^{(i)}(\underline{k}) & = n_i \rho_{j+1}^{(i-1)}(\underline{k}) + q_i k_i + n_i q_i \sum_{l = i+1}^j n_{i+1} \cdots n_{l-1} k_l - n_i \cdots n_j q_i, \\
A_{j+1}^{(i)}(\underline{k}) & = a_i \rho_{j+1}^{(i-1)}(\underline{k}) + b_i k_i + a_i q_i \sum_{l = i+1}^j n_{i+1} \cdots n_{l-1} k_l - a_i q_i n_{i+1} \cdots n_j, \\
C_{j+1}^{(i)}(\underline{k}) & = c_i \rho_{j+1}^{(i-1)}(\underline{k}) + d_i k_i + n_i d_i \sum_{l = i+1}^j n_{i+1} \cdots n_{l-1} k_l - d_i n_i \cdots n_j.
\end{split}
\]\end{proof}
Following the notations from \fref{prop:equations-curve}, we will fix, in the sequel, the index \( i \) to denote that we have resolved the singularity up to the \( i \)--th rupture divisor on \( X^{(i)} \). On the other hand, the index \( j \) will make reference to the \( j \)--th maximal contact element. At any step of the resolution process, one has that \( 1 \leq i \leq j \leq g \).

\begin{lemma} \label{lemma:lemma-ac1}
Let \( \rho_{j+1}^{(i)}(\underline{k}), A_{j+1}^{(i)}(\underline{k}), C_{j+1}^{(i)}(\underline{k}) \) be the linear forms in \fref{prop:equations-curve}. Then,
\[ A_{j+1}^{(i)}(\underline{k}) + C_{j+1}^{(i)}(\underline{k}) + \sum_{l = i+1}^j n_{i+1} \cdots n_{l-1} k_l = \rho_{j+1}^{(i)}(\underline{k}) + n_{i+1} \cdots n_j. \]
\end{lemma}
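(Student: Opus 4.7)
The plan is to verify the identity by a direct coefficient comparison in the variables $k_0,\ldots,k_j$ (and the constant term), after first establishing two auxiliary relations between the integers $a_i,b_i,c_i,d_i,n_i,q_i$ that complement the three relations given in \fref{eq:equations-toric-resolution}.

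First, I would derive
\[ a_i+c_i=n_i,\qquad b_i+d_i=q_i. \]
Combining $a_i q_i+d_i n_i=n_i q_i-1$ with $q_i c_i-n_i d_i=1$ gives $q_i c_i-1=q_i(n_i-a_i)-1$, hence $c_i=n_i-a_i$; the second identity follows from $n_i b_i-q_i a_i=1$ together with $q_i c_i-n_i d_i=1$ and the relation just established. I will also use $\overline{m}_i=n_i n_{i-1}\overline{m}_{i-1}+q_i$, which is a reformulation of \fref{eq:reduced-generators-semigroup} via the definition $q_i=\overline{m}_i-n_i n_{i-1}\overline{m}_{i-1}$.

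Next, I would expand $A_{j+1}^{(i)}(\underline{k})+C_{j+1}^{(i)}(\underline{k})+\sum_{l=i+1}^{j}n_{i+1}\cdots n_{l-1}k_l$ from the formulas in \fref{eq:prop-equations-curve-4} and compare, coefficient by coefficient, with $\rho_{j+1}^{(i)}(\underline{k})+n_{i+1}\cdots n_j$. Four cases arise:
\begin{itemize}
\item For $0\le l<i$: the coefficient on the left is $(a_i+c_i)\,n_{l+1}\cdots n_{i-1}\overline{m}_l=n_{l+1}\cdots n_i\,\overline{m}_l$, which matches $\rho_{j+1}^{(i)}$.
\item For $l=i$: the coefficient on the left is $(a_i+c_i)n_{i-1}\overline{m}_{i-1}+(b_i+d_i)=n_i n_{i-1}\overline{m}_{i-1}+q_i=\overline{m}_i$, again matching.
\item For $i<l\le j$: the coefficient on the left equals $n_{i+1}\cdots n_{l-1}\bigl[a_i\overline{m}_i+n_i c_i n_{i-1}\overline{m}_{i-1}+n_i d_i+1\bigr]$. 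Using $a_i=n_i-c_i$ and $\overline{m}_i-n_i n_{i-1}\overline{m}_{i-1}=q_i$ this simplifies to $n_{i+1}\cdots n_{l-1}\bigl[n_i\overline{m}_i-(q_i c_i-n_i d_i)+1\bigr]$, and the bracket equals $n_i\overline{m}_i$ by the relation $q_i c_i-n_i d_i=1$.
\item Constant term: the same bracket computation yields $-n_{i+1}\cdots n_j(n_i\overline{m}_i-1)=-n_i\cdots n_j\,\overline{m}_i+n_{i+1}\cdots n_j$, which coincides with the constant term on the right.
\end{itemize}

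I do not expect any substantial obstacle; the proof is an elementary bookkeeping check. The only points that require care are keeping track of the empty-product conventions (for $l=i$ in the sums defining $\rho_{j+1}^{(i)}$ and for the boundary index in the auxiliary sum), and reducing everything through the two identities $a_i+c_i=n_i$, $b_i+d_i=q_i$ and the relation $\overline{m}_i=n_i n_{i-1}\overline{m}_{i-1}+q_i$ so that the three given numerical constraints on $a_i,b_i,c_i,d_i$ appear exactly once in the calculation.
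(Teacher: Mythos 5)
Your proof is correct and takes essentially the same route as the paper's: the paper likewise deduces \( a_i + c_i = n_i \) and \( b_i + d_i = q_i \) from the relations among \( a_i, b_i, c_i, d_i \) and then adds \( A_{j+1}^{(i)} \) and \( C_{j+1}^{(i)} \). Your coefficient-by-coefficient check (using \( \overline{m}_i = n_i n_{i-1}\overline{m}_{i-1} + q_i \) and \( q_i c_i - n_i d_i = 1 \) for the terms with \( l > i \) and the constant term) simply makes explicit the computation the paper leaves to the reader.
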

\begin{proof}
From the relations between \( a_i, b_i, c_i, d_i \), one can deduce that \( a_i + c_i = n_i, b_i + d_i = q_i \). The result follows from adding \( A_{j+1}^{(i)} \) and \( C_{j+1}^{(i)} \) and using these relations.
\end{proof}

\section{Poles and residues for plane curves} \label{sec:poles-plane-curves}

Let \( f : (\mathbb{C}^{2}, \boldsymbol{0}) \longrightarrow (\mathbb{C}, 0) \) be a plane curve not necessarily reduced or irreducible. After fixing local coordinates \( x, y \), and with a small abuse of notation, let \( f \in \mathbb{C}\{x, y\}\) be an equation of the germ, assuming it is defined in a neighborhood \( U \subseteq \mathbb{C}^2 \) of the origin. Following \fref{sec:regularization}, define the complex zeta function \( f^s \) of a local singularity as
\begin{equation} \label{eq:definition-local-sing}
\langle f^s, \varphi \rangle := \int_{U} |f(x, y)|^{2s} \varphi(z)\, dz, \quad \textrm{for} \quad \textrm{Re}(s) > 0,
\end{equation}
with \( \varphi \in C^\infty_c(U) \) and \( z := (x, y, \bar{x}, \bar{y}) \). The poles of \( \langle f^s, \varphi \rangle \) do not depend on the equation of the germ or the local coordinates \( x, y \). As discussed in \fref{sec:regularization}, \( f^s \) must be understood in the distributional sense. In this section, we will use the minimal resolution of singularities of \( f \) to study the structure of the residues of \( f^s \) at any candidate pole \( \sigma \). The residue will be expressed as an improper integral along the exceptional divisor associated to \( \sigma \). In order to do so, we first present the straightforward generalization of \fref{prop:regularization} to the two dimensional case and see how poles of order two might arise. Finally, we will use the residue formula to prove that most non-rupture divisors do not contribute to poles of the complex zeta function \( f^s \).

\subsection{Regularization of monomials in two variables} \label{sec:poles-order-two}

The result from \fref{prop:regularization} can be easily generalized to the two dimensional case, mimicking the proof in \cite{gelfand-shilov}, to see how poles of order two arise. Let \( \varphi(z_1, z_2) \in C_c^\infty(\mathbb{C}^2) \) which is, in fact, a function of \( z = (z_1, z_2, \bar{z}_1, \bar{z}_2) \), and consider
\begin{equation} \label{eq:regularization-2}
\langle z_1^{s_1} z_2^{s_2}, \varphi \rangle = \int_{\mathbb{C}^2} |z_1|^{2s_1} |z_2|^{2s_2} \varphi(z) dz,
\end{equation}
which is absolutely convergent for \( \textrm{Re}(s_1) > -1 \), \( \textrm{Re}(s_2) > -1 \) since \( \varphi \) has compact support.

\vskip 2mm

Let \( \Delta_0 = D_1 \times D_2  \) be the polydisc formed by the discs of radius one centered at the origin, i.e. \( D_1 = \{ |z_1| \leq 1 \} \) and \( D_2 = \{ |z_2| \leq 1\} \). We can decompose \( \mathbb{C}^2 \) as the disjoint union
\[ \Delta_0 \cup (D_1 \times \mathbb{C} \setminus D_2) \cup (\mathbb{C} \setminus D_1 \times D_2) \cup (\mathbb{C} \setminus D_1 \times \mathbb{C} \setminus D_2). \]
Using the notation \( z^k = z_1^{k_1} z_2^{k_2} \bar{z}_1^{k_3} \bar{z}_2^{k_4} \), the integral in \fref{eq:regularization-2} on the region \( \Delta_0 \) can be written as
\[ \int_{\Delta_0} |z_1|^{2 s_1} |z_2|^{2 s_2} \bigg(\varphi(z) - \sum_{|k| \leq m} \frac{\partial^{k} \varphi(\boldsymbol{0})}{\partial z^k} \frac{z^k}{k!} \bigg) dz - \sum_{|k| \leq m}
\frac{\partial^{k} \varphi}{\partial z^{k}}(\boldsymbol{0})\frac{4 \pi^2}{k!(s_1 + k_1 + 1)(s_2 + k_2 + 1)},\]
where in the second summation, we have \(k_1 = k_3, k_2 = k_4\). The left-hand integral is holomorphic on the regions \( \textrm{Re}(s_1) > -m-1, \textrm{Re}(s_2) > -m -1 \).

\vskip 2mm

With a small abuse of the notation, let \( z_1 = (z_1, \bar{z}_1), z_2 =(z_2, \bar{z}_2) \) and \( z_1^{k_1} = z_1^{k_{1,1}} \bar{z}_1^{k_{1,2}} \). On the region \( D_1 \times \mathbb{C} \setminus D_2 \), the integral in \fref{eq:regularization-2} is
\[ \int_{D_1 \times \mathbb{C} \setminus D_2} \hspace{-20pt} |z_1|^{2s_1} |z_2|^{2s_2} \bigg( \varphi(z) - \sum_{|k_1| \leq m} \frac{\partial^{k_1} \varphi}{\partial z_1^{k_1}}(0, z_2) \frac{z_1^{k_1}}{k_1!} \bigg) dz_1 dz_2 - 2 \pi i \sum_{|k_1| \leq m} \displaystyle \frac{\displaystyle \int_{|z_2|>1} \hspace{-2pt} |z_2|^{2s_2} \frac{\partial^{k_1} \varphi}{\partial z_1^{k_1}} (0, z_2) dz_2}{(k_{1,1}!)^2(s_1 + k_{1,1} + 1)}, \]
where in the second sum, \( k_{1,1} = k_{1,2} \). The left-hand integral is holomorphic in \( \textrm{Re}(s_1) > - m - 1 \). By symmetry, a similar expression holds true in the other region, \( \mathbb{C} \setminus D_1 \times D_2 \). On the last region, \( \mathbb{C} \setminus D_1 \times \mathbb{C}\setminus D_2 \), the integral in \fref{eq:regularization-2} is absolutely convergent for all \( s_1, s_2 \in \mathbb{C} \).

\vskip 2mm

From the regularization of \( z_1^{s_1} z_2^{s_2} \) constructed above we can see that the residue of \( z_1^{s_1} z_2^{s_2} \) at a simple pole \( s_1 = -k-1, k \in \mathbb{Z}_{>0} \), i.e. the coefficient of \( (s + k + 1)^{-1} \), is given by the following function of \( s_2 \)
\begin{equation} \label{eq:residue-2}
\Res_{s_1 = -k-1} \langle z_1^{s_1} z_2^{s_2}, \varphi \rangle = -\frac{2 \pi i}{(k!)^2} \int_{\mathbb{C}} |z_2|^{2s_2} \frac{\partial^{2k} \varphi}{\partial z_1^k \partial \bar{z}_1^k}(0, s_2)\, dz_2 d\bar{z}_2.
\end{equation}
The residue in \fref{eq:residue-2} being a function of \( s_2 \) implies that \( \Res_{s_1 = -k-1} z_1^{s_1}z_2^{s_2} \)  will have a simple pole, as a function of \( s_2 \), on the poles of order two of \( z_1^{s_1} z_2^{s_2} \). Conversely, if \fref{eq:residue-2} is zero for certain \( s_2 = \alpha \), the point \( (s_1, s_2) = (-k -1, \alpha) \) is neither a pole of order one nor a pole of order two of \( z_1^{s_1} z_2^{s_2} \).

\subsection{The residue at the poles} \label{sec:residues-poles}

Let \( E_{p}, p \in K \) be an irreducible exceptional divisor. We will denote by \( D_{1}, D_{2}, \dots, D_{r} \in \textrm{Div}_{\mathbb{Z}}(X') \) the other prime components (exceptional or not) of \( F_\pi \) crossing \( E_p \). By definition, dead-end divisors have only one divisor crossing them, which will be denoted \( D_1 \). On the other hand, rupture divisors have at least three divisors crossing them, i.e. \( r \geq 3 \). In any other case, \( r = 2 \). We will denote by \( N_1, N_2, \dots, N_r \) (resp. \( k_1, k_2, \dots, k_r \)) the coefficients of \( D_1, D_2, \dots, D_r \) in \( F_\pi \) (resp. in \( K_\pi \)). Since no confusion arises, we drop the explicit dependence on \( p \in K \).

\vskip 2mm

For each \( E_p, p \in K \) we consider two affine charts \( U_p, V_p \) containing \( E_p \) that arise after the blow-up of a neighborhood of \( p \) in any chart containing \( p \). The origin of the charts \( U_p, V_p \) are neighborhoods of opposite points in the projective line \( E_p \). Usually, these points are the intersection points of \( E_p \) with two other components of \( F_\pi \) which we will assume to be \( D_1 \) and \( D_2 \). For simplicity, if this is not the case, we will set \( N_1, k_1 \) (or \( N_2, k_2 \)) to be zero. The only case in which both \( N_1D_1 \) and \( N_2D_2 \) are zero is when the minimal resolution \( \pi \) consists of a single blow-up, that is, an homogeneous singularity.

\vskip 2mm

In order to define the complex zeta function \( f^s \) on \( X' \), we need to work locally with coordinates. Accordingly, let \( (x_p, y_p), (z_p, w_p) \) be the natural holomorphic coordinates of \( U_p, V_p \) centered at the origins of both charts which, by construction, are the origin and the infinity point on a \( \mathbb{P}^1_\mathbb{C} \), or vice versa. The coordinates \( (x_p, y_p), (z_p, w_p) \) are related at the intersection \( U_p \cap V_p \) by \(x_p = z_p^{\kappa_p}w_p, y_p z_p = 1, \) and where the integer value \( \kappa_p \in \mathbb{Z}_{> 0} \) has a very precise geometric meaning, namely, \( \kappa_p = - E_p \cdot E_p \).

\vskip 2mm

Following the discussion in \fref{sec:regularization}, each exceptional component \( E_p \) contributes with a sequence of candidate poles to the meromorphic continuation of \( f^s \). Indeed, with the notations above,
\[ \left\{ \sigma_{p, \nu} = -\frac{k_p + 1 + \nu}{N_p}\ \bigg|\ \nu \in \mathbb{Z}_{\geq 0} \right\}, \quad p \in K. \]
The set \( \mathcal{W}_K := \{U_p, V_p\}_{p \in K} \) forms a finite affine open cover \( X' \), which applied to the construction presented in \fref{sec:regularization}, \fref{eq:anal-cont0}, results in the following proposition. First, denote by \( \{\eta_{1, q}, \eta_{2, q}\}_{q \in K} \) a partition of unity subordinated to the open cover \( \mathcal{W}_K \).

\begin{proposition} \label{prop:integral-along-divisor}
Using the affine open cover \( \mathcal{W}_K \), the part of the complex zeta function \( f^s \) on \fref{eq:anal-cont0} involving the divisor \( E_p \) can be written as the sum of the integrals over just two affine charts \( U_p, V_p \in \mathcal{W}_K \) containing \( E_p \), namely
\begin{equation} \label{eq:integral-divisor}
\begin{split}
  & \int_{U_p} |x_p|^{2(N_p s + k_p)} |y_p|^{2(N_1s + k_1)} \Phi_1(x_p, y_p; s) \eta_1\, dx_p dy_p d\bar{x}_p d\bar{y}_p \quad + \\
  & \int_{V_p} |z_p|^{2(N_2s + k_2)} |w_p|^{{2(N_p s + k_p)}} \Phi_2(z_p, w_p; s) \eta_2\, dz_p dw_p d\bar{z}_p d\bar{w}_p,
\end{split}
\end{equation}
where \( \Phi_1(x_i, y_i; s), \Phi_2(z_i, w_i; s) \) are infinitely many times differentiable at neighborhoods of the points \( p_1 = E_{p} \cap D_1 \) and \( p_2 = E_{p} \cap D_2 \). More precisely,
\[ \Phi_1 := |u_1|^{2s} |v_1|^2 (\pi^* \varphi)\restr{U_p}, \quad \Phi_2 := |u_2|^{2s} |v_2|^2 (\pi^* \varphi)\restr{V_p}, \]
with the elements \( u_1, v_1 \) (resp. \( u_2, v_2 \)) being units in the local ring at the points \( p_1 \) (resp. \( p_2 \)). Finally, \(\eta_1\) and \( \eta_2 \) have compact support and \( \eta_1\restr{E_p} +\, \eta_2\restr{E_p} \equiv 1 \).
\end{proposition}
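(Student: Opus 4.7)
The plan is to apply the resolution formula \fref{eq:anal-cont0} with the specific affine open cover $\mathcal{W}_K = \{U_q, V_q\}_{q \in K}$ of a neighborhood of the exceptional locus, together with the partition of unity $\{\eta_{1,q}, \eta_{2,q}\}_{q \in K}$ subordinate to it. The first observation is that the only two charts of $\mathcal{W}_K$ meeting $E_p$ are $U_p$ and $V_p$ themselves: this is built into the construction of $\mathcal{W}_K$, since each $E_q$ is created precisely when the pair $U_q, V_q$ is introduced, and blow-ups at other points $q \neq p$ only create charts centered at points of $E_q \neq E_p$. Therefore, after applying \fref{eq:anal-cont0}, all the summands whose support meets $E_p$ are the two indexed by $U_p$ and $V_p$.

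Next, I would compute the local form of the integrand on those two charts. Since $\pi$ is an embedded resolution, $F_\pi$ is a simple normal crossing divisor, and the only components of $F_\pi$ passing through the origin $p_1 = E_p \cap D_1 \in U_p$ are $E_p = \{x_p = 0\}$ and $D_1 = \{y_p = 0\}$. Consequently, in the local ring at $p_1$ one has
\begin{equation*}
\pi^* f = x_p^{N_p} y_p^{N_1} u_1(x_p, y_p), \qquad \textrm{Jac}(\pi) = x_p^{k_p} y_p^{k_1} v_1(x_p, y_p),
\end{equation*}
with $u_1, v_1$ units at $p_1$, and analogously at $p_2 = E_p \cap D_2 \in V_p$ in coordinates $(z_p, w_p)$ with $D_2 = \{z_p = 0\}$ and $E_p = \{w_p = 0\}$, yielding exponents $(N_2, k_2)$ and $(N_p, k_p)$ in the two directions. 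The degenerate cases (dead-end divisor, or $\pi$ a single blow-up) are covered by the convention $N_1 = k_1 = 0$ or $N_2 = k_2 = 0$ set above the statement.

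Substituting these monomial expressions into the $U_p$ and $V_p$ summands of \fref{eq:anal-cont0}, setting $\eta_1 := \eta_{1, p}$ and $\eta_2 := \eta_{2, p}$, and bundling the unit factors together with $\pi^* \varphi$ into
\begin{equation*}
\Phi_1 := |u_1|^{2s} |v_1|^2 (\pi^* \varphi)\restr{U_p}, \qquad \Phi_2 := |u_2|^{2s} |v_2|^2 (\pi^* \varphi)\restr{V_p},
\end{equation*}
yields \fref{eq:integral-divisor} directly. The normalization $\eta_1\restr{E_p} + \eta_2\restr{E_p} \equiv 1$ then follows from $\sum_q (\eta_{1,q} + \eta_{2,q}) \equiv 1$ on $X'$ together with the observation that all other terms of the partition vanish identically along $E_p$.

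The argument is essentially bookkeeping once the cover and the local coordinates are in place. The only subtle point is that $\Phi_1$ will in general fail to be $C^\infty$ at the other intersection points of $E_p$ with the components $D_3, \dots, D_r$ (in the rupture case $r \geq 3$), where additional monomial factors coming from those $D_j$ enter; however, the statement only claims $C^\infty$ regularity of $\Phi_1, \Phi_2$ in neighborhoods of $p_1$ and $p_2$, which is precisely what is needed to extract the residues of $f^s$ at the candidate poles $\sigma_{p, \nu} = -(k_p + 1 + \nu)/N_p$ associated to $E_p$ in the subsequent analysis.
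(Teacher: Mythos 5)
Your local computations are fine: the simple normal crossing structure at \( p_1, p_2 \) does give \( \pi^* f = x_p^{N_p} y_p^{N_1} u_1 \), \( \mathrm{Jac}(\pi) = x_p^{k_p} y_p^{k_1} v_1 \) with \( u_1, v_1 \) units, and bundling these with \( \pi^*\varphi \) into \( \Phi_1, \Phi_2 \) is exactly what the statement asks for. The gap is in the reduction step. Your argument rests on the claim that \( U_p \) and \( V_p \) are the \emph{only} charts of \( \mathcal{W}_K \) meeting \( E_p \), justified by saying that blow-ups at \( q \neq p \) only create charts ``centered at points of \( E_q \)''. This is false in general: whenever a later centre \( q \) lies on \( E_p \) (which happens for every divisor except the last ones created --- already for \( E_{p_0} \) in the resolution of the cusp \( y^2 - x^3 \)), the charts \( U_q, V_q \) contain a whole neighbourhood of \( E_q \cap E_p \) and hence a piece of (the strict transform of) \( E_p \); in fact the very point \( p_1 = E_p \cap D_1 \) at which you compute typically lies only in charts created \emph{after} \( E_p \), not in the pair coming from the blow-up of \( p \). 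Consequently the part of \fref{eq:anal-cont0} involving \( E_p \) is a priori a sum over several charts, not two, and your final assertion that ``all other terms of the partition vanish identically along \( E_p \)'' --- which you need in order to get \( \eta_1\restr{E_p} + \eta_2\restr{E_p} \equiv 1 \) with the choice \( \eta_1 := \eta_{1,p} \), \( \eta_2 := \eta_{2,p} \) --- fails for the same reason.

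This missing reduction is precisely the content of the paper's proof. There one collects \( \mathcal{W}_{p_1} \) and \( \mathcal{W}_{p_2} \), the subfamilies of all charts of \( \mathcal{W}_K \) containing \( p_1 \), respectively \( p_2 \) (these are disjoint and together contain every chart meeting \( E_p \)); one observes that the union and the intersection of the charts in each subfamily differ by finitely many lines, hence by a set of measure zero; one uses that \( |\pi^* f|^{2s} (\pi^*\varphi) |d\pi|^2 \) is a global section on \( X' \) to merge the corresponding integrals into a single integral over one chart of each subfamily; and one defines \( \eta_1 := \sum_{U_q \in \mathcal{W}_{p_1}} \eta_{1,q} \) and analogously \( \eta_2 \), from which the normalisation along \( E_p \) and the compact support follow. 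Without an argument of this kind (or a proof that your cover can be shrunk so that only two charts meet \( E_p \), which is not how \( \mathcal{W}_K \) is constructed), your proposal does not establish \fref{eq:integral-divisor}.
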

\begin{proof}
Let us denote by \( \mathcal{W}_{p_1}, \mathcal{W}_{p_2} \) all the elements in \( \mathcal{W}_K \) that contain \( p_1 \) and \( p_2 \). By construction, \( \mathcal{W}_{p_1} \) is disjoint from \( \mathcal{W}_{p_2} \), since there can be no affine open set containing both \( p_1 \) and \( p_2 \), and the union of \( \mathcal{W}_{p_1} \) and \( \mathcal{W}_{p_2} \) contains all the charts from \( \mathcal{W}_K \) containing \( E_p \). Applying \fref{eq:anal-cont0} from \fref{sec:regularization}, the part of \( f^s \) on \( X' \) where the divisor \( E_p \) appears is a sum of the integrals over the affine open sets from \( \mathcal{W}_{p_1} \) and \( \mathcal{W}_{p_2} \),
\begin{equation} \label{eq:proof-zeta-function-piece}
\sum_{U_{q} \in \mathcal{W}_{p_1}} \hspace{-2pt} \int_{U_{q}} \hspace{-2pt} |\pi^* f|^{2s}\restr{U_q} (\pi^* \varphi)\restr{U_q} \left|d \pi\restr{U_q}\hspace{-1pt}\right|^2 \eta_{1, q} + \hspace{-2pt} \sum_{V_{q} \in \mathcal{W}_{p_2}} \hspace{-2pt} \int_{V_{q}} \hspace{-2pt} |\pi^* f|^{2s}\restr{V_q} (\pi^* \varphi)\restr{V_q} \left|d \pi\restr{V_q}\hspace{-1pt}\right|^2 \eta_{2, q}.
\end{equation}

\vskip 2mm

Let us see that we can reduce \fref{eq:proof-zeta-function-piece} to \fref{eq:integral-divisor}. The proof is the same for both summations in \fref{eq:proof-zeta-function-piece}. Since the elements of \( \mathcal{W}_{p_1} \) are blow-up charts, the difference between the union and the intersection of all the elements in \( \mathcal{W}_{p_1} \) is a finite number of lines. Given that a finite number of lines have measure zero, they do not affect the integral, and we can replace the left-hand summation of \fref{eq:proof-zeta-function-piece} by
\begin{equation*}
\int_{\cap U_q} |\pi^* f|^{2s}\restr{\cap U_q} (\pi^* \varphi)\restr{\cap U_q} |d \pi\restr{\cap U_q}\hspace{-2pt}|^2 \eta_{1}
\end{equation*}
This equality is true since \( |\pi^* f|^{2s} \pi^*\varphi |d \pi|^2 \) is a global section on \( X' \) and it coincides at the intersection of all the \( U_q \in \mathcal{W}_{p_1} \). Concerning the partitions of unit, we just set \( \eta_1 := \sum_{U_q \in \mathcal{W}_{p_1}} \eta_{1, q} \). Finally, by the same argument as before, we can replace \( \cap_{U_q \in \mathcal{W}_{p_1}} U_q \) by any \( U_p \in \mathcal{W}_{p_1} \) yielding \fref{eq:integral-divisor}. Notice that, by definition of \( \mathcal{W}_{p_1} \), no other \(\eta \in \{\eta_{1, q}, \eta_{2, q} \}_{q \in K} \), except for those in \( \eta_1 \), has \( p_1 \) in its support.
\end{proof}

\vskip 2mm

Before presenting the formula for the residue, let us introduce the following rational numbers associated to a candidate pole \( \sigma_{p, \nu} \) of an irreducible exceptional divisor \( E_p, p \in K \). They will play an important role in the analysis of the residues.
\begin{definition}[Residue numbers]
Let \( \sigma_{p, \nu}, \nu \in \mathbb{Z}_{\geq 0} \) be a candidate pole of \( f^s \) associated to an exceptional divisor \( E_p, p \in K \) intersecting the divisors \( D_1, D_2, \dots, D_r \in \textrm{Div}_{\mathbb{Z}}(X') \). Define the \emph{residue numbers} as
\begin{equation} \label{eq:definition-epsilons}
\epsilon_{i, \nu} := N_i \sigma_{p, \nu} + k_i \in \mathbb{Q} \quad \textrm{for} \quad i = 1, \dots, r.
\end{equation}
\end{definition}
For the ease of notation, we will omit the dependence of \( \epsilon_{1, \nu}, \epsilon_{2, \nu} \) on \( p \in K \). The following relations between \( \epsilon_{1, \nu}, \epsilon_{2, \nu}, \dots, \epsilon_{r, \nu} \) holds.
\begin{lemma} \label{lemma:relation-epsilon}
For any \( \nu \in \mathbb{Z}_{\geq 0} \), we have
\begin{equation} \label{eq:relation-epsilon}
\epsilon_{1, \nu} + \epsilon_{2, \nu} + \cdots + \epsilon_{r, \nu} + \kappa_p \nu + 2 = 0.
\end{equation}
\end{lemma}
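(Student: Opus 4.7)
The plan is to derive the identity as a consequence of two intersection-theoretic identities on $X'$: one involving the total transform $F_\pi$ and the exceptional component $E_p$, and one involving the relative canonical divisor $K_\pi$ and $E_p$. Both identities follow from the fact that $\pi$ contracts $E_p$ to a point, together with adjunction on $E_p \cong \mathbb{P}^1$.

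First I would compute $F_\pi \cdot E_p$. Since $F_\pi = \pi^\ast \mathrm{div}(f)$ and $\pi_\ast E_p = 0$, the projection formula gives $F_\pi \cdot E_p = 0$. Writing $F_\pi = N_p E_p + \sum_{i=1}^r N_i D_i + (\text{components disjoint from } E_p)$, where the $D_i$ are exactly the prime components of $F_\pi$ crossing $E_p$ (each transversally, since $F_\pi$ is SNC), and using $E_p \cdot E_p = -\kappa_p$ and $D_i \cdot E_p = 1$, this yields
\begin{equation*}
N_1 + N_2 + \cdots + N_r = N_p \kappa_p.
\end{equation*}

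Next I would compute $K_\pi \cdot E_p$. Since the base $U \subset \mathbb{C}^2$ has trivial canonical class, $K_\pi$ represents $K_{X'}$, and again by the projection formula $\pi^\ast K_U \cdot E_p = 0$. Adjunction on $E_p \cong \mathbb{P}^1$ gives $(K_{X'} + E_p) \cdot E_p = -2$, hence $K_\pi \cdot E_p = K_{X'} \cdot E_p = -2 + \kappa_p$. On the other hand, expanding $K_\pi = k_p E_p + \sum_{i=1}^r k_i D_i + (\text{components disjoint from } E_p)$ (with the convention $k_i = 0$ when $D_i$ is a strict transform, since $K_\pi$ is exceptionally supported) yields
\begin{equation*}
k_1 + k_2 + \cdots + k_r = (k_p + 1)\kappa_p - 2.
\end{equation*}

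Finally, I would plug these two sums into
\begin{equation*}
\sum_{i=1}^r \epsilon_{i,\nu} \;=\; \sigma_{p,\nu} \sum_{i=1}^r N_i + \sum_{i=1}^r k_i \;=\; \sigma_{p,\nu}\, N_p \kappa_p + (k_p+1)\kappa_p - 2,
\end{equation*}
and use $N_p \sigma_{p,\nu} = -(k_p + 1 + \nu)$ to conclude $\sum_{i=1}^r \epsilon_{i,\nu} = -\nu \kappa_p - 2$, which rearranges to the claimed identity. There is no real obstacle here: the only subtle point is keeping track of the convention that $k_i = 0$ for strict transform components (so that the formula uniformly covers rupture, non-rupture, and dead-end cases, as well as the degenerate case of a single blow-up where $N_1 D_1$ or $N_2 D_2$ may be absent), but this is precisely what allows the same intersection-theoretic argument to apply in all cases.
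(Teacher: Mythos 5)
Your proof is correct and is essentially the paper's argument: the paper computes the intersection of the $\mathbb{Q}$-divisor $\sigma_{p,\nu}F_\pi + K_\pi$ with $E_p$ in one step via adjunction (using $F_\pi\cdot E_p=0$ since $F_\pi$ is a pullback), whereas you simply split that computation into the two identities $\sum_i N_i = N_p\kappa_p$ and $\sum_i k_i = (k_p+1)\kappa_p - 2$ before substituting $N_p\sigma_{p,\nu}+k_p=-\nu-1$. The bookkeeping convention you flag ($k_i=0$ on strict transform components, and absent $D_1$ or $D_2$ treated as zero) is exactly the one the paper sets up before the lemma, so nothing further is needed.
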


\begin{proof}
Consider the \( \mathbb{Q} \)--divisor \( \sigma_{p, \nu} F_\pi + K_\pi \). Applying the adjunction formula for surfaces \cite[\S V.1]{hartshorne}, \( (\sigma_{p, \nu} F_\pi + K_\pi) \cdot E_p = \kappa_p - 2 \), recall \( \kappa_p = - E_p \cdot E_p \). On the other hand, \[ (\sigma_{p, \nu} F_\pi + K_\pi) \cdot E_p = \sum_{i = 0}^r \epsilon_{i, \nu} - \kappa_p (N_p \sigma_{p, v} + k_p). \] Since \( N_p \sigma_{p, \nu} + k_p = - \nu - 1 \), the result follows.
\end{proof}

A first instance of the numbers \( \epsilon_{i, \nu} \) and of \fref{eq:relation-epsilon}, in the case of rupture divisors of irreducible plane curves and \( \nu = 0 \), already appeared in an article of Lichtin \cite{lichtin85}.

\vskip 2mm

The formula for the residue at a candidate pole \( \sigma_{p, \nu} \) is presented next. The residue is expressed as an improper integral, see \fref{rmk:remark-integral} below, along the divisor \( E_p \) having singularities of orders \( \epsilon_{1, \nu}, \epsilon_{2, \nu}, \dots, \epsilon_{1, r} \) at the intersection points of \( E_p \) with \( D_1, D_2, \dots, D_r \).

\begin{proposition} \label{prop:residue}
The residue of the complex zeta function \( f^s \) at a candidate pole \( s = \sigma_{p, \nu} \) is given by
\begin{equation}
\begin{split}
\Res_{\ s = \sigma_{p, \nu}} \langle f^s, \varphi \rangle & = \frac{-2 \pi i}{(\nu!)^2} \int_{\mathbb{C}} |y_p|^{2\epsilon_{1, \nu}} \frac{\partial^{2\nu} \Phi_1}{\partial x_p^{\nu} \partial \bar{x}_p^\nu}(0, y_p; \sigma_{p, \nu})\, dy_p d\bar{y}_p \qquad \,\,\,\,(U_p \ \textrm{chart}\,) \\
                                              & = \frac{-2 \pi i}{(\nu!)^2} \int_{\mathbb{C}} |z_p|^{2\epsilon_{2, \nu}} \frac{\partial^{2\nu} \Phi_2}{\partial w_p^\nu \partial \bar{w}_p^\nu}(z_p, 0; \sigma_{p, \nu})\, dz_p d\bar{z}_p, \qquad (V_p \ \textrm{chart}\,).
\end{split}
\end{equation}
\end{proposition}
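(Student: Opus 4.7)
The plan is to combine \fref{prop:integral-along-divisor}, which localizes the contribution of $E_p$ to $\langle f^s, \varphi\rangle$ into integrals supported in the two charts $U_p$ and $V_p$, with the two-variable Gel'fand--Shilov regularization from \fref{sec:poles-order-two}. In the $U_p$ chart the relevant integrand is
\[ |x_p|^{2(N_p s + k_p)}\,|y_p|^{2(N_1 s + k_1)}\, \Phi_1(x_p, y_p; s)\, \eta_1, \]
and at $s = \sigma_{p, \nu}$ we have $N_p s + k_p = -\nu - 1$ while $N_1 s + k_1 = \epsilon_{1, \nu}$. Identifying $z_1 \leftrightarrow x_p$, $z_2 \leftrightarrow y_p$, $s_1 \leftrightarrow N_p s + k_p$, $s_2 \leftrightarrow N_1 s + k_1$, and taking $\Phi_1 \eta_1$ in the role of the compactly supported test function, the simple pole of the integrand in $s_1$ at $-\nu - 1$ corresponds to the pole in $s$ at $\sigma_{p, \nu}$, and formula \fref{eq:residue-2} applied with $k = \nu$ reads off the residue directly. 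Running the identical argument in the opposite chart $V_p$ produces the expression in $z_p$.

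Two bookkeeping points must be watched. First, the residue in $s$ differs from the residue in $s_1$ by the chain-rule factor $ds_1/ds = N_p$, which must be tracked carefully so that the constants in front of the integral match the statement. Second, the partition of unity $\{\eta_{1, q}, \eta_{2, q}\}$ should be chosen so that $\eta_1$ is constant in the $x_p$-direction in a tubular neighborhood of $E_p \cap U_p$ (concretely, by pulling back a partition of unity from $E_p$ along the transversal projection). With this choice the mixed derivative $\partial^{2\nu}/\partial x_p^\nu \partial \bar{x}_p^\nu$ evaluated at $x_p = 0$ acts only on $\Phi_1$, matching the formula as written, while the transversal cut-off effect of $\eta_1$ disappears on the hyperplane $\{x_p = 0\}$.

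The equality of the two expressions then reflects the fact that the residue is intrinsically an improper integral along $E_p \simeq \mathbb{P}^1_\mathbb{C}$, and the two formulas are simply its representations in the two standard affine charts, related by $y_p z_p = 1$ and $x_p = z_p^{\kappa_p} w_p$. Verifying compatibility of the two integrands under this coordinate change is governed by \fref{lemma:relation-epsilon}, whose relation between $\epsilon_{1, \nu}$, $\epsilon_{2, \nu}$ and $\kappa_p \nu$ is exactly the one needed so that $|y_p|^{2\epsilon_{1,\nu}} dy_p d\bar y_p$ transforms into $|z_p|^{2\epsilon_{2,\nu}} dz_p d\bar z_p$ modulo the $\nu$-th derivative normalization.

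I expect the analytic core to be essentially mechanical once \fref{prop:integral-along-divisor} and \fref{eq:residue-2} are in place. The main obstacle will be the treatment of the improper integral when one of the residue numbers $\epsilon_{i, \nu}$ is itself a negative integer: then $|y_p|^{2 \epsilon_{1, \nu}}$ becomes singular at one of the intersection points of $E_p$ with a neighboring divisor, signalling a pole of $f^s$ of order two at $\sigma_{p, \nu}$. This regime is precisely the one described in \fref{sec:poles-order-two}, where the residue-in-$s_1$ formula \fref{eq:residue-2} retains a meromorphic dependence on $s_2$; the meromorphic continuation in $s_2$ then consistently accounts for the higher-order contribution, and the residue formula above should be read in that regularized sense.
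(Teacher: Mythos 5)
Your plan is essentially the paper's own proof: localize the contribution of \( E_p \) via \fref{prop:integral-along-divisor}, apply the two-variable regularization formula \fref{eq:residue-2} with \( k = \nu \) in each chart, and then glue the two chart contributions into a single improper integral along \( E_p \) using \( y_p z_p = 1 \), the transformation of \( \Phi_2 \) into \( \Phi_1 \), and \fref{lemma:relation-epsilon}, the cut-offs disappearing because \( \eta_1\restr{E_p} + \eta_2\restr{E_p} \equiv 1 \). One minor remark: the chain-rule factor you flag, if tracked, yields an overall \( 1/N_p \) that the stated constant (and the paper's proof) silently omits — a harmless normalization, since all later applications only use whether the residue vanishes.
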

\begin{proof}
Applying \fref{eq:residue-2} to \fref{prop:integral-along-divisor} with \( s_2 = N_1 \sigma_{p, \nu} + k_1 \) and \( N_2 \sigma_{p, \nu} + k_2\), respectively, we obtain that the residue of \( f^s \) at \( s = \sigma_{p, \nu} \) is
\begin{equation} \label{eq:prop-residue-1}
\begin{split}
\Res_{\ s = \sigma_{p, \nu}} \langle f^s, \varphi \rangle = \frac{-2 \pi i}{(\nu!)^2} \bigg( & \int_{\mathbb{C}} |y_p|^{2(N_1\sigma_{p, \nu} + k_1)} \frac{\partial^{2\nu} \Phi_1 \eta_1}{\partial x_p^\nu \partial \bar{x}_p^\nu}(0, y_p; \sigma_{p, \nu})\, dy_pd\bar{y}_p \\
                            + & \int_{\mathbb{C}} |z_p|^{2(N_2\sigma_{p, \nu} + k_2)} \frac{\partial^{2\nu} \Phi_2 \eta_2}{\partial w_p^\nu \partial \bar{w}_p^\nu}(z_p, 0; \sigma_{p, \nu})\, dz_p d\bar{z}_p \bigg).
\end{split}
\end{equation}
The global section \( |\pi^* f|^{2s} (\pi^* \varphi) |d \pi|^2 \) restricted to \( U_p, V_p \) contains \( \Phi_1, \Phi_2 \). The whole global section differs only from \( \Phi_1, \Phi_2 \) by the exceptional part in the total transform \( | \pi^* f |^{2s} \). Thus, at the intersection \( U_p \cap V_p \), having that \( y_p z_p = 1, w_p = x_p^{\kappa_p} y_p \), one checks that
\[
\begin{split}
\Phi_2(z_p, w_p; \sigma_{p, \nu}) & = \Phi_2(y_p^{-1}, x_p^{\kappa_p}y_p; \sigma_{p, \nu}) \\
& = |y_p|^{-2 (\epsilon_{3, \nu} + \cdots + \epsilon_{r, \nu})} \Phi_2(y_p, x_p^{\kappa_p} y_p; \sigma_{p, \nu}) = |y_p|^{-2 (\epsilon_{3, \nu} + \cdots + \epsilon_{r, \nu})} \Phi_1(x_p, y_p; \sigma_{p, \nu}).
\end{split}
\]
Now, deriving both sides with respect to \( w_p \) and \( \overbar{w}_p \) and setting \( w_p, \bar{w}_p = 0 \) yields,
\[
 \frac{\partial^{2\nu} \Phi_2}{\partial w_p^\nu \partial \overbar{w}_p^\nu}(z_p, 0; \sigma_{p, \nu}) = |y_p|^{-2(\epsilon_{3, \nu} + \cdots + \epsilon_{r, \nu} + \kappa_p \nu)} \frac{\partial^{2\nu} \Phi_1}{\partial x_p^\nu \partial \bar{x}_p^\nu}(0, y_p; \sigma_{p, \nu}).
\]
This, together with \fref{lemma:relation-epsilon}, shows that the differential forms
\[
  |y_p|^{2 \epsilon_{1,\nu}} \frac{\partial^{2\nu} \Phi_1}{\partial^\nu x_p \partial^\nu \bar{x}_p}(0, y_p; \sigma_{p, \nu}) \dd y_p \wedge \dd \bar{y}_p, \quad |z_p|^{2 \epsilon_{2, \nu}} \frac{\partial^{2\nu} \Phi_2}{\partial w_p^\nu \partial \bar{w}_p}(z_p, 0; \sigma_{p, \nu}) \dd w_p \wedge \dd \bar{w}_p,
\]
define a global section on \( E_p \). As a consequence, it suffices to use \( z_p y_p = 1 \) in either of the integrals in \fref{eq:prop-residue-1}, together with the fact that \( \eta_1\restr{E_p} +\, \eta_2\restr{E_p} \equiv 1 \).
\end{proof}

\begin{corollary} \label{cor:corollary-residue}
The residue of the complex zeta function \( f^s \) at \( s = \sigma_{p, \nu} \) is given, in the \( U_p \) chart, by
\begin{equation} \label{eq:corollary-residue}
\begin{split}
\Res_{s = \sigma_{p, \nu}} \langle f^s, \varphi \rangle = &\ \frac{-2 \pi i}{(\nu!)^2} \int_{|y_p| \leq R} |y_p|^{2\epsilon_{1, \nu}} \frac{\partial^{2\nu} \Phi_1}{\partial x_p^{\nu} \partial \bar{x}_p^\nu}(0, y_p; \sigma_{p, \nu})\, dy_p d\bar{y}_p \\
  + &\ \frac{-2 \pi i}{(\nu!)^2} \int_{|y_p| > R} |y_p|^{2\epsilon_{1, \nu}} \frac{\partial^{2\nu} \Phi_1}{\partial x_p^{\nu} \partial \bar{x}_p^\nu}(0, y_p; \sigma_{p, \nu})\, dy_p d\bar{y}_p, \quad R > 0,
\end{split}
\end{equation}
and analogously for the other chart \( V_p \).
\end{corollary}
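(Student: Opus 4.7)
The plan is to recognize that Corollary \ref{cor:corollary-residue} is, in essence, just Proposition \ref{prop:residue} with the domain of integration split. So the proof will reduce to observing that the integral over $\mathbb{C}$ in the $U_p$ chart decomposes, for any $R > 0$, as
\[ \int_{\mathbb{C}} = \int_{|y_p| \leq R} + \int_{|y_p| > R}, \]
and verifying that each piece is individually meaningful as an (improper) integral. Once that is done, linearity of integration gives the result, and the analogous argument on $V_p$ follows by the same reasoning (with $z_p$ in place of $y_p$ and $\epsilon_{2,\nu}$ in place of $\epsilon_{1,\nu}$).

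First I would treat the outer piece $\int_{|y_p| > R}$. On this region the weight $|y_p|^{2\epsilon_{1,\nu}}$ is smooth and bounded, since it is bounded away from the singular point $y_p = 0$. The key input is that the resolution morphism $\pi$ is proper and $\varphi \in C_c^\infty(U)$, so $\pi^*\varphi$ has compact support on $X'$. Consequently, $\Phi_1 = |u_1|^{2\sigma_{p,\nu}}|v_1|^{2}(\pi^*\varphi)|_{U_p}$, and all of its partial derivatives, are compactly supported in $U_p$. Hence the outer integrand has compact support, and the outer integral is convergent as an ordinary Lebesgue integral.

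Next I would handle the inner piece $\int_{|y_p| \leq R}$, which absorbs the potential singularity at $y_p = 0$ coming from $|y_p|^{2\epsilon_{1,\nu}}$ when $\epsilon_{1,\nu} \leq -1$. Here one has to interpret the integral through the regularization/analytic-continuation mechanism developed in \fref{sec:poles-order-two} and used in the proof of \fref{prop:residue}: the residue formula in \fref{prop:residue} is itself obtained by analytic continuation in $s$, and this continuation endows the inner integral with a well-defined value even when the $y_p$-integrand fails to be Lebesgue integrable near the origin. In particular, the smooth factor $\partial^{2\nu}\Phi_1/\partial x_p^\nu \partial \bar x_p^\nu$ admits a Taylor expansion in $(y_p, \bar y_p)$ at the origin, and subtracting a finite initial part (exactly as in \fref{prop:regularization} applied in the variable $y_p$) produces an absolutely convergent integral, plus finitely many explicit terms of the form $c_{k,l}\int_{|y_p|\le R}|y_p|^{2\epsilon_{1,\nu}}y_p^k\bar y_p^l\,dy_pd\bar y_p$ which can be computed in polar coordinates.

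The main (and only) obstacle is thus being precise about the meaning of the inner integral when $\epsilon_{1,\nu}$ is a negative integer with large enough absolute value; once one agrees to the regularized interpretation—which is already implicit in \fref{prop:residue}—the corollary is immediate by additivity of integration over a disjoint decomposition of $\mathbb{C}$. The argument in the $V_p$ chart is identical, with the roles of $(x_p, y_p)$ and $(z_p, w_p)$, and of $\epsilon_{1,\nu}$ and $\epsilon_{2,\nu}$, exchanged.
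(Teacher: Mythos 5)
There is a genuine gap in the argument for the outer piece \( \int_{|y_p| > R} \). You claim that \( \Phi_1 \) and its derivatives are compactly supported in \( U_p \) because \( \pi^*\varphi \) has compact support on \( X' \); this is false. The support of \( \pi^*\varphi \) is indeed compact in \( X' \), but it contains \( E_p \) (since \( \pi(E_p) = \boldsymbol{0} \) and \( \varphi(\boldsymbol{0}) \ne 0 \) in general), and \( E_p \cap U_p \) is the \emph{unbounded} \( y_p \)-axis of \( U_p \cong \mathbb{C}^2 \). So \( \mathrm{Supp}(\pi^*\varphi) \cap U_p \) is not compact in \( U_p \), \( \Phi_1(0, y_p; \sigma_{p,\nu}) \) does not vanish for large \( |y_p| \), and the outer integral is \emph{not} a convergent Lebesgue integral. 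In the \( V_p \) coordinate \( z_p = 1/y_p \), that piece becomes \( \int_{|z_p| < 1/R} |z_p|^{2\epsilon_{2,\nu}} \partial^{2\nu}\Phi_2/\partial w_p^\nu\partial\bar w_p^\nu(z_p,0;\sigma_{p,\nu})\,dz_pd\bar z_p \), which diverges at \( z_p = 0 \) whenever \( \epsilon_{2,\nu} \le -1 \) — and \( \epsilon_{2,\nu} \to -\infty \) as \( \nu \to \infty \) when \( N_2 > 0 \). So both pieces, not just the inner one, must be given a regularized meaning, and the naive ``linearity of integration over a disjoint decomposition'' does not come for free once the ambient integral is itself only defined by analytic continuation (cf.\ \fref{rmk:remark-integral}).

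The paper avoids this issue by not splitting the single \( \int_{\mathbb{C}} \) at all. Instead it goes back to equation \fref{eq:prop-residue-1}, where the residue is already expressed as a sum of two integrals, one in each chart, each damped by a cut-off \( \eta_1, \eta_2 \); it then chooses the partition of unity so that \( \eta_1|_{E_p} \) is the indicator of \( |y_p| \le R \) and \( \eta_2|_{E_p} \) that of \( |z_p| < 1/R \), and rewrites the \( V_p \)-integral in the \( y_p \) variable using the global-section identity from the proof of \fref{prop:residue}. Each summand is then well-defined by the chart-wise regularization of \fref{sec:poles-order-two} (each singular point sits at the \emph{origin} of its own chart), so no additional convergence argument is needed. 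If you want to salvage your approach of splitting the \( \int_\mathbb{C} \), you would have to define that integral, and its two halves, as in \fref{prop:residue-zero} — i.e.\ as analytic continuations from separate half-planes in the exponent — and then check that this coincides with the value produced by the regularization in \fref{prop:residue}; but that is essentially a rephrasing of the paper's partition-of-unity argument, not a shortcut around it.
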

\begin{proof}
The function \( \eta_1 \) can be chosen continuous and such that its restriction to \( E_p \) is \( \eta_1 \restr{E_p} \equiv 1 \) in \( |y_p| \leq R \) and \( 0 \) in \( |y_p| > R \). Because \( z_p y_p = 1 \) on the overlap of any two charts of \( E_p \), \( \eta_2\restr{E_p}\) must be identically \( 1 \) in \( |z_p| < 1/R \), i.e. \( |y_p| > R \), and zero in the complementary. The results follows now from the proof of the previous proposition. Indeed, substitute such \( \eta_1 \) and \( \eta_2 \) in \fref{eq:prop-residue-1} and use the fact that the integrand is a global section on \( E_p \).
\end{proof}

\begin{remark} \label{rmk:remark-integral}
The value of the residue \( \Res_{s = \sigma_{p, \nu}} \langle f^s, \varphi \rangle \) must be understood as the analytic continuation of the functions,
\[
\begin{split}
I_1(\alpha_1, \beta_3, \dots, \beta_r) & = \int_{\mathbb{C}} |y_p|^{2\alpha_1} \frac{\partial^{2\nu} \Phi_1}{\partial x_p^\nu \partial \bar{x}_p^\nu}(0, y_p; \beta_3, \dots, \beta_r)\, dy_p d\bar{y}_p, \\ I_2(\alpha_2, \beta_3, \dots, \beta_r) & = \int_\mathbb{C} |z_p|^{2 \alpha_2} \frac{\partial^{2\nu} \Phi_2}{\partial w_p^\nu \partial \bar{w}_p^\nu}(z_p, 0; \beta_3, \dots, \beta_r)\, dz_p d\bar{z}_p,
\end{split}
\]
at the rational points \( (\epsilon_{1, \nu}, \epsilon_{3, \nu}, \dots, \epsilon_{r, \nu}) \) and \((\epsilon_{2, \nu}, \epsilon_{3, \nu}, \dots, \epsilon_{r, \nu}) \), respectively, as these points will usually be outside the region of convergence of the integrals defining \( I_1, I_2 \).
 For simplicity of the notation, we always present \( \Phi_1, \Phi_2 \) depending only on a single variable \( \sigma_{p, \nu} \), as \( \epsilon_{3,r}, \dots, \epsilon_{r, \nu} \) are in fact \( N_3 \sigma_{p, \nu} + k_3 = \epsilon_{3, \nu}, \dots, N_r \sigma_{p, \nu} + k_r = \epsilon_{r, \nu} \).
\end{remark}


\vskip 2mm

Finally, we end this section with the following important observation. As in the monomial case \( \langle z^s, \varphi \rangle \) considered in \fref{prop:regularization}, where the residue is interpreted in terms of the derivatives of the test function \(\varphi\), and consequently, in terms of the Dirac's delta function, the same holds true for any \( f^s \). The derivatives of \( \Phi_1, \Phi_2 \) involve deriving \( \pi^*\varphi \) which, by the product rule of differentiation and the fact that \( (\pi^* \varphi)\restr{E_p} = \varphi(\boldsymbol{0})  \), imply that
\begin{equation} \label{eq:linear-combination-dirac}
\Res_{\ s = \sigma_{p, \nu}} f^s \in \big\langle \delta_{\boldsymbol{0}}^{(0,0,0,0)}, \delta_{\boldsymbol{0}}^{(1,0,0,0)}, \delta_{\boldsymbol{0}}^{(0,1,0,0)}, \dots, \delta_{\boldsymbol{0}}^{(\nu, \nu, \nu, \nu)} \big\rangle_{\mathbb{C}}.
\end{equation}
Therefore, the residue of \( f^s \) at any candidate pole must be also understood as a distribution in this precise sense. As a consequence, the residue of \( f^s \) at a candidate pole will be zero when all the coefficients of the linear combination in \fref{eq:linear-combination-dirac} are zero. In a similar way, the residue will be non-zero when just one of the coefficients is non-zero.

\subsection{Residues at non-rupture divisors} \label{sec:non-rupture-residues}
The exact expression of the residue is quite involved due to the presence of the \( \nu \)--th derivative of \( \Phi_1 \) or \( \Phi_2 \). However, from the study of the derivatives of the factors of \( \Phi_1 \) or \( \Phi_2 \) we will show when the residues at a candidate \( \sigma_{p, \nu} \) is zero for a non-rupture exceptional divisor \( E_p \). The proof uses the following technical results.

\begin{lemma}[Faà di Bruno's formula, {\cite[III.3.4]{comtet}}] \label{lemma:faa-di-bruno}
Let \( g, h \) be infinitely many times differentiable functions. Then,
\begin{equation} \label{eq:faa-di-bruno}
\frac{d^\nu}{d x^\nu} g(h(x)) = \sum_{k = 1}^\nu \frac{d^k g}{d x^k}(h(x)) B_{\nu, k}\left(\frac{dh}{dx}(x), \frac{d^2h}{dx^2}(x), \dots, \frac{d^{\nu -k +1}h}{dx^{\nu -k +1}}(x)\right),
\end{equation}
where \( B_{\nu, k} \) are the partial exponential Bell polynomials
\[ B_{\nu, k}(x_1, x_2, \dots, x_{\nu-k+1}) := \sum \frac{\nu!}{j_1! j_2! \cdots j_{\nu-k+1}!} \left(\frac{x_1}{1!}\right)^{j_1} \left(\frac{x_2}{2!} \right)^{j_2} \cdots \left(\frac{x_{\nu-k+1}}{(\nu - k + 1)!}\right)^{j_{\nu-k+1}}, \]
and where the summation takes places over all integers \( j_1, j_2, j_3, \dots, j_{\nu-k+1} \), such that
\begin{equation} \label{eq:bell-polynomials-2}
\begin{split}
& j_1 + j_2 + j_3 + \cdots + j_{\nu - k +1} = k, \\
& j_1 + 2j_2 + 3j_3 + \cdots + (\nu - k + 1)j_{\nu -k + 1} = \nu.
\end{split}
\end{equation}
\end{lemma}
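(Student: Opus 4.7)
The plan is to prove Faà di Bruno's formula by induction on the differentiation order $\nu$. The base case $\nu = 1$ is just the ordinary chain rule $\frac{d}{dx}g(h(x)) = g'(h(x))\, h'(x)$, which matches the right-hand side of \fref{eq:faa-di-bruno} since the only admissible pair in \fref{eq:bell-polynomials-2} is $k = 1$, $j_1 = 1$, giving $B_{1,1}(h'(x)) = h'(x)$.

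For the inductive step, I would differentiate both sides of \fref{eq:faa-di-bruno} once more with respect to $x$ and apply the product rule to each summand. The derivative of the $k$-th summand splits into two contributions: the chain rule applied to $\frac{d^k g}{dx^k}(h(x))$ yields a factor $\frac{d^{k+1}g}{dx^{k+1}}(h(x))\cdot h'(x)\cdot B_{\nu,k}$, while the derivative falling on the Bell polynomial yields $\frac{d^k g}{dx^k}(h(x))\cdot \frac{d}{dx}B_{\nu,k}$. After relabeling the outer index and collecting terms according to the order of the $g$-derivative, the identity reduces to the standard recurrence
\[
B_{\nu+1,k}(x_1,\ldots,x_{\nu-k+2}) = \sum_{j=0}^{\nu-k+1}\binom{\nu}{j}\, x_{j+1}\, B_{\nu-j,\,k-1}(x_1,\ldots,x_{\nu-k-j+1})
\]
for the partial exponential Bell polynomials, which is a purely combinatorial identity (see \cite{comtet}).

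A conceptually cleaner alternative, which I would prefer to present, is the generating function proof. Introduce the formal series $H(t) := \sum_{m \geq 1} h^{(m)}(x)\,\frac{t^m}{m!}$ and expand
\[
g(h(x) + H(t)) = \sum_{k \geq 0} \frac{g^{(k)}(h(x))}{k!}\, H(t)^k.
\]
By Taylor's theorem, the coefficient of $t^\nu/\nu!$ on the left-hand side equals $\frac{d^\nu}{dx^\nu}g(h(x))$. On the right-hand side, the defining identity $\frac{1}{k!}H(t)^k = \sum_{n \geq k}B_{n,k}\bigl(h'(x),h''(x),\ldots\bigr)\frac{t^n}{n!}$ for the partial Bell polynomials reproduces \fref{eq:faa-di-bruno} term by term. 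The main technical point is verifying the closed-form expression for $B_{\nu,k}$ in terms of multinomial coefficients with the constraints \fref{eq:bell-polynomials-2}; this amounts to counting ordered factorizations of $H(t)^k$ and is a standard bookkeeping computation with no conceptual obstacle. Since the result is classical, the paper simply cites \cite{comtet}.
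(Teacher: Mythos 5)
The paper offers no proof of this lemma at all --- it is quoted as a classical result with the reference to Comtet \cite{comtet} --- so there is no internal argument to compare against; you were essentially asked to supply a textbook proof, and both routes you sketch (induction via the Bell-polynomial recurrence, and the generating-function expansion) are standard and correct in outline. Two points should be tightened before either is written out. First, in the recurrence your truncation of the argument list is off by one: $B_{\nu-j,\,k-1}$ depends on $x_1,\dots,x_{\nu-j-k+2}$, not on $x_1,\dots,x_{\nu-k-j+1}$; as stated you drop the top variable, which is harmless in spirit but should be corrected, and the recurrence itself is what must be verified (directly from the defining sum with the constraints \fref{eq:bell-polynomials-2}) rather than merely invoked, since it carries the whole combinatorial content of the inductive step. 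Second, the generating-function argument as phrased expands a merely smooth $g$ in its full Taylor series about $h(x)$, which need not converge for $g,h\in C^\infty$; to make it rigorous you should work modulo $t^{\nu+1}$, i.e.\ with $\nu$-jets, using Taylor's theorem with remainder of order $\nu+1$ for $g$ and the observation that $H(t)^k=O(t^k)$, so every discarded term is $O(t^{\nu+1})$ --- or, alternatively, note that \fref{eq:faa-di-bruno} is a universal polynomial identity in the derivatives of $g$ and $h$ up to order $\nu$, so it suffices to verify it for polynomial $g,h$, where the formal manipulation is literal. With these repairs either of your arguments is a complete proof of the cited lemma.
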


For instance, in the chart \( U_p \) around \( E_p \), we are interested in the situation where \( g(x) = x^s \) and \( h(x_p) \) is equal to \(u_1(x_p, y_p)\) from \fref{prop:integral-along-divisor}, and we set \( x_p = 0 \) after deriving. In this case, \fref{eq:faa-di-bruno} reads as
\begin{equation} \label{eq:faa-di-bruno-2}
\frac{\partial^\nu u_1^s}{\partial x_p^\nu}(0, y_p) = \sum_{k = 1}^\nu (s)_k \big(u_1(0, y_p)\big)^{s-k} B_{\nu, k}\left(\frac{d {u_1}}{dx_p}(0, y_p), \frac{d^2 {u_1}}{dx_p^2}(0, y_p), \dots, \frac{d^{\nu - k + 1} {u_1}}{d x_p^{\nu - k + 1}}(0, y_p)\right),
\end{equation}
where \( (s)_k := s (s-1) \cdots (s-k+1) \). And similarly in the other chart \( V_p \).

\begin{proposition}[{\cite[I.3.8]{gelfand-shilov}}] \label{prop:residue-zero}
For any \( \alpha, \alpha' \in \mathbb{C} \) such that \( \alpha' - \alpha = n \in \mathbb{Z} \), the analytic continuation of the sum
\[ I_n(\alpha) := \int_{|z| \leq R} z^{\alpha'} \bar{z}^{\alpha} dz d\bar{z} + \int_{|z| > R} z^{\alpha'} \bar{z}^{\alpha} dz d\bar{z} \quad \textrm{for any} \quad R > 0. \]
is zero everywhere, i.e. \( I_n(\alpha) \equiv 0 \).
\end{proposition}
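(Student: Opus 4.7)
The plan is to evaluate both integrals explicitly in polar coordinates and observe that either the angular integration already forces the answer to be zero, or the analytic continuations of the two radial integrals cancel out.

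First I would pass to polar coordinates by writing $z = r e^{i\theta}$, so that $dz\, d\bar{z} = -2 i\, r\, dr\, d\theta$ and
\begin{equation*}
z^{\alpha'} \bar{z}^{\alpha} = r^{\alpha + \alpha'} e^{i (\alpha' - \alpha) \theta} = r^{\alpha + \alpha'} e^{i n \theta}.
\end{equation*}
The inner integral then equals $-2i \int_0^{2\pi} e^{in\theta} d\theta \int_0^R r^{\alpha + \alpha' + 1} dr$, which is absolutely convergent in the half-plane $\textrm{Re}(\alpha + \alpha') > -2$, and analogously the outer integral equals the same expression with the radial integration taken on $[R, \infty)$, convergent in $\textrm{Re}(\alpha + \alpha') < -2$.

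Case $n \neq 0$: the angular factor $\int_0^{2\pi} e^{in\theta} d\theta$ vanishes, so each of the two integrals is identically zero on its original domain of convergence. By the principle of analytic continuation, both meromorphic continuations are zero everywhere, and hence so is their sum $I_n(\alpha)$.

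Case $n = 0$: here $\alpha' = \alpha$ and the angular integral equals $2\pi$. A direct computation in the respective regions of convergence gives
\begin{equation*}
\int_{|z| \leq R} |z|^{2\alpha}\, dz\, d\bar{z} = -4 \pi i\, \frac{R^{2\alpha + 2}}{2\alpha + 2}, \qquad \int_{|z| > R} |z|^{2\alpha}\, dz\, d\bar{z} = +4 \pi i\, \frac{R^{2\alpha + 2}}{2\alpha + 2},
\end{equation*}
valid for $\textrm{Re}(\alpha) > -1$ and $\textrm{Re}(\alpha) < -1$ respectively. These rational functions in $\alpha$ are the unique meromorphic continuations of each integral, and they sum to zero for every $\alpha \neq -1$. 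The apparent simple poles at $\alpha = -1$ cancel, giving $I_0(\alpha) \equiv 0$ after removing the singularity (or one checks that the limit agrees with the logarithmic divergences cancelling between the two pieces).

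The only subtlety is the bookkeeping at $n = 0$ and $\alpha = -1$, where both summands individually develop a pole: the argument is that the residues are opposite, so the sum extends holomorphically through $\alpha = -1$ with value $0$, consistent with what is obtained for all other $\alpha$. Apart from this, everything reduces to a one-line polar-coordinate computation.
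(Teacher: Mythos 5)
Your proposal is correct and follows essentially the same route as the paper: pass to polar coordinates, note the angular integral $\int_0^{2\pi} e^{in\theta}\, d\theta$ kills everything for $n \neq 0$, and for $n = 0$ compute the two radial integrals in their respective half-planes of convergence and observe their analytic continuations $\mp 2\pi i\, R^{2(\alpha+1)}/(\alpha+1)$ cancel identically. Your extra remark about the removable singularity at $\alpha = -1$ is a harmless refinement of the same argument.
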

\begin{proof}
Using polar coordinates%
\footnote{By definition, \( z^{\alpha'} \bar{z}^\alpha := |z|^{\alpha' + \alpha} e^{i (\alpha' - \alpha) \arg z},\) which, for integral \( \alpha' - \alpha \), is a single valued function of \( z \).}
\[ - 2 i \int_{0}^R \int_{0}^{2\pi} r^{2\alpha + n + 1} e^{2 \pi i n \theta} d \theta dr - 2 i \int_{R}^\infty \int_{0}^{2 \pi} r^{2 \alpha + n + 1} e^{2 \pi i n \theta} d \theta dr. \]
However,
\[ \int_{0}^{2\pi} e^{2 \pi i n \theta} d \theta =  \begin{cases}
      \, 0,  & n \neq 0, \\
      \, 2 \pi, & n = 0.
\end{cases} \]
Hence, the result follows if \( n \neq 0 \). In the case that \( n = 0 \), the first integral defines an holomorphic function in \( \textrm{Re}(\alpha) > -1 \). It can be analytically continued by means of
\[ -4 \pi i \int_{0}^R r^{2 \alpha + 1} dr = - 2 \pi i \frac{R^{2(\alpha + 1)}}{\alpha + 1} \quad \textrm{for} \quad \alpha \neq -1. \]
Similarly, the other integral defines an holomorphic functions in \( \textrm{Re}(\alpha) < -1 \), and the analytic continuation to the whole complex plane is
\[ -4 \pi i \int_{R}^\infty r^{2\alpha + 1} dr = 2 \pi i \frac{R^{2(\alpha + 1)}}{\alpha + 1} \quad \textrm{for} \quad \alpha \neq -1. \]
Finally, the sum of the analytic continuation of both integrals is identically zero.
\end{proof}

In the following proposition, we generalize a calculation attributed to Cohen appearing in an article of Barlet \cite{barlet86-2} and used by Lichtin in \cite{lichtin89}. The original result gives a closed form for the integral in \fref{eq:residue-integral-1} in the case \( R_{0, 0}(\alpha, \beta) \). We provide a formula for the general case \( R_{n, m}(\alpha, \beta) \).

\begin{proposition} \label{prop:residue-integral}
For \( \alpha, \alpha', \beta, \beta' \in \mathbb{C} \), such that \( \alpha' - \alpha = n \in \mathbb{Z}\) and \( \beta' - \beta = m \in \mathbb{Z}\), the integral
\begin{equation} \label{eq:residue-integral-1}
R_{n, m}(\alpha, \beta) := \int_{\mathbb{C}} z^{\alpha'} \bar{z}^{\alpha} (1-\lambda z)^{\beta'}(1- \bar{\lambda}\bar{z})^{\beta} dz d \bar{z}  = R_{-n,-m}(\alpha', \beta'), \quad \lambda \in \mathbb{C}^*,
\end{equation}
is absolutely convergent for \( \emph{\textrm{Re}}(\alpha' + \alpha) > -2 \), \( \emph{\textrm{Re}}(\beta' + \beta) > -2 \) and \( \emph{\textrm{Re}}(\alpha' + \alpha + \beta' + \beta) < -2 \). It defines a meromorphic function on \(\mathbb{C}^2 \) equal to
\begin{equation} \label{eq:residue-integral-2}
R_{n,m}(\alpha, \beta) = - 2 \pi i \lambda^{-\alpha'-1} \bar{\lambda}^{-\alpha-1} \frac{\Gamma(\alpha + 1) \Gamma(\beta + 1) \Gamma(\gamma + 1)}{\Gamma(- \alpha - n) \Gamma(- \beta - m) \Gamma(- \gamma - n - m)},
\end{equation}
where \( \gamma := - \alpha - \beta - n - m - 2 \).
\end{proposition}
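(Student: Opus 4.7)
The first step is a scaling reduction to $\lambda=1$. The $\mathbb{R}$-linear change of variables $w=\lambda z$ has real Jacobian $|\lambda|^{2}=\lambda\bar\lambda$, sends $1-\lambda z$ to $1-w$, and turns $z^{\alpha'}\bar z^{\alpha}$ into $\lambda^{-\alpha'}\bar\lambda^{-\alpha}\,w^{\alpha'}\bar w^{\alpha}$. Collecting the scalars yields the announced prefactor $\lambda^{-\alpha'-1}\bar\lambda^{-\alpha-1}$ and reduces the problem to the case $\lambda=1$. The claimed symmetry $R_{n,m}(\alpha,\beta)=R_{-n,-m}(\alpha',\beta')$ is just the invariance of the integral under the complex conjugation $z\leftrightarrow\bar z$, since the latter interchanges the roles of $(\alpha',\alpha)$ and of $(\beta',\beta)$.

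Next I would establish absolute convergence by inspecting the three singular loci of the integrand on $\mathbb{C}$: it is $O(|z|^{\alpha+\alpha'})$ near $z=0$, $O(|1-z|^{\beta+\beta'})$ near $z=1$, and $O(|z|^{\alpha+\alpha'+\beta+\beta'})$ as $|z|\to\infty$. Integrability at each of these three places gives precisely the three strict inequalities in the statement.

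The heart of the proof is the explicit evaluation at $\lambda=1$. My plan is to pass to polar coordinates $z=re^{i\theta}$, so $dz\, d\bar z=-2ir\, dr\, d\theta$, and to expand both $(1-re^{i\theta})^{\beta'}$ and $(1-re^{-i\theta})^{\beta}$ as binomial series (valid for $r<1$, with the analogous expansion obtained from $1-re^{\pm i\theta}=-re^{\pm i\theta}(1-r^{-1}e^{\mp i\theta})$ on $r>1$). Fourier orthogonality of $\{e^{ik\theta}\}_{k\in\mathbb{Z}}$ then collapses the $\theta$-integral to a single diagonal series indexed by $n$, and the remaining $r$-integrals on $[0,1]$ and $[1,\infty)$ reduce, after substitution $u=r^{2}$ and $u=r^{-2}$ respectively, to a pair of Gauss hypergeometric series of type ${}_{2}F_{1}(\,\cdot\,,\,\cdot\,;\,\cdot\,;1)$. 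Gauss's summation theorem evaluates these in closed form, and then the reflection formula $\Gamma(a)\Gamma(1-a)=\pi/\sin(\pi a)$ together with the identity $(x)_{k}=\Gamma(x+k)/\Gamma(x)$ assemble the Pochhammer ratios coming out of the two hypergeometric pieces into the symmetric Gamma expression $\Gamma(\alpha+1)\Gamma(\beta+1)\Gamma(\gamma+1)/\Gamma(-\alpha-n)\Gamma(-\beta-m)\Gamma(-\gamma-n-m)$, multiplied by the universal factor $-2\pi i$ produced by the $d\theta$ integration.

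Finally, meromorphic continuation to all of $(\alpha,\beta)\in\mathbb{C}^{2}$ is automatic: the right-hand side is manifestly meromorphic, while the left-hand side is an entry of a distribution-valued meromorphic family by the Gel'fand-Shilov regularization of $z^{\alpha'}\bar z^{\alpha}$ and $(1-z)^{\beta'}(1-\bar z)^{\beta}$. The identity therefore extends from the open convergence region to all of $\mathbb{C}^{2}$. The main obstacle I foresee is the combinatorial bookkeeping in the third step, that is, matching the Pochhammer ratios produced by the two ${}_{2}F_{1}$'s (and by the $r>1$ expansion, which introduces extra powers of $-1$ through the phase factors $e^{\pm i\theta}$) to the three Gamma-ratios on the right. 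The base case $n=m=0$ is the classical complex Beta integral $\int_{\mathbb{C}}|z|^{2\alpha}|1-z|^{2\beta}\,dz\,d\bar z$ computed in Gel'fand-Shilov \cite{gelfand-shilov} and provides a useful consistency check for the Gamma-function manipulations.
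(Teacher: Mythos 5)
Your preliminary steps are sound: the scaling to \( \lambda = 1 \), the conjugation symmetry, the three-point convergence analysis (at \( 0 \), at \( 1/\lambda \), at infinity), and the final continuation argument via meromorphy of both sides all match what is needed, and the last of these is consistent with how the paper treats regularization. The gap is in the central evaluation, and it is exactly the step you flag as "bookkeeping": it is not mere bookkeeping. If you expand \( (1-re^{i\theta})^{\beta'} \) and \( (1-re^{-i\theta})^{\beta} \) separately on \( r<1 \) (and the inverted expansion on \( r>1 \)), Fourier orthogonality collapses the double series to a single index \( j \), but the subsequent elementary radial integral contributes a factor \( 1/(\alpha'+j+1) \); the resulting series has general term of the shape \( \binom{\beta'}{j}\binom{\beta}{j+n}/(\alpha'+j+1) \), i.e. a \( {}_3F_2 \) at unit argument, not a Gauss \( {}_2F_1(1) \) — and the same happens on \( r>1 \) after \( r\mapsto 1/\rho \). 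These two \( {}_3F_2(1) \)'s are not individually summable by Gauss's theorem; the fact that their \emph{sum} collapses to the single Gamma ratio is essentially equivalent to the proposition itself and requires either a three-term relation among \( {}_3F_2(1) \)'s or a different organization of the computation. So the plan, as written, stalls precisely at its heart.

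The paper circumvents this by never splitting the radial domain. For \( m=0 \) one writes \( |1-z|^{2\beta} = (1+r^2)^{\beta}\bigl(1-\tfrac{2r\cos\theta}{1+r^2}\bigr)^{\beta} \) and expands the second factor binomially in powers of \( \cos\theta \); since \( |2r/(1+r^2)|\leq 1 \) for \emph{all} \( r\geq 0 \), this expansion is valid on the whole ray, the angular integrals \( \int_0^{2\pi} e^{in\theta}\cos^k\theta\,d\theta \) are elementary, and each radial integral over \( [0,\infty) \) is a single Beta integral \( \int_0^\infty r^{\mu-1}(1+r^2)^{\nu}dr \). Only then does one obtain a single hypergeometric series of type \( {}_2F_1(a,b;c;1) \), which Gauss's theorem evaluates; the case \( m\neq 0 \) is reduced to \( m=0 \) by the finite binomial expansion of \( (1-z)^m \), and the resulting finite sum is resummed by one more application of Gauss. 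If you want to salvage your two-region scheme you would need genuinely more hypergeometric machinery (e.g. contiguous/three-term relations for \( {}_3F_2(1) \), or a contour-splitting argument), so either import that or switch to the single-expansion route above.
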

\begin{proof}
Let us prove first the case \( m = 0 \). Since \( R_{n, 0}(\alpha, \beta) = R_{-n, 0}(\alpha', \beta') \), we can assume \( n \in \mathbb{Z}_{\geq 0} \). Using polar coordinates \( \lambda z = re^{i\theta} \), we have that \( (1- \lambda z)^{\beta'}(1- \bar{\lambda} \bar{z})^{\beta} = |1-\lambda z|^{2\beta} = (1 - 2r \cos \theta + r^2)^{\beta} \) and
\[ R_{n,0}(\alpha, \beta) = -2i \lambda^{-\alpha'-1} \bar{\lambda}^{-\alpha-1} \int_0^{2\pi} \int_0^{\infty} r^{2\alpha + n + 1} e^{i n \theta} (1 - 2r \cos \theta + r^2)^{\beta} dr d\theta. \]
For simplicity, we can set \( \lambda = 1 \). We have, \( \displaystyle (1 - 2r \cos{\theta} + r^2)^{\beta} = (1+r^2)^{\beta} \left(1 - \frac{2r \cos{\theta}}{1 + r^2}\right)^\beta \) and since \( |2r/(1+r^2)| \leq 1 \), we may expand the binomial,
\[ \left(1 - \frac{2r \cos{\theta}}{1 + r^2}\right)^\beta = \sum_{k=0}^{\infty} \binom{\beta}{k} \frac{(-2)^k r^k}{(1+r^2)^k}\cos^k{\theta}. \]
The angular part of the integral is just
\[ \int_{0}^{2 \pi} e^{i n \theta} \cos^k{\theta}\ d \theta = \left \{
  \begin{array}{cl}
    \displaystyle \frac{2\pi}{4^l} \binom{2l}{l - s}, & \textrm{if} \quad k = 2l \geq n = 2s, \quad l \in \mathbb{Z}_{\geq 0}, s \in \mathbb{Z}_{\geq 0}, \vspace{0.2cm}\\
    \displaystyle \frac{\pi}{4^l} \binom{2l + 1}{l - s}, & \textrm{if} \quad k = 2l+1 \geq n = 2s+1, \quad l \in \mathbb{Z}_{\geq 0}, s \in \mathbb{Z}_{\geq 0}, \vspace{0.2cm}\\
    0, & \textrm{otherwise}.
  \end{array}
\right . \]
The integral then reads as
\[ R_{2s,0}(\alpha, \beta) = - 4 \pi i \int_{0}^{\infty} r^{2\alpha + 2s + 1} (1 + r^2)^\beta \sum_{l = 0}^\infty \binom{2l}{l-s} \binom{\beta}{2l} \frac{r^{2l}}{(1 + r^2)^{2l}} dr, \]
for \( n \) even, and
\[ R_{2s+1,0}(\alpha, \beta) = 4 \pi i \int_{0}^{\infty} r^{2\alpha + 2s + 2} (1 + r^2)^\beta \sum_{l = 0}^\infty \binom{2l + 1}{l-s} \binom{\beta}{2l + 1} \frac{r^{2l + 1}}{(1 + r^2)^{2l + 1}} dr, \]
for \( n \) odd. Using that \( \displaystyle \binom{\beta}{k} = (-1)^k \frac{\Gamma(k - \beta)}{\Gamma(- \beta)k!}\) and interchanging the summation and integral signs,
\[
\begin{split}
R_{2s,0}(\alpha, \beta) = \frac{-4 \pi i}{\Gamma(- \beta)} \sum_{l = 0}^{\infty} \frac{\Gamma(2l - \beta)}{(l-s)!(l+s)!} \int_{0}^{\infty} r^{2\alpha + 2s + 2l + 1} (1 + r^2)^{\beta - 2l} dr, \\
R_{2s+1,0}(\alpha, \beta) = \frac{-4 \pi i}{\Gamma(- \beta)} \sum_{l = 0}^{\infty} \frac{\Gamma(2l + 1 - \beta)}{(l-s)!(l+s+1)!} \int_{0}^{\infty} r^{2\alpha + 2s + 2l + 3} (1 + r^2)^{\beta - 2l - 1} dr.
\end{split}
\]
Now, for \( \textrm{Re}(\mu) > 0 \) and \( \textrm{Re}(2\nu + \mu) < 0\),
\[ \int_{0}^{\infty} x^{\mu - 1} (1 + x^2)^\nu dx = \frac{1}{2} \textrm{B}\left(\frac{\mu}{2}, -\nu - \frac{\mu}{2}\right) = \frac{1}{2}\Gamma\left(\displaystyle\frac{\mu}{2}\right) \Gamma\left(\displaystyle-\nu-\frac{\mu}{2}\right)\Gamma(-\nu)^{-1}. \]
See, for instance, \cite[3.251--2]{gradshteyn-ryzhik}. For \( \mu_l = 2(\alpha + s + l + 1) \) and \( \nu_l = \beta - 2l \),
\[ R_{2s}(\alpha, \beta) = \frac{-2\pi i}{\Gamma(- \beta)} \sum_{l = 0}^{\infty} \frac{\Gamma(\alpha + s + l + 1) \Gamma(l - \alpha - s - \beta - 1)}{(l-s)! (l+s)!}, \]
since \( \textrm{Re}(\mu_l) > 0 \) and \(\textrm{Re}(2\nu_l + \mu_l) < 0  \) for all \( l \in \mathbb{Z}_{\geq 0} \). Analogously, for \( n \geq 0 \) odd, \( \mu_l = 2(\alpha + s + l + 2) \) and \( \nu_l = \beta - 2l - 1 \),
\[ R_{2s+1,0}(\alpha, \beta) = \frac{-2\pi i}{\Gamma(- \beta)} \sum_{l = 0}^{\infty} \frac{\Gamma(\alpha + s + l + 2) \Gamma(l - \alpha - s - \beta - 1)}{(l-s)! (l+s+1)!}. \]
Since \( \Gamma(-k)^{-1} = 0 \) for \( k \in \mathbb{Z}_{\geq 0} \), we can write
\[ \begin{split}
R_{2s,0}(\alpha, \beta) = \frac{-2\pi i}{\Gamma(- \beta)} \sum_{l=0}^{\infty} \frac{\Gamma(\alpha + l + 1) \Gamma(l - \alpha - 2s - \beta -1)}{\Gamma(l - 2s + 1)\, l!}, \\
R_{2s+1,0}(\alpha, \beta) = \frac{-2\pi i}{\Gamma(- \beta)} \sum_{l=0}^{\infty} \frac{\Gamma(\alpha + l + 1) \Gamma(l - \alpha - 2s - \beta - 2)}{\Gamma(l - 2s)\, l!}.
\end{split} \]
Finally, for \(\textrm{Re}(c) > \textrm{Re}(a+b) \),
\begin{equation} \label{eq:hypergeometric-sum}
\sum_{k=0}^{\infty} \frac{\Gamma(a + k) \Gamma(b + k)}{\Gamma(c + k) k!} = \frac{\Gamma(a) \Gamma(b)}{\Gamma(c)} {}_{2}F_1(a, b; c; 1) = \frac{\Gamma(a)\Gamma(b)\Gamma(c - a - b)}{\Gamma(c - a) \Gamma(c - b)},
\end{equation}
where \( {}_2F_1(a, b; c; z) \) is the hypergeometric function. For the last equality see, for instance, \cite[9.122--1]{gradshteyn-ryzhik}. For \( n = 2s \), set \( a = \alpha + 1, b = -\alpha - \beta - 2s - 1\) and \(c = -2s + 1\). Similarly, for \( n = 2s + 1 \), \( a = \alpha + 1, b = -\alpha - \beta -2s -2 \) and \( c = -2s \). Then,
\[ R_{2s,0}(\alpha , \beta)\hspace{-0.5pt} = R_{2s+1,0}(\alpha, \beta) = \hspace{-0.5pt} -2 \pi i \frac{\Gamma(\alpha + 1) \Gamma(- \alpha - \beta - n - 1) \Gamma(\beta + 1)}{\Gamma(- \beta) \Gamma(-n - \alpha) \Gamma(\alpha + \beta + 2)}. \]
For the case where \( m \neq 0 \), having proved the result for \( n \in \mathbb{Z}, m = 0 \), we can assume without loss of generality that \( m \in \mathbb{Z}_{\geq 0} \). Keeping \( \lambda = 1 \), notice that
\[ R_{n,m}(\alpha, \beta) = \int_{\mathbb{C}} z^{\alpha'} \overbar{z}^{\alpha} |1-z|^{2 \beta}(1-z)^m dz d \overbar{z} =  \sum_{j=0}^m \binom{m}{j} (-1)^j R_{n+j,0}(\alpha, \beta). \]
Hence,
\[ R_{n, m}(\alpha, \beta) = -2 \pi i \frac{\Gamma(\alpha + 1) \Gamma(\beta + 1)}{\Gamma(- \beta) \Gamma(\alpha + \beta + 2)} \sum_{j = 0}^{m} \binom{m}{j} (-1)^j \frac{\Gamma(- \alpha - \beta - n - j - 1)}{\Gamma(-n -j -\alpha)}. \]
Since the binomial coefficient is zero if \( j > m \), we can consider the infinite sum. Expanding the binomial coefficient in terms of the Gamma function
\[ R_{n, m}(\alpha, \beta) = -2 \pi i \frac{\Gamma(\alpha + 1) \Gamma(\beta + 1)}{\Gamma(- \beta) \Gamma(\alpha + \beta + 2) \Gamma(-m) } \sum_{j = 0}^{\infty} \frac{\Gamma(j - m) \Gamma(- \alpha - \beta - n - j - 1)}{\Gamma(-n -j -\alpha)\, j!}. \]
Using the functional equation \( \Gamma(z+1) = z\Gamma(z) \) at each term
\[ \begin{split}
  R_{n, m}(\alpha, \beta) = & -2 \pi i \frac{\Gamma(\alpha + 1) \Gamma(\beta + 1) \Gamma(- \alpha - \beta -n-1) \Gamma(\alpha + \beta + n + 2)}{\Gamma(- \beta) \Gamma(\alpha + \beta + 2) \Gamma(-m) \Gamma(- \alpha - n) \Gamma(\alpha + n + 1)} \\ & \cdot \sum_{j=0}^\infty \frac{\Gamma(j - m) \Gamma(\alpha + n + j + 1)}{\Gamma(\alpha + \beta + n + j + 2) \, j!}.
  \end{split} \]
Applying \fref{eq:hypergeometric-sum} once again, since \( \textrm{Re}(\beta' + \beta) > -2 \) implies \( \textrm{Re}(\beta + m) > -1 \),
\[  R_{n, m}(\alpha, \beta) = -2 \pi i \frac{\Gamma(\alpha + 1) \Gamma(\beta + 1) \Gamma(- \alpha - \beta -n-1) \Gamma(\alpha + \beta + n + 2) \Gamma(\beta + m + 1)}{\Gamma(- \beta) \Gamma(\alpha + \beta + 2) \Gamma(- \alpha - n) \Gamma(\alpha + \beta + n + m + 2) \Gamma(\beta + 1) }. \]
And we get the desired result using the functional equation \( \Gamma(z+1) = z \Gamma(z) \) once again.
\end{proof}

It is now possible to prove the following result regarding non-rupture divisors. Recall that with the notation above, the divisors crossing a non-rupture exceptional divisor \( E_p \) can only be \( D_1, D_2, D_3 \), with at least one being non-zero and one being zero.

\begin{theorem} \label{thm:residue-non-rupture}
Let \( f :(\mathbb{C}^2, \boldsymbol{0}) \longrightarrow (\mathbb{C}, 0) \) be any plane branch. Let \( E_p, p \in K \) be a non-rupture exceptional divisor with sequence of candidate poles \( \sigma_{p, \nu}, \nu \in \mathbb{Z}_{\geq 0} \). Then,
\begin{itemize}
\item If \( D_3 = 0 \),
\[ \Res_{\ s = \sigma_{p, \nu}} f^s = 0, \quad \textrm{for all} \quad \nu \in \mathbb{Z}_{\geq 0}.\]
\item If \( D_3 \neq 0 \),
\[ \Res_{\ s = \sigma_{p, \nu}} f^s = 0, \quad \textrm{if} \quad \epsilon_{3, \nu} \not\in \mathbb{Z}.\]
\end{itemize}
\end{theorem}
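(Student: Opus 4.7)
The starting point is the residue formula of \fref{prop:residue} together with \fref{cor:corollary-residue}, which for any $R>0$ writes the residue at $\sigma_{p,\nu}$ as the analytic continuation of $\int_{|y_p|\leq R}+\int_{|z_p|\leq 1/R}$ along $E_p$ via the chart change $y_pz_p=1$. The plan is to expand the mixed derivative $\partial^{2\nu}\Phi_i/\partial x_p^\nu\partial\bar x_p^\nu$ using Faà di Bruno (\fref{lemma:faa-di-bruno}), Taylor-expand the resulting smooth factors around the chart origins (and, in the second case, also around the point $y_0:=E_p\cap D_3$), and produce a cancellation via the angular vanishing of \fref{prop:residue-zero} combined with the adjunction identity of \fref{lemma:relation-epsilon}.

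In the case $D_3=0$, the units $u_1,v_1$ (resp.\ $u_2,v_2$) are nonvanishing along the entirety of $E_p\cap U_p$ (resp.\ $E_p\cap V_p$), so both chart integrands are smooth. Taylor-expand them at the corresponding chart origins. In polar coordinates the angular integration annihilates every off-diagonal monomial by \fref{prop:residue-zero}, and the diagonal radial integrals yield, after analytic continuation, explicit sums $-2\pi i\,c_{\alpha,\alpha}R^{2(\epsilon_{1,\nu}+\alpha+1)}/(\epsilon_{1,\nu}+\alpha+1)$ in $U_p$ and $-2\pi i\,c'_{\beta,\beta}R^{-2(\epsilon_{2,\nu}+\beta+1)}/(\epsilon_{2,\nu}+\beta+1)$ in $V_p$. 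The global-section identity used in the proof of \fref{prop:residue}, which on the overlap yields $F_1(y_p)=|y_p|^{2\kappa_p\nu}F_2(1/y_p)$ by $\epsilon_{1,\nu}+\epsilon_{2,\nu}+\kappa_p\nu+2=0$ from \fref{lemma:relation-epsilon}, pairs the diagonal Taylor coefficients as $c'_{\beta,\beta}=c_{\kappa_p\nu-\beta,\kappa_p\nu-\beta}$; combined with $\epsilon_{2,\nu}+\kappa_p\nu-\alpha+1=-(\epsilon_{1,\nu}+\alpha+1)$ from the same lemma, the two sums are negatives of each other and cancel term by term.

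In the case $D_3\ne 0$ with $\epsilon_{3,\nu}\notin\mathbb{Z}$, the extra singularity of the integrand at $y_0$ must be treated separately. Choose $R$ so that $y_0\in\{|y_p|\leq R\}$, keeping the $V_p$ contribution smooth and handled as above. Localize the $U_p$ integral with a smooth cut-off supported in $\{|y_p-y_0|<\rho\}$: outside the cut-off the integrand is smooth and the previous cancellation mechanism applies. Inside, Faà di Bruno applied to $|u_1|^{2s}$ produces a sum of terms $u_1(0,y_p)^{s-k}\overline{u_1(0,y_p)}^{s-k'}$, and writing $u_1(0,y_p)=h(y_p)(y_p-y_0)^{N_3}$ with $h$ a nonvanishing holomorphic function (by transversality of the SNC intersection), at $s=\sigma_{p,\nu}$ each such term contributes a local factor $(y_p-y_0)^{\epsilon_{3,\nu}-k_3-N_3 k}(\bar y_p-\bar y_0)^{\epsilon_{3,\nu}-k_3-N_3 k'}$ whose exponents are non-integers (by $\epsilon_{3,\nu}\notin\mathbb{Z}$) but whose difference is the integer $N_3(k'-k)$. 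Taylor-expanding the remaining smooth factor in $\tilde y=y_p-y_0$ and applying \fref{prop:residue-zero} term-by-term to the resulting monomial integrals, extended to all of $\mathbb{C}$ by analytic continuation, forces every $y_0$-localized contribution to vanish.

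The main obstacle is the bookkeeping between the two Taylor expansions (at $y_p=0$ and at $y_0$) and the simultaneous analytic continuation in $s$: one needs to verify that the localization cut-off near $y_0$ does not break the global-section cancellation of the first case, and that non-polynomial tails of the Taylor expansions coming from the smooth test function $\pi^*\varphi$ contribute zero at $s=\sigma_{p,\nu}$. \fref{prop:residue-integral} models the closed-form shape of the $y_0$-localized contribution and can serve as a sanity check for the cancellation.
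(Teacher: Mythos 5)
Your handling of the case $D_3=0$ is essentially the paper's argument: since $\pi$ contracts $E_p$ to the origin, the restricted derivatives $\partial^{2\nu}\Phi_1/\partial x_p^\nu\partial\bar x_p^\nu(0,y_p;\sigma_{p,\nu})$ are honest polynomials in $y_p,\bar y_p$ (so the ``non-polynomial tails'' you worry about do not exist), and your two-chart pairing via the global-section identity is just \fref{cor:corollary-residue} combined with \fref{prop:residue-zero} written in two charts instead of one.

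The case $D_3\neq 0$ has a genuine gap: the vanishing is not a local phenomenon at $y_0=E_p\cap D_3$, and the cut-off argument you propose fails. \Fref{prop:residue-zero} asserts the vanishing of the \emph{full} regularized integral of a pure monomial over $\mathbb{C}$ (disc plus complement); it says nothing about an integral localized by a smooth cut-off, and after Taylor-expanding the remaining smooth factor at $y_0$ the angular integration only kills the terms whose holomorphic and antiholomorphic exponents differ. The diagonal terms survive and are nonzero in general: in the model $\int \chi(|y-1|)\,|y|^{2\epsilon_1}|1-y|^{2\epsilon_3}\,dy\,d\bar y$ with $\epsilon_1,\epsilon_3$ real non-integers, expanding $|y|^{2\epsilon_1}$ at $y=1$ and integrating angularly leaves $2\pi\sum_i \binom{\epsilon_1}{i}^2\int_0^\infty \chi(r)\,r^{2(\epsilon_3+i)+1}dr$, a sum of terms of one sign. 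Nor can you ``extend the monomial integrals to all of $\mathbb{C}$'': the expansion at $y_0$ only converges up to the other singular point $y_p=0$, and it is precisely the interaction between the two singular points of $E_p$ (and the point at infinity) that produces the cancellation. The paper's mechanism is global: it works in the chart whose origin meets \emph{no} component of $F_\pi$ (the one opposite the nonzero $D_1$), where the restricted integrand is a finite sum of terms $z_p^{k'}\bar z_p^{k}(1-\lambda z_p)^{\epsilon_{3,\nu}-l'}(1-\bar\lambda\bar z_p)^{\epsilon_{3,\nu}-l}$, so the residue is a finite sum of the full-plane integrals $R_{n,m}(\alpha,\beta)$ of \fref{prop:residue-integral} with $\alpha,\alpha'$ nonnegative integers (because the weight at that origin is $\epsilon_{2,\nu}=0$), $\beta,\beta'$ non-integers (by $\epsilon_{3,\nu}\notin\mathbb{Z}$) and $\gamma$ non-integer (by \fref{lemma:relation-epsilon}); each term then vanishes because $\Gamma(\alpha+1)$ is finite, $1/\Gamma(-\alpha')=0$, and the remaining Gamma factors are finite and nonzero. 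Note also that in your chart, with $D_1$ at the origin, even the global version of this computation is not automatic: the integer exponent then sits at the third point of $E_p$ and one would need additional order/degree bounds (in the spirit of \fref{prop:lower-bound}) to place it in the range where the Gamma quotient vanishes — which is exactly why the paper switches charts. In short, \fref{prop:residue-integral} is not a sanity check here; it is the proof.
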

\begin{proof}
Let us first begin with \( D_3 \) non-zero. In this case, we must also have \( D_1 \) or \( D_2 \) non-zero. We can assume, for instance, \( D_1 \) non-zero.

\vskip 2mm

By the definition of \( D_1, D_2, \dots, D_r \), if \( D_1 \) and \( D_3 \) are non-zero, \( D_3 \) is the only divisor crossing \( E_p \) in the \( V_p \) chart. In the \( (z_p, w_p) \) coordinates this means that \( u_2^s(z_p, 0) v_2(z_p, 0) \) has the form \( (1 - \lambda y_p)^{N_3 s + k_3} \) for some \( \lambda \in \mathbb{C}^* \). By Faà di Bruno's formula in \fref{lemma:faa-di-bruno} and \fref{eq:faa-di-bruno-2}, the \( \nu \)--th holomorphic and antiholomorphic derivatives of \( \Phi_2 \) at \( z_p, \bar{z}_p = 0, s = \sigma_{p, \nu} \) is an algebraic function involving the terms \( z_p^{k'} \bar{z}_p^{k} (1 - \lambda z_p)^{\epsilon_{3, \nu} - l'}(1 - \bar{\lambda} \bar{y}_p)^{\epsilon_{3, \nu} - l}\) with \( k, k', l,l' \in \mathbb{Z}_{\geq 0} \). Hence, the residue reduces to a finite sum involving the integrals \( R_{n, m}(\alpha, \beta) \) from \fref{eq:residue-integral-1} which, by \fref{prop:residue-integral}, are proportional to
\[ \frac{\Gamma(\alpha + 1) \Gamma(\beta + 1) \Gamma(\gamma + 1)}{\Gamma(- \alpha - n) \Gamma(- \beta - m) \Gamma(- \gamma - n - m)}, \quad n, m \in \mathbb{Z}. \]
Applying \fref{lemma:relation-epsilon} to this situation implies that, \( \epsilon_{1, \nu} + \epsilon_{3, \nu} + \kappa_p \nu + 2 = 0 \). Notice that \( \epsilon_{3, \nu} \not\in \mathbb{Z} \) implies \( \epsilon_{1, \nu} \not\in \mathbb{Z} \). Since the poles of \( \Gamma(z) \) are located at the negative integers, and \( \Gamma(z) \) has no zeros, \( \Gamma(\beta + 1), \Gamma(\gamma + 1), \Gamma(-\beta - m)^{-1}, \Gamma(-\gamma - n - m)^{-1} \in \mathbb{C}^* \). Finally, consider the non-negative integers \( k, k' \) in \( z_p^{k'} \bar{z}_p^{k} \) above. The quotient \( \Gamma(\alpha + 1)/\Gamma(- \alpha - n) \) is zero since \( \epsilon_{2, \nu} = 0 \), and \( \alpha + 1 = k + 1 > 0, -\alpha - n = -\alpha' = -k' \leq 0 \).

\vskip 2mm

If \( D_3 \) is zero and \( E_p \) is a non-rupture divisor, it may happen that \( D_2 \) is zero or not. If \( D_2 \) is not zero, it must cross \( E_p \) in the only point of \( V_p \) not in \( U_p \). In both cases, in the chart \( U_p \), none of the components \( u_1^s, v_1 \) of \( \Phi_1 \) will cross \( E_p \). In the \( (x_p, y_p) \) coordinates this means that, \( u_1^s(0, y_p) v_1(0, y_p) \in \mathbb{C}^*\) for all \( y_p \in \mathbb{C} \). Applying Faà di Bruno's formula again, the holomorphic and antiholomorphic derivatives of \( \Phi_1 \) restricted to \( x_p, \bar{x}_p = 0 \) are just polynomials in \( y_p \) and \( \bar{y}_p \). By \fref{cor:corollary-residue} and \fref{prop:residue-zero}, the residue is zero.
\end{proof}

\begin{example}
There are example where \( D_3 \neq 0 \) and \( \epsilon_{3, \nu} \in \mathbb{Z} \) and the corresponding \( \sigma_{p, \nu} \) has non-zero residue. For instance, \( f = (y^2 - x^3)(y - x^2)^3 \). Its resolution is given by
\[ F_\pi = 5E_{p_0} + 9E_{p_1} + 15E_{p_2} + C_1 + 3C_2, \quad K_\pi = E_{p_0} + 2E_{p_1} + 4E_{p_2}. \]
In this case, \( E_{p_1} \) is a non-rupture exceptional divisor with \( E_{p_2} \) and \( C_2 \) crossing \( E_{p_1} \). It can be checked that \( \sigma_{p_1, 0} = -1/3 \) is a pole of order two of \( f^s \). Here, \(D_1 = E_{p_2}, D_3 = C_2 \) and \( \epsilon_{1, 0} = \epsilon_{3, 0} = -1 \).
\end{example}

\section{The set of poles of the complex zeta function of a plane branch} \label{sec:poles-plane-branch}

In this section, we restrict our study of the poles of the complex zeta function to the case of plane branch singularities. Through the rest of this work we will fix a plane branch semigroup \( \Gamma = \langle \overline{\beta}_0, \overline{\beta}_1, \dots, \overline{\beta}_g \rangle \) and we stick to the notations of \fref{sec:semigroup}. Instead of taking any germ \( f : (\mathbb{C}^2, \boldsymbol{0}) \longrightarrow (\mathbb{C}, 0) \) with semigroup \( \Gamma \), we will work with the family \( \{f_{\boldsymbol{t}, \boldsymbol{\lambda}}\}_{\boldsymbol{\lambda} \in \mathbb{C}^{g-1}} \) presented in \fref{sec:monomial-curve}. Since this family contains at least one representative for each analytic type in the equisingularity class of the semigroup \( \Gamma \), we can give an optimal set of candidates for the poles of the complex zeta function \( f^s \) of all plane branches with semigroup \( \Gamma \). Furthermore, we will prove that if \( f_{gen} \) is generic among all branches with semigroup \( \Gamma \) (in the sense that the coefficients of a \( \mu \)-constant deformation are generic), all the candidates are indeed poles of \( f^s_{gen} \). As a corollary, we prove Yano's conjecture under the assumption that eigenvalues of the monodromy are pairwise different.

\subsection{Residues at rupture divisors} \label{sec:rupture-residues}
After \fref{thm:residue-non-rupture} in the preceding section, the only divisors that will contribute to poles of the complex zeta function of a plane branch will be rupture divisors and the divisor of the strict transform. Following the discussion in \fref{sec:toric-resolution}, a singular plane branch will have exactly \( g \geq 1 \) rupture divisors, denoted \( E_{p_1}, \dots, E_{p_g} \), where \( g \) is the number of characteristic exponents of \( f_{\boldsymbol{t}} \). This first observation reduces the list of candidate poles to \( \sigma_{i, \nu}, \nu \in \mathbb{Z} \) for \( i = 1, \dots, g \) contributed by \( E_{p_i} \), in addition to the negative integers corresponding to the strict transform \( \widetilde{C} \). With the notations from \fref{eq:numeric-data-semigroup},
\[ \sigma_{i, \nu} = - \frac{m_i + n_1 \cdots n_i + \nu}{n_i \overline{\beta}_i}, \quad \nu \in \mathbb{Z}_{\geq 0}. \]

We will basically use the same results about the residue presented in \fref{sec:residues-poles}. However, in this case, a better understanding of the total transform around the rupture divisors is needed. To that end, we will make use of \fref{prop:equations-curve}. The only thing that will differ from \fref{sec:residues-poles} is that, instead of computing the residue on the minimal resolution surface \( X' \), we proceed inductively and resolve up to the \( i \)--th rupture divisor, compute the residue on \( X^{(i)} \) and blow-up up to the \( (i+1) \)--th rupture divisor. This process simplifies the notation and fits more naturally with the toric resolutions from \fref{sec:toric-resolution}.

\vskip 2mm

First of all, we have to analyze the residue numbers from \fref{eq:definition-epsilons} in the case of a rupture divisor of a plane branch. In this case, \fref{eq:relation-epsilon} is \( \epsilon_{1, \nu} + \epsilon_{2, \nu} + \epsilon_{3} + \kappa_{p_i} \nu + 2 = 0 \), since rupture divisors of plane branches only have three divisor, \( D_1, D_2, D_3 \), crossing them. Since we will be working on \( X^{(i)} \), the divisor previously written as \( D_3 \) is the strict transform \( \widetilde{C} \) of \( f \) on \( X^{(i)} \) and thus, \(\epsilon_3 = e_i \sigma_{i, \nu} \), because \( E_{p_i} \cdot \widetilde{C} = e_i \). Similarly, on the surface \( X^{(i)} \), \( \kappa_{p_i} = - E_{p_i} \cdot E_{p_i} = 1 \). Hence,
\begin{equation}
\epsilon_{1, \nu} + \epsilon_{2, \nu} + e_i \sigma_{i, \nu} + \nu + 2 = 0.
\end{equation}
It is then enough to study the relation of \( \epsilon_{1, \nu}, \epsilon_{2, \nu}, \nu \in \mathbb{Z}_{\geq 0} \) with the semigroup \( \Gamma \). We point out that Lichtin \cite{lichtin85} studied the residue numbers \( \epsilon_{1, \nu}, \epsilon_{2, \nu} \) for the case \( \nu = 0 \). Using the notations and definitions from \fref{sec:semigroup},
\begin{proposition}[{\cite[Prop. 2.12]{lichtin85}}] \label{prop:lichtin}
The residue numbers \( \epsilon_{1, 0}, \epsilon_{2, 0} \) associated to a rupture divisor \( E_{p_i} \) of plane branch are given by
\[ \epsilon_{1, 0} + 1 = \frac{1}{n_i}, \quad \epsilon_{2, 0} + 1 = \frac{m_{i-1} - n_{i-1}\overline{m}_{i-1} + n_1 \cdots n_{i-1}}{\overline{m}_i}. \]
\end{proposition}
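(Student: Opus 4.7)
The approach is to read off the two multiplicities $N_1, N_2$ directly from \fref{prop:equations-curve}, compute the corresponding multiplicities $k_1, k_2$ in the relative canonical divisor $K_{\pi^{(i)}}$ by an inductive analysis of the pullback of $dx \wedge dy$ through the toric tower $\pi^{(i)} = \pi_i \circ \cdots \circ \pi_1$, and finally assemble the residue numbers via the B\'ezout identities $n_i b_i - a_i q_i = 1$, $c_i q_i - n_i d_i = 1$, and the semigroup recursion \fref{eq:reduced-generators-semigroup}.

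From the total transform in \fref{prop:equations-curve} one immediately reads off $N_1 = a_i \overline{\beta}_i$ (coefficient of $y_i$ in the $U_i$ chart) and $N_2 = (c_i n_{i-1} \overline{m}_{i-1} + d_i) e_{i-1}$ (coefficient of $z_i$ in the $V_i$ chart). For the canonical multiplicities $k_1, k_2$ I argue by induction on $i$. The free point of $\pi_i$ on $X^{(i-1)}$ lies on $E_{p_{i-1}}$ away from every other component of $F_{\pi^{(i-1)}}$; the analytic coordinates making $\pi_i$ toric are $\bar{y}_{i-1} = \widetilde{f}_i$ (the strict transform of the $i$-th maximal contact element) and $\bar{x}_{i-1}$ a local equation of $E_{p_{i-1}}$. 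The inductive hypothesis then reads
$$ \pi^{(i-1)*}(dx \wedge dy) = \bar{x}_{i-1}^{k_{p_{i-1}}} \cdot u \cdot d\bar{x}_{i-1} \wedge d\bar{y}_{i-1}, \qquad u \text{ a unit}, $$
with $k_{p_{i-1}} + 1 = m_{i-1} + n_1 \cdots n_{i-1}$ from \fref{eq:numeric-data-semigroup} and the base case $\pi^{(0)} = \mathrm{id}$ giving $k_{p_0} = 0$. Composing with the toric $\pi_i(x_i, y_i) = (x_i^{n_i} y_i^{a_i}, x_i^{q_i} y_i^{b_i})$, whose Jacobian determinant equals $x_i^{n_i+q_i-1} y_i^{a_i+b_i-1}$ by $n_i b_i - a_i q_i = 1$, yields
$$ \pi^{(i)*}(dx \wedge dy) = x_i^{n_i(k_{p_{i-1}}+1) + q_i - 1}\, y_i^{a_i(k_{p_{i-1}}+1) + b_i - 1}\, (\text{unit})\, dx_i \wedge dy_i. $$
This simultaneously reproduces $k_{p_i} + 1 = m_i + n_1\cdots n_i$ (via $q_i = m_i - n_i m_{i-1}$, closing the induction) and produces $k_1 + 1 = a_i(k_{p_{i-1}} + 1) + b_i$. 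The parallel argument in the $V_i$ chart, with $\pi_i(z_i, w_i) = (z_i^{c_i} w_i^{n_i}, z_i^{d_i} w_i^{q_i})$, gives $k_2 + 1 = c_i(k_{p_{i-1}} + 1) + d_i$.

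Substituting into $\epsilon_{j, 0} + 1 = N_j \sigma_{i, 0} + k_j + 1$ with $\sigma_{i, 0} = -(m_i + n_1\cdots n_i)/(n_i \overline{\beta}_i)$ and $\overline{\beta}_i = e_i \overline{m}_i$, the $\epsilon_{1, 0}$ computation is immediate: the $a_i n_1 \cdots n_i$ contributions cancel and what remains equals $(n_i b_i - a_i q_i)/n_i = 1/n_i$. For $\epsilon_{2, 0}$ one first uses $e_{i-1}/(n_i e_i) = 1$ to simplify $N_2 \sigma_{i, 0}$; then, grouping the numerator of $\epsilon_{2, 0} + 1$ by $c_i$ and $d_i$ and applying the identity
$$ \overline{m}_i m_{i-1} - n_{i-1}\overline{m}_{i-1} m_i = q_i (m_{i-1} - n_{i-1}\overline{m}_{i-1}), $$
which is a direct rearrangement of \fref{eq:reduced-generators-semigroup}, the entire numerator factors as $(c_i q_i - n_i d_i)(m_{i-1} - n_{i-1}\overline{m}_{i-1} + n_1\cdots n_{i-1})$, which equals $m_{i-1} - n_{i-1}\overline{m}_{i-1} + n_1\cdots n_{i-1}$ by B\'ezout. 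The main obstacle is the inductive step for the Jacobian: one must verify that the coordinate $\bar{x}_{i-1}$ chosen to make $\pi_i$ toric really is a local equation of $E_{p_{i-1}}$ at the free point, and thus carries multiplicity $k_{p_{i-1}}$ in $K_{\pi^{(i-1)}}$. This geometric fact is built into the toric description of \fref{sec:toric-resolution}; once in place, the rest is B\'ezout and the semigroup recursion.
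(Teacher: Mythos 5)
Your proposal is correct; I checked both final computations (the cancellation giving \( (n_ib_i - q_ia_i)/n_i = 1/n_i \), and the factorization of the numerator of \( \epsilon_{2,0}+1 \) as \( (q_ic_i - n_id_i)(m_{i-1} - n_{i-1}\overline{m}_{i-1} + n_1\cdots n_{i-1}) \) via \( \overline{m}_i = n_in_{i-1}\overline{m}_{i-1} + q_i \) and \( q_i = m_i - n_im_{i-1} \)) and they go through. Note, however, that the paper itself offers no proof of this statement: it is quoted from Lichtin \cite[Prop.~2.12]{lichtin85}, so there is no internal argument to compare against. What you have written is a self-contained derivation inside the paper's own toric framework: you read \( N_1 = a_i\overline{\beta}_i \) and \( N_2 = (c_in_{i-1}\overline{m}_{i-1}+d_i)e_{i-1} \) off \fref{prop:equations-curve} (exactly as the paper does later in the proofs of \fref{cor:lichtin} and \fref{thm:plane-branch-candidates}), and you obtain \( k_1, k_2 \) by pulling back \( dx\wedge dy \) through the toric tower, using that the center of \( \pi_i \) is a free point of \( E_{p_{i-1}} \) and that \( \bar{x}_{i-1} = x_{i-1}u_{i-1} \) is a local equation of \( E_{p_{i-1}} \) there (this is stated in the proof of \fref{prop:equations-curve}, so the ``main obstacle'' you flag is indeed already supplied by the paper); as a byproduct your induction re-derives \( k_{p_i}+1 = m_i + n_1\cdots n_i \) from \fref{eq:numeric-data-semigroup}, so the argument is not circular even though you also cite that formula. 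Two cosmetic remarks: writing \( k_{p_0} = 0 \) for the base case clashes with the paper's Section~4 convention where \( p_0 = \boldsymbol{0} \) labels the first blown-up point (better to say \( K_{\pi^{(0)}} = 0 \) since \( \pi^{(0)} = \mathrm{id} \)); and for \( i = 1 \) the stated formula must be read with the conventions \( n_0 = m_0 = 0 \), \( \overline{m}_0 = 1 \) and empty product equal to \( 1 \), giving \( \epsilon_{2,0}+1 = 1/\overline{m}_1 \), which is consistent with how the paper uses the case \( i=1 \) in the proof of \fref{thm:plane-branch-candidates}.
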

\begin{corollary} \label{cor:lichtin}
For any \( \nu \in \mathbb{Z}_{\geq 0} \), the residue numbers are
\[ \epsilon_{1, \nu} + 1 = -\frac{a_i}{n_i} \nu + \frac{1}{n_i}, \quad \epsilon_{2, \nu} + 1 = -\frac{c_i n_{i-1} \overline{m}_{i-1} + d_i}{\overline{m}_i} \nu + \frac{m_{i-1} - n_{i-1} \overline{m}_{i-1} + n_1 \cdots n_{i-1}}{\overline{m}_{i}}. \]
\end{corollary}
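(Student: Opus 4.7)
The plan is to reduce the statement to the case \( \nu = 0 \) already handled in \fref{prop:lichtin}, by extracting the linear dependence of \( \epsilon_{1,\nu} \) and \( \epsilon_{2, \nu} \) on \( \nu \) directly from their definition and then reading off the coefficients from the resolution data computed in \fref{prop:equations-curve}.

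Concretely, I would first unwind the definition \( \epsilon_{j, \nu} = N_j \sigma_{p_i, \nu} + k_j \) together with \( \sigma_{p_i, \nu} = -(k_{p_i} + 1 + \nu)/N_{p_i} \) to obtain the identity
\[
\epsilon_{j, \nu} + 1 = (\epsilon_{j, 0} + 1) - \frac{N_j}{N_{p_i}} \nu, \qquad j = 1, 2,
\]
so that the \( \nu \)--dependence is entirely controlled by the ratios \( N_1/N_{p_i} \) and \( N_2/N_{p_i} \). The constant terms are then supplied by \fref{prop:lichtin}.

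Next I would compute these ratios using the explicit local equations from \fref{prop:equations-curve} for the total transform of \( f_{\boldsymbol{t}} \) near the rupture divisor \( E_{p_i} \) on \( X^{(i)} \). Reading off \fref{eq:prop-equations-curve-1}, one has \( N_{p_i} = n_i \overline{\beta}_i \), \( N_1 = a_i \overline{\beta}_i \) in the \( U_i \) chart, and \( N_2 = (c_i n_{i-1}\overline{m}_{i-1} + d_i) e_{i-1} \) in the \( V_i \) chart. Using \( n_i \overline{\beta}_i = \overline{m}_i e_{i-1} \) (which follows from \( \overline{\beta}_i = \overline{m}_i e_i \) and \( e_{i-1} = n_i e_i \)), one immediately obtains
\[
\frac{N_1}{N_{p_i}} = \frac{a_i}{n_i}, \qquad \frac{N_2}{N_{p_i}} = \frac{c_i n_{i-1}\overline{m}_{i-1} + d_i}{\overline{m}_i}.
\]

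Substituting these ratios together with the \( \nu = 0 \) values from \fref{prop:lichtin} into the identity above yields the two formulas in the statement. There is no real obstacle here: the whole argument is a bookkeeping exercise, the only mildly non-routine point being the simplification \( n_i \overline{\beta}_i = \overline{m}_i e_{i-1} \) that makes the factor \( e_{i-1} \) in \( N_2 \) cancel against \( N_{p_i} \). The hardest part conceptually has already been absorbed into \fref{prop:equations-curve} (which supplies the correct exponents on \( y_i \) and \( z_i \)) and into \fref{prop:lichtin} (which supplies the base case).
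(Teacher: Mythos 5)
Your proposal is correct and follows essentially the same route as the paper: unwind the definition \( \epsilon_{j,\nu} = N_j\sigma_{p_i,\nu} + k_j \) to see the linear dependence on \( \nu \), take the constant term from \fref{prop:lichtin}, and read \( N_1 = a_i\overline{\beta}_i \), \( N_2 = (c_i n_{i-1}\overline{m}_{i-1}+d_i)e_{i-1} \), \( N_{p_i} = n_i\overline{\beta}_i \) from \fref{prop:equations-curve}, with the cancellation \( n_i\overline{\beta}_i = \overline{m}_i e_{i-1} \) giving the stated coefficients.
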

\begin{proof}
The result follows from the definition of \( \epsilon_{1, \nu}, \epsilon_{2, \nu} \) in \fref{eq:definition-epsilons} and using \fref{prop:lichtin} together with the expression of \( N_1 \) and \( N_2 \) given in \fref{prop:equations-curve}.
\end{proof}
Secondly, we focus our attention on the functions \( \Phi_1, \Phi_2 \) from \fref{prop:integral-along-divisor} but now in the case of the family \( f_{\boldsymbol{t}} \). Around the rupture divisor \( E_{p_i} \) on \( X^{(i)} \), we have that \( \Phi_{1, \boldsymbol{t}} = |u_1|^{2s} |v_1|^2 |\tilde{f}_{\boldsymbol{t}}|^{2s} (\pi^{(i)})^* \varphi \restr{U_i} \) and \( \Phi_{2, \boldsymbol{t}} = |u_2|^{2s} |v_2|^s |\tilde{f}_{\boldsymbol{t}}|^{2s} (\pi^{(i)})^* \varphi \restr{V_i} \).
Since the only factor of \( \Phi_{1, \boldsymbol{t}}, \Phi_{2, \boldsymbol{t}} \) crossing \( E_{p_i} \) is the strict transform \( \tilde{f}_{\boldsymbol{t}} \), by Faà di Bruno's formula in \fref{lemma:faa-di-bruno} applied to the equations in \fref{prop:equations-curve}, the \( \nu \)--th derivatives of \( \Phi_{1, \boldsymbol{t}}, \Phi_{2, \boldsymbol{t}} \) at \( x_p, \bar{x}_p = 0, w_p, \bar{w}_p = 0 \) with \( s = \sigma_{i, \nu} \) are a finite sum with summands that look like
\[ y_p^{k'} \bar{y}_p^{k} (y_i - \lambda_i)^{e_i(\sigma_{i, \nu} - l')} (\bar{y}_i - \bar{\lambda}_i)^{e_i(\sigma_{i, \nu} - l)}, \quad z_p^{k'} \bar{z}_p^{k} (1 - \lambda_i z_p)^{e_i(\sigma_{i, \nu} - l')} (1 - \bar{\lambda}_i \bar{z}_p)^{e_i(\sigma_{i, \nu} - l)}, \]
with \( k', k, l', l \in \mathbb{Z}_{\geq 0} \). Therefore, it make sense to consider the order and the degree of \( y_i, z_i \) in \( \Phi_{1, \boldsymbol{t}}, \Phi_{2, \boldsymbol{t}} \). On the \( U_i \) chart, they will be denoted by
\[
0 \leq  \ord_{y_i} \frac{\partial^{\nu} \Phi_{1, \boldsymbol{t}}}{\partial x^{\nu} \partial \bar{x}^\nu}(0, y_i; \sigma_{i, \nu}) \leq \deg_{y_i} \frac{\partial^{\nu} \Phi_{1, \boldsymbol{t}}}{\partial x^{\nu} \partial \bar{x}^\nu}(0, y_i; \sigma_{i, \nu}),
\]
and respectively on the \( V_i \) chart. By the symmetry of the holomorphic and antiholomorphic parts, the orders and degrees are exactly the same if considered with respect to the conjugated variables \( \bar{y}_i, \bar{z}_i \). By convention the order and degree of zero are \( + \infty \) and \( 0 \), respectively. Let us first present some technical lemmas.

\begin{lemma} \label{lemma:lower-bound-1}
Let \( f_1, \dots, f_n \in C^\infty(\mathbb{C}^2) \) be functions such that
\[ \alpha \nu \leq \displaystyle \ord_{y} \frac{\partial^\nu f_i}{\partial x^\nu}(0, y), \quad \deg_{y} \frac{\partial^\nu f_i}{\partial x^\nu}(0, y) \leq \beta \nu, \quad \textrm{for} \quad i = 1, \dots, n, \]
and \( \alpha, \beta \in \mathbb{Q}_{>0} \). Then,
\[ \alpha \nu \leq \ord_{y} \frac{\partial^\nu (f_1 \cdots f_n)}{\partial x^\nu}(0, y), \qquad \deg_{y} \frac{\partial^\nu (f_1 \cdots f_n)}{\partial x^\nu}(0, y) \leq \beta \nu,\]
and,
\[\alpha \nu \leq \ord_{y} \frac{\partial^\nu f_1^s}{\partial x^\nu}(0, y), \quad \deg_{y} \frac{\partial^\nu f_1^s}{\partial x^\nu}(0, y) \leq \beta \nu \quad \textrm{for all} \quad s \in \mathbb{C}. \]
\end{lemma}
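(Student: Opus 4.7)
I plan to prove the product bound and the power bound separately. Both rely on the elementary facts that for functions of $y$,
\[
\ord_y(gh) \ge \ord_y g + \ord_y h, \qquad \deg_y(gh) \le \deg_y g + \deg_y h,
\]
while $\ord_y(g + h) \ge \min\{\ord_y g, \ord_y h\}$ and $\deg_y(g + h) \le \max\{\deg_y g, \deg_y h\}$, with the usual conventions on the zero function. The product bound will then follow from Leibniz's rule together with an induction on $n$; the power bound will follow from Faà di Bruno combined with a short observation about $f_1(0, y)$.

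For the product I induct on $n$, the case $n = 1$ being trivial. In the case $n = 2$, Leibniz's rule yields
\[
\frac{\partial^\nu(f_1 f_2)}{\partial x^\nu}(0, y) = \sum_{k = 0}^{\nu} \binom{\nu}{k}\, \frac{\partial^k f_1}{\partial x^k}(0, y)\, \frac{\partial^{\nu - k} f_2}{\partial x^{\nu - k}}(0, y),
\]
and applying the hypothesis to $f_1$ at order $k$ and to $f_2$ at order $\nu - k$ shows that each summand has order $\ge \alpha k + \alpha(\nu - k) = \alpha \nu$ and degree $\le \beta \nu$ in $y$; the bounds transfer to the sum by the elementary inequalities above. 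In the inductive step I write $f_1 \cdots f_n = (f_1 \cdots f_{n - 1}) f_n$, and the inductive hypothesis furnishes the same bounds at every order for the $(n - 1)$-fold product, so the $n = 2$ argument applies.

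For the power I apply Faà di Bruno's formula (the lemma quoted earlier in the paper) with outer function $g(u) = u^s$ and inner function $h(x) = f_1(x, y)$, treating $y$ as a parameter; since $g^{(k)}(u) = (s)_k\, u^{s-k}$, this gives
\[
\frac{\partial^\nu f_1^s}{\partial x^\nu}(0, y) = \sum_{k = 1}^{\nu} (s)_k\, f_1(0, y)^{s - k}\, B_{\nu, k}\!\left( \frac{\partial f_1}{\partial x}(0, y), \ldots, \frac{\partial^{\nu - k + 1} f_1}{\partial x^{\nu - k + 1}}(0, y) \right).
\]
Each monomial of $B_{\nu, k}$ has the form $\prod_l \bigl(\partial_x^l f_1(0, y)\bigr)^{j_l}$ with $\sum_l j_l = k$ and $\sum_l l\, j_l = \nu$; by the product part already proved, such a monomial has order $\ge \alpha \sum_l l\, j_l = \alpha \nu$ and degree $\le \beta \nu$ in $y$. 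The one remaining ingredient, and the main subtlety worth flagging, is the factor $f_1(0, y)^{s - k}$: specializing the hypothesis to $\nu = 0$ forces $\deg_y f_1(0, y) \le 0$, so $f_1(0, y)$ is constant in $y$; in the setting where the lemma is later applied $f_1$ is moreover a local unit, so this constant is nonzero and $f_1(0, y)^{s - k}$ is a well-defined scalar contributing neither order nor degree. Multiplying by the scalar $(s)_k$ and summing over $k$ preserves the required bounds, concluding the argument.
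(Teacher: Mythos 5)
Your proposal is correct and follows essentially the same route as the paper: the product bounds via the (general) Leibniz rule and the power bounds via Faà di Bruno, using the homogeneity condition \( j_1 + 2j_2 + \cdots + (\nu-k+1)j_{\nu-k+1} = \nu \) on the Bell polynomial monomials. Your extra observation that the hypothesis at \( \nu = 0 \) forces \( f_1(0,y) \) to be a constant — nonzero in the situations where the lemma is applied, so that \( f_1(0,y)^{s-k} \) is a harmless scalar — is a detail the paper's two-line proof leaves implicit, and it is handled correctly.
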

\begin{proof}
The first inequalities follow from the general Leibniz rule,
\[ \frac{\partial^\nu (f_1 \cdots f_n)}{\partial x^\nu}(0, y) = \sum_{k_1 + \cdots + k_n = \nu} \binom{n}{k_1, \dots, k_n} \frac{\partial^{k_1} f_1}{\partial x^{k_1}}(0, y) \cdots \frac{\partial^{k_n} f_n}{\partial x^{k_n}}(0, y). \]
Similarly, the second follow from Faà di Bruno's formula and the definition of the partial exponential Bell polynomials, see \fref{lemma:faa-di-bruno} and \fref{eq:faa-di-bruno-2}. Specifically, they follow from the second part of \fref{eq:bell-polynomials-2}.
\end{proof}

\begin{lemma} \label{lemma:lower-bound-2}
Let \( f(x, y) \in C^\infty(\mathbb{C}^2) \) and let \( \pi(x, y) = (x^n y^a, x^m y^b) \) with \(n, m, a, b \in \mathbb{Z}_{\geq 0} \). Furthermore, assume that \( n b - m a \geq 0 \). Then,
\[ \frac{a}{n} \nu \leq \ord_{y} \frac{\partial^\nu ( f \circ \pi )}{\partial x^\nu} (0, y), \quad \deg_{y} \frac{\partial^\nu(f \circ \pi)}{\partial x^\nu}(0, y) \leq \frac{b}{m}\nu \quad \textrm{for all} \quad \nu \in \mathbb{Z}_{\geq 0}. \]
\end{lemma}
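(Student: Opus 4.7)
The plan is to reduce to the case of a single Taylor monomial via a Taylor approximation argument, and then to carry out a direct exponent-counting computation. Throughout I assume $n, m \geq 1$ (otherwise the bounds $a\nu/n, b\nu/m$ are either trivial or require separate interpretation).

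Fix $\nu$, and for each $N \geq \nu + 1$ write $f = T_N + R_N$, where $T_N$ is the $N$-th order Taylor polynomial of $f$ at the origin (in the four real variables of $\mathbb{C}^2$) and $R_N \in C^\infty$ vanishes to order at least $N + 1$ there. I would first show that $\partial_x^\nu(R_N \circ \pi)(0, y) = 0$ for $N$ sufficiently large. Applying Fa\`a di Bruno's formula (\fref{lemma:faa-di-bruno}) to $R_N \circ \pi$, every summand is a product of partial derivatives of $R_N$ evaluated at $\pi(x, y)$ times polynomial factors in $x, y$ coming from the derivatives of $\pi^* u = x^n y^a$ and $\pi^* v = x^m y^b$. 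Since each $k$-th partial derivative of $R_N$ still vanishes to order at least $N + 1 - k$ at the origin, a counting argument with $N \gg \nu$ forces every summand to retain a strictly positive power of $x$, and hence to vanish at $x = 0$.

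For the polynomial part $T_N \circ \pi$, by linearity it is enough to treat a single Taylor monomial. Its pullback under $\pi$ is a monomial whose $x$-exponent is $ni + mj$ and $y$-exponent is $ai + bj$, where $(i, j) \in \mathbb{Z}_{\geq 0}^2$ indexes the holomorphic part. The $\nu$-th derivative $\partial_x^\nu$ evaluated at $x = 0$ equals $\nu!$ times the corresponding $y$-monomial when $ni + mj = \nu$ and zero otherwise. Summing over Taylor coefficients yields
\[ \frac{\partial^\nu (f \circ \pi)}{\partial x^\nu}(0, y) = \nu! \sum_{(i, j) \in S_\nu} c_{ij}\, y^{ai + bj}, \]
up to antiholomorphic factors, where $S_\nu := \{(i, j) \in \mathbb{Z}_{\geq 0}^2 : ni + mj = \nu\}$ is finite because $n, m \geq 1$. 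In particular the derivative is a polynomial in $y$, so $\ord_y$ and $\deg_y$ are well defined.

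The bounds then follow from the elementary identities
\[ n(ai + bj) = a(ni + mj) + (nb - ma)\, j = a\nu + (nb - ma)\, j, \]
\[ m(ai + bj) = b(ni + mj) - (nb - ma)\, i = b\nu - (nb - ma)\, i, \]
valid for any $(i, j) \in S_\nu$. The hypothesis $nb - ma \geq 0$ together with $i, j \geq 0$ immediately gives $\tfrac{a}{n}\nu \leq ai + bj \leq \tfrac{b}{m}\nu$, which translates into the claimed order and degree inequalities. The main obstacle is the first step, where one must rigorously verify that the smooth remainder $R_N$ contributes nothing at $x = 0$: because $\pi$ carries nontrivial $y$-dependence, the bookkeeping in Fa\`a di Bruno's formula must be done carefully enough to ensure that sufficient $x$-power survives in every summand, independently of how derivatives redistribute between the $x$- and $y$-factors of $\pi^* u$ and $\pi^* v$.
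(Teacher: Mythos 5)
Your proposal is correct and follows essentially the same route as the paper's proof: Taylor expansion of \( f \) at the origin, the observation that after pulling back by \( \pi \) and applying \( \partial_x^\nu \) at \( x = 0 \) only the monomials with \( ni + mj = \nu \) survive, and then the exponent inequalities coming from \( nb - ma \geq 0 \); your identities \( n(ai+bj) = a\nu + (nb-ma)j \) and \( m(ai+bj) = b\nu - (nb-ma)i \) are exactly the substitutions the paper performs. The one step you flag as ``the main obstacle,'' namely \( \partial_x^\nu (R_N \circ \pi)(0, y) = 0 \), is in fact routine and needs no bookkeeping of surviving \( x \)-powers: since \( n, m \geq 1 \) we have \( \pi(0, y) = (0, 0) \), and by the chain rule (Fa\`a di Bruno) \( \partial_x^\nu (R_N \circ \pi)(0, y) \) is a finite sum of partial derivatives of \( R_N \) of order at most \( \nu \), each evaluated at the origin and multiplied by polynomial factors; all of these derivatives vanish as soon as \( N \geq \nu \), because \( R_N \) vanishes to order \( N + 1 \). (The paper sidesteps even this by writing the remainder with an explicit monomial factor, so that its pullback carries \( x^{(n+m)\tau} \) with \( (n+m)\tau > \nu \) and is killed by \( \nu \)-fold differentiation at \( x = 0 \).) Your standing assumption \( n, m \geq 1 \) is harmless, since that is the only case arising in the application (there \( n = n_i \geq 2 \) and \( m = q_i \geq 1 \)). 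With the one-sentence closure of the remainder step, your argument is complete and coincides with the paper's.
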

\begin{proof}
Consider the Taylor expansion of \( f \) at the origin of order \(\tau > \nu\),
\[ f(x, y) = \sum_{i, j = 0}^{\tau-1} \frac{\partial^{i+j} f}{\partial x^i \partial y^i}(0,0) \frac{x^i}{i!} \frac{y^j}{j!} + R_\tau(x, y) x^\tau y^\tau, \]
where \( R_\tau \) is the residual. Composing with \( \pi \), we get
\[ f(\pi(x, y)) = \sum_{i, j = 0}^{\tau-1} \frac{\partial^{i+j} f}{\partial x^i \partial y^i}(0,0) \frac{x^{ni+mj}}{i!} \frac{y^{ai + bj}}{j!} + R_\tau(x^ny^a, x^my^b) x^{(n+m)\tau} y^{(a+b)\tau}. \]
At the \( \nu \)--th derivative of this Taylor polynomial with respect to \( x \) and restricted to \( x = 0 \), we must have that \( n i + m j = \nu \) for some integers \( i, j \geq 0 \). Notice that if there are no such \( i, j \geq 0 \) the derivative is zero, and the bounds are trivially fulfilled. Finally,
\[ \begin{split}
\ord_{y} \frac{\partial^\nu (f \circ \pi)}{\partial x^\nu} (0, y) & = \min_{n i + m j = \nu} \{ ai + bj \} \geq \min_{j \geq 0} \left\{ \frac{a}{n}\nu + \frac{(nb - ma)j}{n} \right\} \geq \frac{a}{n} \nu, \\
\deg_{y} \frac{\partial^\nu(f \circ \pi)}{\partial x^\nu}(0, y) & = \max_{n i + m j = \nu} \{a i + b j\} \leq \max_{i \geq 0} \left\{ \frac{b}{m} \nu - \frac{(nb - ma)i}{m} \right\} \leq \frac{b}{m} \nu,
\end{split} \]
where in the first inequality of each equation used that \( i = (\nu - mj)/n \) and \( j = (\nu - ni)/m \), respectively.
\end{proof}

Recall the linear forms \( \rho_{j+1}^{(i)}(\underline{k}), A_{j+1}^{(i)}(\underline{k}), C_{j+1}^{(i)}(\underline{k}) \) from \fref{prop:equations-curve}.

\begin{lemma} \label{lemma:lower-bound-3}
For any \( \nu \in \mathbb{Z}_{\geq 0} \), we have
\[
\frac{a_i}{n_i}\nu \leq \hspace{-4pt} \min_{\rho_{j+1}^{(i)}(\underline{k}) = \nu} A^{(i)}_{j+1}(\underline{k}), \qquad \frac{c_i n_{i-1} \overline{m}_{i-1} + d_i}{\overline{m}_i} \nu \leq \hspace{-4pt} \min_{\rho_{j+1}^{(i)}(\underline{k}) = \nu} C^{(i)}_{j+1}(\underline{k}),
\]
\[
\max_{\rho_{j+1}^{(i)}(\underline{k}) = \nu} A^{(i)}_{j+1}(\underline{k}) \leq \frac{a_i n_{i-1} \overline{m}_{i-1} + b_i}{\overline{m}_i} \nu + n_{i+1} \cdots n_j, \quad \hspace{-4pt} \max_{\rho_{j+1}^{(i)}(\underline{k}) = \nu} C^{(i)}_{j+1}(\underline{k}) \leq \frac{c_i}{n_i} \nu + n_{i+1} \cdots n_{j}.
\]
\end{lemma}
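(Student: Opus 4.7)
The plan is to reduce all four inequalities to a comparison of the per-coordinate coefficient ratios of the linear forms $\rho_{j+1}^{(i)}, A_{j+1}^{(i)}, C_{j+1}^{(i)}$, followed by the two key identities $n_i b_i - q_i a_i = 1$ and $q_i c_i - n_i d_i = 1$ coming from \fref{eq:equations-toric-resolution}. Concretely, I would first strip the constant terms by setting
\[
\widetilde{\rho}(\underline{k}) := \rho_{j+1}^{(i)}(\underline{k}) + n_i \cdots n_j \overline{m}_i, \quad \widetilde{A}(\underline{k}) := A_{j+1}^{(i)}(\underline{k}) + a_i \overline{m}_i n_{i+1}\cdots n_j,
\]
and similarly for $\widetilde{C}$. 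These are homogeneous linear forms with non-negative coefficients, so on the orthant $\{k_l \geq 0\}$ with $\widetilde{\rho}$ fixed, the extrema of $\widetilde{A}$ (respectively $\widetilde{C}$) are controlled by the minimum and maximum of the coefficient ratios $\widetilde{A}_l / \widetilde{\rho}_l$ (respectively $\widetilde{C}_l / \widetilde{\rho}_l$), where the subscript $l$ denotes the coefficient of $k_l$.

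Reading the expressions in \fref{eq:prop-equations-curve-4}, the ratio $\widetilde{A}_l / \widetilde{\rho}_l$ equals $a_i/n_i$ for every $l \neq i$ and equals $(a_i n_{i-1} \overline{m}_{i-1} + b_i)/\overline{m}_i$ for $l = i$; analogously, $\widetilde{C}_l / \widetilde{\rho}_l$ equals $(c_i n_{i-1} \overline{m}_{i-1} + d_i)/\overline{m}_i$ for $l \geq i$ and equals $c_i/n_i$ for $l < i$. The core computation is then to decide which of these two values is smaller in each case. Using \fref{eq:reduced-generators-semigroup} to expand $\overline{m}_i$, together with $q_i = m_i - n_i m_{i-1}$ and the Bézout relations $n_i b_i - q_i a_i = 1$ and $q_i c_i - n_i d_i = 1$, one finds the two crucial equalities
\[
n_i (a_i n_{i-1} \overline{m}_{i-1} + b_i) - a_i \overline{m}_i = 1, \qquad c_i \overline{m}_i - n_i (c_i n_{i-1} \overline{m}_{i-1} + d_i) = 1.
\]
These say that for $\widetilde{A}/\widetilde{\rho}$ the minimum ratio is $a_i/n_i$ and the maximum is $(a_i n_{i-1} \overline{m}_{i-1} + b_i)/\overline{m}_i$, while for $\widetilde{C}/\widetilde{\rho}$ the min is $(c_i n_{i-1} \overline{m}_{i-1} + d_i)/\overline{m}_i$ and the max is $c_i/n_i$.

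To finish, I substitute $\widetilde{\rho}(\underline{k}) = \nu + n_i \cdots n_j \overline{m}_i$ into the extremal linear bounds and then subtract the corresponding constant shifts $a_i \overline{m}_i n_{i+1} \cdots n_j$ (for $A$) or $(c_i n_{i-1} \overline{m}_{i-1} + d_i) n_i \cdots n_j$ (for $C$). For the two lower bounds the contribution at $\nu = 0$ vanishes identically, leaving exactly $(a_i/n_i)\nu$ and $((c_i n_{i-1} \overline{m}_{i-1} + d_i)/\overline{m}_i)\nu$. For the two upper bounds the constant pieces collapse, via the identity $n_{i+1}\cdots n_j \cdot 1$, into the extra summand $n_{i+1}\cdots n_j$ appearing in the statement. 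The only step requiring genuine care is the algebraic manipulation yielding the two boxed equalities above; once those are established, the rest is purely a comparison of positive rationals and a clean linear substitution.
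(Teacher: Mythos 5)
Your argument is correct: the per-variable coefficient ratios of the homogenized forms are exactly as you state ($a_i/n_i$ for $l\neq i$ versus $(a_in_{i-1}\overline{m}_{i-1}+b_i)/\overline{m}_i$ at $l=i$ for $A^{(i)}_{j+1}$, and $c_i/n_i$ for $l<i$ versus $(c_in_{i-1}\overline{m}_{i-1}+d_i)/\overline{m}_i$ for $l\geq i$ for $C^{(i)}_{j+1}$), your two boxed identities reduce, via $\overline{m}_i=n_in_{i-1}\overline{m}_{i-1}+q_i$ from \fref{eq:reduced-generators-semigroup}, to the Bézout relations $n_ib_i-q_ia_i=1$ and $q_ic_i-n_id_i=1$, and the constant bookkeeping at the end does produce the shift $0$ for the two lower bounds and $n_{i+1}\cdots n_j$ for the two upper bounds. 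Where you diverge from the paper is in how the upper bounds are obtained: the paper only performs the "minimal ratio'' computation, phrased as eliminating $k_0$ (for $A^{(i)}_{j+1}$) or $k_i$ (for $C^{(i)}_{j+1}$) from the constraint $\rho^{(i)}_{j+1}(\underline{k})=\nu$ — algebraically the same step as your minimum comparison — and then deduces both maxima from the complementary relation of \fref{lemma:lemma-ac1} together with $a_i+c_i=n_i$, $b_i+d_i=q_i$, so that the upper bound for $A$ comes from the lower bound for $C$ and vice versa. Your weighted-mean argument treats all four inequalities uniformly, uses the second Bézout identity directly instead of \fref{lemma:lemma-ac1}, and makes transparent which coordinate is responsible for the slack (only $k_i$ for $A$, only the $k_l$ with $l<i$ for $C$); the paper's route buys a shorter computation by recycling the minima and never evaluating the second family of ratios. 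One presentational point: spell out the elementary inequality you are invoking, namely that for homogeneous forms with all $\widetilde{\rho}_l>0$ one has, on the orthant $\{k_l\geq 0\}$, $\min_l(\widetilde{A}_l/\widetilde{\rho}_l)\,\widetilde{\rho}(\underline{k})\leq \widetilde{A}(\underline{k})\leq \max_l(\widetilde{A}_l/\widetilde{\rho}_l)\,\widetilde{\rho}(\underline{k})$, since it is the engine of the whole argument and covers the degenerate case where the constraint set is empty or misses some coordinates.
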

\begin{proof}
For the first inequality, solve for \( k_0 \) in the constrain \( \rho_{j+1}^{(i)}(k_0, \dots, k_j) = \nu \) and substitute in \( A_{j+1}^{(i)}(\underline{k}) \). After some cancellations we get
\begin{equation} \label{eq:eqn1}
\frac{a_i}{n_i} \nu + a_i n_{i-1} \overline{m}_{i-1} k_i + b_i k_i - \frac{a_i}{n_i} \overline{m}_i k_i = \frac{a_i}{n_i}\nu + \frac{k_i}{n_i},
\end{equation}
where in the equality we applied \( \overline{m}_i = n_i n_{i-1} \overline{m}_{i-1} + q_i \), from \fref{eq:reduced-generators-semigroup}, and that \( n_i b_i - a_i q_i = 1 \). Since \( k_i \geq 0 \), we obtain the lower bound for the minimum of \( A_{j+1}^{(i)}(\underline{k}) \) restricted to \( \rho_{j+1}^{(i)}(\underline{k}) = \nu \). The argument for the lower bound for the minimum of \( C_{j+1}^{(i)}(\underline{k}) \) works similarly. Instead, solve for \( k_i \) in the constrain \( \rho_{j+1}^{(i)}(k_0, \dots, k_i, \dots, k_j) = \nu \) and substitute in \( C_{j+1}^{(i)}(\underline{k}) \). Applying \fref{eq:reduced-generators-semigroup} when necessary and that \( q_i c_i - n_i d_i = 1 \), we obtain
\begin{equation} \label{eq:eqn2}
\frac{c_i n_{i-1} \overline{m}_{i-1} + d_i}{\overline{m}_{i}} \nu + \frac{1}{\overline{m}_i} \sum_{l=0}^{i-1} n_{l+1} \cdots n_i \overline{m}_l k_l,
\end{equation}
which gives again the lower bound since \( k_l \geq 0 \). Having obtained the lower bounds for the minimums of \( A_{j+1}^{(i)} \) and \( C_{j+1}^{(i)} \) we can use \fref{lemma:lemma-ac1} to obtain the upper bounds for the maximums. Indeed,
\[ A_{j+1}^{(i)}(\underline{k}) + C_{j+1}^{(i)}(\underline{k}) \leq \rho_{j+1}^{(i)}(\underline{k}) + n_{i+1} \cdots n_{j}, \]
since \( k_l \geq 0 \). Hence,
\[ \max_{\rho_{j+1}^{(i)}(\underline{k}) = \nu} A^{(i)}_{j+1}(\underline{k}) \leq \nu + n_{i+1} \cdots n_j - \hspace{-4pt} \min_{\rho_{j+1}^{(i)}(\underline{k}) = \nu} C^{(i)}_{j+1}(\underline{k}) = \frac{a_i n_{i-1} \overline{m}_{i-1} + b_i}{\overline{m} _i} \nu + n_{i+1} \cdots n_j, \]
since \( a_i + c_i = n_i \) and \( b_i + d_i = q_i \). We can argue similarly for the remaining bound.
\end{proof}

All these technical lemmas are used in the proof of the following proposition.

\begin{proposition} \label{prop:lower-bound}
With the notations above, we have that
\[ \frac{a_i}{n_i}\nu \leq \ord_{y_i} \frac{\partial^{2\nu} \Phi_{1, \boldsymbol{t}}}{\partial x_i^\nu \partial \bar{x}^\nu_i}(0, y_i; \sigma_{i, \nu}), \quad \frac{d_i}{q_i}\nu \leq \ord_{z_i} \frac{\partial^{2\nu} \Phi_{2, \boldsymbol{t}}}{\partial w_i^\nu \partial \bar{w}_i^\nu}(z_i, 0; \sigma_{i, \nu}),\]
\[
\deg_{y_i} \frac{\partial^{2\nu} \Phi_{1, \boldsymbol{t}}}{\partial x_i^\nu \partial \bar{x}^\nu_i}(0, y_i; \sigma_{i, \nu}) \leq \frac{b_i}{q_i} \nu + e_i, \quad \deg_{z_i} \frac{\partial^{2\nu} \Phi_{2, \boldsymbol{t}}}{\partial w_i^\nu \partial \bar{w}_i^\nu}(z_i, 0; \sigma_{i, \nu}) \leq \frac{c_i}{n_i} \nu + e_i,
\]
for all \( \nu \in \mathbb{Z}_{\geq 0}\), and the same bounds hold for the conjugated variables \(\bar{y}_i, \bar{z}_i\).
\end{proposition}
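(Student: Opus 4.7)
The plan is to expand $\Phi_{1, \boldsymbol{t}} = |u_1|^{2s}\,|v_1|^2\,|\tilde f_{\boldsymbol{t}}|^{2s}\,(\pi^{(i)})^{*}\varphi$ as a product of four factors and bound the $y_i$-order and $y_i$-degree of the $\nu$-th $x_i$-derivative of each factor separately. The bounds for the antiholomorphic variable $\bar y_i$ follow by conjugation symmetry, and the mixed derivative factors accordingly into a holomorphic and an antiholomorphic piece. The Leibniz-type estimate \fref{lemma:lower-bound-1} then assembles these four bounds into the required bound on the whole product, since the hypotheses are preserved under products and under taking $s$-th powers.

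For the test function factor I would write $\pi^{(i)} = \pi^{(i-1)} \circ \pi_i$ and set $\psi := (\pi^{(i-1)})^{*}\varphi$, a smooth function in the analytic coordinates on $X^{(i-1)}$, so that $(\pi^{(i)})^{*}\varphi = \psi \circ \pi_i$ for $\pi_i$ as in \fref{eq:equations-toric-resolution}. Since $n_i b_i - q_i a_i = 1 \geq 0$, \fref{lemma:lower-bound-2} applies directly in the $U_i$ chart and delivers $\tfrac{a_i}{n_i}\nu \leq \ord_{y_i}$ and $\deg_{y_i} \leq \tfrac{b_i}{q_i}\nu$ for this factor; in the $V_i$ chart I would transpose the two components of $\pi_i$ (using $q_i c_i - n_i d_i = 1$) to apply the same lemma and obtain $\tfrac{d_i}{q_i}\nu \leq \ord_{z_i}$ and $\deg_{z_i} \leq \tfrac{c_i}{n_i}\nu$. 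The units $u_1, v_1$ along $E_{p_i}$ are nowhere vanishing on the divisor, so applying \fref{lemma:faa-di-bruno} to $u_1^s$ and $v_1$ produces linear combinations of $u_1(0,y_i)^{s-k}$ times Bell polynomials in the $\partial^{k}_{x_i} u_1(0,y_i)$; these contribute order zero and, at leading rate in $\nu$, no growth to the degree.

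For the strict transform factor $|\tilde f_{\boldsymbol{t}}|^{2s}$ I would use the recursive description of $\tilde f_{\boldsymbol{t}} = \tilde f_{g+1}$ from \fref{prop:equations-curve}. The restriction $\tilde f_{\boldsymbol{t}}(0,y_i)$ is a polynomial in $y_i$ of degree $\widetilde C \cdot E_{p_i} = e_i$, which is precisely the source of the additive $+\,e_i$ in the upper bound. Applying \fref{lemma:faa-di-bruno}, the $\nu$-th $x_i$-derivative of $|\tilde f_{\boldsymbol{t}}|^{2s}$ at $x_i = 0$ is a polynomial combination of the derivatives $\partial^{k}_{x_i} \tilde f_{\boldsymbol{t}}(0,y_i)$ for $k \leq \nu$. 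Each such derivative is extracted from those monomials in \fref{eq:prop-equations-curve-2} and \fref{eq:prop-equations-curve-3} that satisfy $\rho_{j+1}^{(i)}(\underline k) = k$, and then by \fref{lemma:lower-bound-3} the corresponding exponent $A_{j+1}^{(i)}(\underline k)$ lies in
\[
\Bigl[\,\tfrac{a_i}{n_i}\,k\,,\ \tfrac{a_i n_{i-1}\overline m_{i-1} + b_i}{\overline m_i}\,k + n_{i+1}\cdots n_j\,\Bigr].
\]
Using $n_i b_i - q_i a_i = 1$ and $\overline m_i = n_i n_{i-1}\overline m_{i-1} + q_i$ from \fref{sec:semigroup}, one checks that $\tfrac{a_i n_{i-1}\overline m_{i-1} + b_i}{\overline m_i} < \tfrac{b_i}{q_i}$, and since $n_{i+1}\cdots n_j \leq n_{i+1}\cdots n_g = e_i$ this interval is contained in $[\,\tfrac{a_i}{n_i}k,\ \tfrac{b_i}{q_i}k + e_i\,]$.

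Combining the four estimates via \fref{lemma:lower-bound-1} yields the stated bounds on $\Phi_{1, \boldsymbol{t}}$, and the analogous reasoning for $\Phi_{2, \boldsymbol{t}}$ on the $V_i$ chart uses the linear forms $C_{j+1}^{(i)}$ of \fref{lemma:lower-bound-3} together with the identities $a_i + c_i = n_i$ and $b_i + d_i = q_i$. I expect the main obstacle to be the combinatorial bookkeeping in the Faà di Bruno expansion of $|\tilde f_{\boldsymbol{t}}|^{2s}$ across the recursively defined maximal contact elements $\tilde f_{i+1},\dots,\tilde f_g$: one must verify that the additive constant $e_i$ in the degree bound does not accumulate into a $\nu$-dependent correction when the Leibniz estimates are iterated through all the maximal contact factors appearing in $\tilde f_{g+1}$.
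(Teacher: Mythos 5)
Your route is the paper's: split $\Phi_{1,\boldsymbol{t}}=|u_1|^{2s}|v_1|^2|\tilde f_{\boldsymbol{t}}|^{2s}(\pi^{(i)})^*\varphi$ into factors, bound each one, and assemble with \fref{lemma:lower-bound-1}, using \fref{lemma:lower-bound-2} for the pullback factors and \fref{lemma:lower-bound-3} plus the inequality $(a_in_{i-1}\overline{m}_{i-1}+b_i)/\overline{m}_i<b_i/q_i$ for the strict transform; the identification of the additive constant with $e_i=\widetilde{C}\cdot E_{p_i}$ is also as in the paper. But your treatment of $u_1,v_1$ has a genuine gap: that they are nowhere vanishing along $E_{p_i}$ and so ``contribute order zero'' is not what the Leibniz/Fa\`a di Bruno assembly needs. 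For the lower bound on the order of a product, \emph{every} factor must satisfy $\ord_{y_i}\partial^{k}_{x_i}(\,\cdot\,)(0,y_i)\ge (a_i/n_i)k$; if the $x_i$-derivatives of $u_1$ restricted to $x_i=0$ only had order $0$ in $y_i$, the Leibniz terms in which all derivatives fall on $u_1$ would destroy the bound $\ord\ge(a_i/n_i)\nu$. The correct justification, which is exactly what the paper uses, is that $u_1,v_1$ are, like $(\pi^{(i)})^*\varphi$, pullbacks under the toric map $\pi_i$ of functions on $X^{(i-1)}$ invertible at the relevant point, so \fref{lemma:lower-bound-2} applies to them verbatim and gives both $\ord\ge(a_i/n_i)\nu$ and $\deg\le(b_i/q_i)\nu$ (your ``no growth of the degree'' is neither needed nor justified).

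The second gap is the one you flag yourself and do not close, and it is where the content of the proposition lies. Reading the $y_i$-exponents off ``the monomials with $\rho^{(i)}_{j+1}(\underline{k})=k$'' is not literally available, because \fref{eq:prop-equations-curve-3} is recursive: each term is a monomial times a unit times powers of the previous strict transforms $\tilde f_{i+1},\dots,\tilde f_{j}$, so the derivatives of $\tilde f_{\boldsymbol{t}}=\tilde f_{g+1}$ cannot be extracted monomial by monomial. The paper closes this by induction on $j$: one shows that $\tilde f_{j+1}$ satisfies $\ord_{y_i}\partial^{\nu}_{x_i}\tilde f_{j+1}(0,y_i)\ge(a_i/n_i)\nu$ and $\deg_{y_i}\partial^{\nu}_{x_i}\tilde f_{j+1}(0,y_i)\le(b_i/q_i)\nu+n_{i+1}\cdots n_j$, an additive constant that is uniform in $\nu$ and equals $e_i$ for $j=g$. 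The reason the constant does not accumulate is precisely the structure of the linear forms: by \fref{lemma:lemma-ac1}, $A^{(i)}_{j+1}(\underline{k})+\sum_{l>i}n_{i+1}\cdots n_{l-1}k_l\le (b_i/q_i)\rho^{(i)}_{j+1}(\underline{k})+n_{i+1}\cdots n_j$, and the weighted sum $\sum_{l>i}n_{i+1}\cdots n_{l-1}k_l$ is exactly what absorbs the constants $n_{i+1}\cdots n_{l-1}$ carried by the factors $\tilde f_l^{k_l}$ from the induction hypothesis, while $\tilde f_j^{n_j}$ carries $n_j\cdot n_{i+1}\cdots n_{j-1}=n_{i+1}\cdots n_j$; \fref{lemma:lower-bound-1} and \fref{lemma:lower-bound-2} then handle products, $s$-th powers and units. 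As written, your argument establishes the bounds only for $\tilde f_{i+1}$, so the proof is incomplete exactly at the inductive bookkeeping you postponed.
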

\begin{proof}
We will do the proof for \( \Phi_{1, \boldsymbol{t}} \) since the proof for \( \Phi_{2, \boldsymbol{t}} \) works similarly. Recall that \( \Phi_{1, \boldsymbol{t}} = |u_1|^{2s} |v_1|^2 |\tilde{f}_{\boldsymbol{t}}|^{2s} (\pi^{(i)})^* \varphi\restr{U_i} \). By \fref{lemma:lower-bound-1}, it is enough to prove the bounds for each holomorphic or antiholomorphic factor. The factors \( u_1, v_1, (\pi^{(i)})^*\varphi\restr{U_i} \) are the pull-back by the toric morphism \( \pi_i \) of some invertible elements in the unique point of the total transform of \( f_{\boldsymbol{t}} \) that is not a simple normal crossing on \( X^{(i-1)} \). Therefore, by \fref{sec:toric-resolution} and \fref{lemma:lower-bound-2}, their orders (resp. degrees) with respect to \( y_i \) are bounded by \( a_i\nu/n_i \) (resp. \( b_i \nu/q_i \)). It remains to prove the bounds for the strict transform \( \tilde{f}_{\boldsymbol{t}} \).

\vskip 2mm

For the strict transform, consider \fref{prop:equations-curve} and proceed by induction from \( \tilde{f}_{i+1} \). Analyzing \fref{eq:prop-equations-curve-2}, the only part depending on \( x_i \) is the summation. By \fref{lemma:lower-bound-1}, it is enough to show that each factor of each summand fulfills the bounds. By the same argument as before, the units \( u_{\underline{k}}^{(i)} \) satisfy the bounds. On the other hand, \fref{lemma:lower-bound-3} assures the bounds for the monomials in \( x_i, y_i \). The lower-bound for the order is clear. For the upper-bound on the degree just notice that
\[ \frac{a_i n_{i-1} \overline{m}_{i-1} + b_i}{\overline{m}_i} = \frac{a_i n_{i-1} \overline{m}_{i-1} + b_i}{n_i n_{i-1} \overline{m}_{i-1} + q_i} < \frac{b_i}{q_i}, \]
since \( \overline{m}_i = n_i n_{i-1} \overline{m}_{i-1} + q_i \) and \( n_i b_i - q_i a_i = 1 \). Therefore, we are done for \( \tilde{f}_{i+1} \). By induction, if all \( \tilde{f}_{k+1}, i \leq k < j \) satisfy the bounds, so does \( \tilde{f}_{j+1} \). To see this, it is just a matter of applying \fref{lemma:lower-bound-1}, \fref{lemma:lower-bound-2}, \fref{lemma:lower-bound-3}, and the induction hypothesis to \fref{eq:prop-equations-curve-3}.
\end{proof}

We are ready to present the main result of this section.

\begin{theorem} \label{thm:plane-branch-candidates}
For any plane branch singularity \( f : (\mathbb{C}^2, \boldsymbol{0}) \longrightarrow (\mathbb{C}, 0) \) the poles of the complex zeta function \( f^s \) are simple and contained in the sets
\begin{equation} \label{eq:main-1}
\left\{ \sigma_{i, \nu} = - \frac{m_i + n_1 \cdots n_i + \nu}{n_i \overline{\beta}_i} \ \bigg|\ \nu \in \mathbb{Z}_{\geq 0},\   \overline{\beta}_i \sigma_{i, \nu}, e_{i-1} \sigma_{i, \nu} \not\in \mathbb{Z} \right\}, \quad i = 1, \dots, g,
\end{equation}
contributed by the rupture divisors \( E_{p_i} \), together with the negative integers \(\mathbb{Z}_{<0} \), contributed by the strict transform \( \widetilde{C} \).
\end{theorem}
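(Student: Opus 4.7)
The plan is to first eliminate contributions from non-rupture divisors via \fref{thm:residue-non-rupture}, and then to compute the residue at each rupture candidate $\sigma_{i,\nu}$ by reducing the relevant integral, through \fref{prop:residue-integral}, to a $\Gamma$-function quotient whose vanishing is governed by the stated integrality conditions. For any plane branch the strict transform $\widetilde{C}$ crosses only the last rupture divisor, so every non-rupture exceptional divisor has $D_3 = 0$, and the first case of \fref{thm:residue-non-rupture} forces all of its candidate residues to vanish. Combined with the candidates $\mathbb{Z}_{<0}$ contributed by $\widetilde{C}$, the only remaining candidates are the $\sigma_{i,\nu}$ with $i = 1,\dots,g$, $\nu \in \mathbb{Z}_{\geq 0}$.

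Next, a short modular calculation based on \fref{cor:lichtin}, the identity $n_i b_i - q_i a_i = 1$ and the congruence $q_i \equiv m_i \pmod{n_i}$ (with the analogous relations in the $V_i$ chart) yields the equivalences
\[ \overline{\beta}_i\,\sigma_{i,\nu} \in \mathbb{Z} \iff \epsilon_{1,\nu} \in \mathbb{Z}, \qquad e_{i-1}\,\sigma_{i,\nu} \in \mathbb{Z} \iff \epsilon_{2,\nu} \in \mathbb{Z}. \]
The theorem thus reduces to showing that whenever $\epsilon_{1,\nu} \in \mathbb{Z}$ or $\epsilon_{2,\nu} \in \mathbb{Z}$ the residue $\Res_{s = \sigma_{i,\nu}}\langle f^s,\varphi\rangle$ vanishes, and that in the remaining regime the pole is simple.

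For the residue computation I would apply \fref{prop:residue} in the chart $U_i$. Expanding $\partial_{x_i}^\nu \partial_{\bar{x}_i}^\nu \Phi_{1,\boldsymbol{t}}|_{x_i = 0,\,s = \sigma_{i,\nu}}$ via Faà di Bruno (\fref{lemma:faa-di-bruno}) applied to the factors $|u_1|^{2s}$ and $|\widetilde{f}_{\boldsymbol{t}}|^{2s}$, together with the explicit shape of $\widetilde{f}_{\boldsymbol{t}}$ from \fref{prop:equations-curve}, rewrites the integrand as a finite sum of summands of the form
\[ |y_i|^{2\epsilon_{1,\nu}}\,y_i^{k'}\bar{y}_i^{k}\,(y_i - \lambda_i)^{e_i(\sigma_{i,\nu}-l')}(\bar{y}_i - \bar{\lambda}_i)^{e_i(\sigma_{i,\nu}-l)} \]
multiplied by factors that are smooth and nowhere zero on $E_{p_i}$, where the integer shifts $(k,k',l,l')$ are constrained by the order and degree bounds of \fref{prop:lower-bound}. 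Each summand is then evaluated by \fref{prop:residue-integral} to a constant multiple of
\[ \frac{\Gamma(\alpha+1)\Gamma(\beta+1)\Gamma(\gamma+1)}{\Gamma(-\alpha-n)\Gamma(-\beta-m)\Gamma(-\gamma-n-m)}, \]
with $\alpha \in \epsilon_{1,\nu}+\mathbb{Z}$, $\beta \in e_i\sigma_{i,\nu}+\mathbb{Z}$ and, by \fref{lemma:relation-epsilon}, $\gamma + 1 \in \epsilon_{2,\nu} + \mathbb{Z}$.

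The main obstacle is term-by-term vanishing of this sum under the hypothesis $\epsilon_{1,\nu} \in \mathbb{Z}$ (respectively $\epsilon_{2,\nu} \in \mathbb{Z}$). The bounds of \fref{prop:lower-bound} confine the shifts $(k, k')$ to a range where the identity $\epsilon_{1,\nu} + 1 = (1 - a_i\nu)/n_i$ from \fref{cor:lichtin} forces $\alpha + 1 \in \mathbb{Z}_{\geq 1}$ and simultaneously $-\alpha - n \in \mathbb{Z}_{\leq -1}$; the factor $\Gamma(\alpha + 1)/\Gamma(-\alpha - n)$ is therefore zero (finite numerator over a simple pole), and symmetrically for $\gamma + 1$ versus $-\gamma - n - m$ when $\epsilon_{2,\nu} \in \mathbb{Z}$. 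Summing the zero contributions gives the required vanishing. Finally, simplicity of the remaining poles follows from the two-variable regularization of \fref{sec:poles-order-two}: a genuine double pole at $\sigma_{i,\nu}$ would require $\sigma_{i,\nu}$ to appear as a simultaneous candidate from the two divisors meeting at one of the corner points $E_{p_i} \cap D_j$, which forces $\epsilon_{1,\nu}$ or $\epsilon_{2,\nu}$ to be a negative integer — precisely the cases excluded from the admissible set. An analogous analysis at the intersection with $\widetilde{C}$ handles the integer candidates contributed by the strict transform.
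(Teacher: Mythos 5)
Your overall frame (eliminate non-rupture divisors, translate the integrality conditions into $\epsilon_{1,\nu},\epsilon_{2,\nu}\in\mathbb{Z}$, and kill the residue term by term through the Gamma quotients of \fref{prop:residue-integral}) coincides with the paper's strategy, and your treatment of the case $\overline{\beta}_i\sigma_{i,\nu}\in\mathbb{Z}$, $e_{i-1}\sigma_{i,\nu}\notin\mathbb{Z}$ is exactly the paper's argument. The genuine gap is the word ``symmetrically'' in the other case. Suppose $e_{i-1}\sigma_{i,\nu}\in\mathbb{Z}$, $\overline{\beta}_i\sigma_{i,\nu}\notin\mathbb{Z}$. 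In the $U_i$ chart the relevant factor is $\Gamma(\gamma+1)/\Gamma(-\gamma-n-m)$ with $\gamma+1=\epsilon_{2,\nu}+\nu+1-k'+e_il'$, and the only upper bound you have on $k'$ is $\deg_{y_i}\leq \frac{b_i}{q_i}\nu+e_i$ from \fref{prop:lower-bound}; since by \fref{cor:lichtin} $\epsilon_{2,\nu}+1$ decreases like $-\frac{c_in_{i-1}\overline{m}_{i-1}+d_i}{\overline{m}_i}\nu$, the coefficient of $\nu$ in the resulting lower bound for $\gamma+1$ is $\frac{a_in_{i-1}\overline{m}_{i-1}+b_i}{\overline{m}_i}-\frac{b_i}{q_i}$, which is strictly negative for every $i\geq 2$ (because $n_ib_i-q_ia_i=1$). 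The same mismatch occurs if you instead pass to the $V_i$ chart and try to use the $\alpha$ factor with the order bound $\frac{d_i}{q_i}\nu$. So for $i\geq 2$ and $\nu$ large you cannot conclude $\gamma+1>0$, and term-by-term vanishing is simply not available; the paper flags precisely this failure and replaces it by a different idea that your proposal lacks: view $\Res_{s=\sigma_{i,\nu}}f^s_{\boldsymbol{t}}$ as a function of the intersection coordinate $\lambda_i$ of the strict transform with $E_{p_i}$; by \fref{prop:residue-integral} it is a Laurent series in $\lambda_i,\bar{\lambda}_i$, differentiating under the integral sign raises the order in $z_i,\bar z_i$ and kills the principal parts both in $\lambda_i$ and in $\lambda_i^{-1}$, so the residue is independent of $\lambda_i$, and letting $\lambda_i\to 0$ puts you in a non-rupture configuration where it vanishes by \fref{thm:residue-non-rupture}.

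A second omission is the case where both $\overline{\beta}_i\sigma_{i,\nu}$ and $e_{i-1}\sigma_{i,\nu}$ are integers: then $e_i\sigma_{i,\nu}\in\mathbb{Z}$, the middle factor $\Gamma(\beta+1)/\Gamma(-\beta-m)$ can contribute a (simple) pole, and one vanishing Gamma quotient no longer suffices; the paper differentiates with respect to $\lambda_i$ in the $V_i$ chart until both $\Gamma(\alpha+1)/\Gamma(-\alpha-n)$ and $\Gamma(\gamma+1)/\Gamma(-\gamma-n-m)$ vanish (this is where the degree bounds of \fref{prop:lower-bound} are really used) and invokes the simplicity of the poles of $\Gamma$. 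Two smaller points: the explicit equations you use come from \fref{prop:equations-curve} and hence only apply to the family $f_{\boldsymbol{t}}$, so to get the statement for an arbitrary branch you must first invoke \fref{prop:monomial-curve-plane}; and your claim that every non-rupture divisor of a branch has $D_3=0$ does not follow merely from $\widetilde{C}$ meeting only the last rupture divisor (a third exceptional component could a priori cross $E_p$ away from the chart origins), even though the conclusion is the one the paper uses.
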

\begin{proof}
In order to prove this result for any plane branch, it is enough to restrict the study to the family of \( f_{\boldsymbol{t}} \) from \fref{prop:monomial-curve-plane}. By the previous discussion, we only have to show that the candidates \( \sigma_{i, \nu} \) such that \( \overline{\beta}_i \sigma_{i, \nu}, e_{i-1} \sigma_{i, \nu} \in \mathbb{Z} \) have always residue zero. The first important observation is that \( \overline{\beta}_i \sigma_{i, \nu}, e_{i-1} \sigma_{i, \nu} \in \mathbb{Z} \), if and only if, \( \epsilon_{1, \nu}, \epsilon_{2, \nu} \in \mathbb{Z} \), respectively. To see this, consider the definitions \( \epsilon_{1, \nu} = N_1 \sigma_{1, \nu} + k_1, \epsilon_{2, \nu} = N_2 \sigma_{2, \nu} + k_2 \) from \fref{eq:definition-epsilons}.
Hence, \( \epsilon_{1, \nu}, \epsilon_{2, \nu} \in \mathbb{Z} \), if and only if, \( N_1 \sigma_{i, \nu}, N_2 \sigma_{i, \nu} \in \mathbb{Z} \), respectively. By \fref{prop:equations-curve}, \( N_1 = a_i \overline{\beta}_i \) and \( N_2 = (c_i n_{i-1} \overline{m}_{i-1} + d_i) e_{i-1} \), and the remark follows because
\[ \gcd(n_i \overline{\beta}_i, a_i \overline{\beta}_i) = \overline{\beta}_i \gcd(n_i, a_i) = \overline{\beta}_i, \]
since \( n_i b_i - q_i a_i = 1 \), and
\[ \gcd(n_i \overline{\beta}_i, (c_i n_{i-1} \overline{m}_{i-1} + d_i) e_{i-1}) = e_{i-1} \gcd(\overline{m}_i, c_i n_{i-1} \overline{m}_{i-1} + d_i) = e_{i-1}, \]
since \( \overline{m}_i = n_i n_{i-1} \overline{m}_{i-1} + q_i \) and \( q_i c_i - n_i d_i = 1 \). The argument to show that the residue is zero if \( \epsilon_{1, \nu} \in \mathbb{Z} \) or \( \epsilon_{2, \nu} \in \mathbb{Z} \) is fundamentally different for each case. Let us begin by the case where \( \overline{\beta}_i \sigma_{i, \nu} \in \mathbb{Z} \) and \( e_{i-1} \sigma_{i, \nu} \not\in \mathbb{Z} \).

\vskip 2mm

In order to study the residues of the candidates \( \sigma_{i, \nu} \) of the \( i \)--th rupture divisor \( E_{p_i} \) such that \( \overline{\beta}_i \sigma_{i, \nu} \in \mathbb{Z} \), we place ourselves in the chart \( U_i \) of \( X^{(i)} \) with local coordinates \( (x_i, y_i) \). The origin of this chart is the intersection point \( E_{p_1} \cap D_1 \). The only point of the total transform on \( X^{(i)} \) that is not a simple normal crossing is the intersection of the strict transform of \( f_{\boldsymbol{t}} \) with \( E_{p_i} \), with \( E_{p_i} \) being the only exceptional divisor at that point. Therefore, we can apply the formula for the residue in \fref{prop:residue} for the \( U_i \) chart.

\vskip 2mm

From the preceding discussion, the derivatives of \( \Phi_{1, \boldsymbol{t}} \) appearing in the residue formula are a finite sum of terms that look like \( y_p^{k'} \bar{y}_p^{k} (y_i - \lambda_i)^{e_i(\sigma_{i, \nu} - l')} (\bar{y}_i - \bar{\lambda}_i)^{e_i(\sigma_{i, \nu} - l)} \). Consequently, we can reduce the residue to a finite sum of the integrals from \fref{eq:residue-integral-1}, which, by \fref{prop:residue-integral}, are equal to
\begin{equation} \label{eq:main-theorem-1}
-2 \pi i \lambda^{-\alpha - n - 1} \bar{\lambda}^{-\alpha - 1} \frac{\Gamma(\alpha + 1) \Gamma(\beta + 1) \Gamma(\gamma + 1)}{\Gamma(- \alpha - n) \Gamma(- \beta - m) \Gamma(- \gamma - n - m)}, \quad n, m \in \mathbb{Z}.
\end{equation}
As noted earlier, for \( E_{p_i} \) on \( X^{(i)} \) we have that \( \epsilon_{1, \nu} + \epsilon_{2, \nu} + e_i \sigma_{i, \nu} + \nu + 2 = 0 \). If we are assuming that \( \epsilon_{1, \nu} \in \mathbb{Z} \) but \( \epsilon_{2, \nu} \not\in \mathbb{Z} \), i.e. \( e_{i-1} \sigma_{i, \nu} \not \in \mathbb{Z}  \), it must happen that \( e_i \sigma_{i, \nu} \not\in \mathbb{Z} \). This implies that \( \Gamma(\beta + 1), \Gamma(-\beta -n), \Gamma(\gamma + 1), \Gamma(-\gamma -n -m) \in \mathbb{C}^* \). However, the remaining factor, \( \Gamma(\alpha + 1) / \Gamma (-\alpha - n) \), is always zero, since \( \epsilon_{1, \nu} \in \mathbb{Z} \) implies that \( \alpha, \alpha' \in \mathbb{Z} \),
\[
\alpha + 1 = \epsilon_{1, \nu} + 1 + k \geq -\frac{a_i}{n_i} \nu + \frac{1}{n_i} + \frac{a_i}{n_i} \nu = \frac{1}{n_i} > 0, 
\]
and \( -\alpha - n = -\alpha' < 1 \), by \fref{cor:lichtin} and \fref{prop:lower-bound}. This proves that the residue at the candidate poles \( \sigma_{i, \nu} \) such that \( \overline{\beta}_i \sigma_{i, \nu} \in \mathbb{Z}, e_{i-1} \sigma_{i, \nu} \not\in \mathbb{Z} \) is zero.

\vskip 2mm

We move now to the case where \( e_{i-1} \sigma_{i, \nu} \in \mathbb{Z}, \overline{\beta}_i \sigma_{i, \nu} \not\in \mathbb{Z} \), i.e. \( \epsilon_{2, \nu} \in \mathbb{Z} \) and \( e_i \sigma_{i, \nu} \not\in \mathbb{Z} \). For this case observe that the previous argument, applied to the residue in the \( V_i \) chart, only works for the first rupture divisor. For \( i = 1 \), \( \epsilon_{2, \nu} + 1 = -{d_1}\nu/{q_1} + {1}/{q_1}, \) since \( n_{0}, m_0 = 0 \) and \( \overline{m}_1 = q_1 \). Otherwise, the formula for \( \epsilon_{2, \nu} \) from \fref{cor:lichtin} and the bound for the order of \( \Phi_{2, \boldsymbol{t}} \) in \fref{prop:lower-bound} do not match. Thus, from now on, we will assume \( i \geq 2 \).

\vskip 2mm

To study the residue at these poles, we consider \( \Res_{s = \sigma_{i, \nu}} f_{\boldsymbol{t}}^s \) as a function of \( \lambda_i \in \mathbb{C} \) and we place ourselves in the \( V_i \) chart. Since \( \lambda_i \) is the intersection coordinate of the strict transform with \( E_{p_i} \), if we let \( \lambda_i \rightarrow 0 \) or \( \lambda_i \rightarrow \infty \), we are in the situation of a non-rupture divisor and the residue is zero. By \fref{prop:residue-integral} and since \(\epsilon_2 \in \mathbb{Z} \), the residue is a Laurent series on \( \lambda_i, \bar{\lambda}_i \neq 0 \). Deriving under the integral sign in the formula for the residue from \fref{cor:corollary-residue} we increase the order in \( z_i, \bar{z}_i \) of \( \Phi_{2, \boldsymbol{t}} \) by one unit. Therefore, after deriving enough times we can assume that in \fref{eq:main-theorem-1}, \( \alpha + 1 > 0, - \alpha' \leq 0, \alpha, \alpha' \in \mathbb{Z} \). This implies that the principal part in \( \lambda_i, \bar{\lambda}_i \) of \( \Res_{s = \sigma_{i, \nu}} f^s_{\boldsymbol{t}}(\lambda_i) \) must be zero. However, the same argument is true if we consider the residue as a function of \( \lambda_i^{-1}, \overbar{\lambda}_i^{-1} \). Hence, the residue is independent of \(\lambda_i, \bar{\lambda}_i\). This implies that the residue is zero overall.

\vskip 2mm

It remains to show that the residue is zero in the case that \( \overline{\beta}_i \sigma_{i, \nu} \in \mathbb{Z} \) and \( e_{i-1} \sigma_{i, \nu} \in \mathbb{Z} \). Both conditions imply that, \( e_i \sigma_{i, \nu} \in \mathbb{Z} \). To see that the residue is zero in this situation, it is just a matter of combining the previous arguments and recalling that the Gamma function has only simple poles. After deriving with respect to \( \lambda_i \) in the \( V_i \) chart, we can get \fref{eq:main-theorem-1} with \( \alpha + 1 > 0 \) and \( - \alpha - n = -\alpha' \leq 0 \), i.e., the factor \( \Gamma(\alpha + 1) / \Gamma (- \alpha - n) \) is zero. Assume we have derived \( d' \) times with respect to \( \lambda_i \), then
\[ \alpha' = \epsilon_{2, \nu} + d' + k' \qquad \beta' = e_i \sigma_{i, \nu} - e_i l' - e_i d'. \]
Consequently, since \( e_i \geq 1 \),
\[
\begin{split}
\gamma + 1 & = -\alpha' - \beta' - 1 = - \epsilon_{2, \nu} + 1 - k' - e_i \sigma_{i, \nu} + e_i l' - 2 \geq \epsilon_{1, \nu} + \nu + 1 - k' + e_i \\
& \geq -\frac{a_i}{n_i} \nu + \frac{1}{n_i} + \nu - \frac{c_i}{n_i} \nu - e_i + e_i = \frac{1}{n_i} > 0,
\end{split}
\]
by \fref{cor:lichtin} and \fref{prop:lower-bound}. Similarly, \( - \gamma' - n - m < 1 \). Therefore, the factor \( \Gamma(\gamma + 1)/ \Gamma(-\gamma - n -m) \) is also zero. Since \( e_{i-1} \sigma_{i, \nu} \in \mathbb{Z} \), the piece, \( \Gamma(\beta + 1) / \Gamma(- \beta - m) \), has a pole. However, \fref{eq:main-theorem-1} is zero because the poles of \( \Gamma(z) \) are simple.

\vskip 2mm

Finally, we need to see that the negative integers, the candidates coming from the strict transform \( \widetilde{C} \), are poles. We can argue directly from the definition of \( f^s \) given in \fref{eq:definition-local-sing}. Take \( \boldsymbol{0} \neq p \in f^{-1}_{\boldsymbol{t}}(0) \cap U \) at which the equation \( f_{\boldsymbol{t}} \) can be taken as one of the holomorphic coordinates. Thus, we reduce the problem to the monomial case and, by \fref{prop:regularization}, the negative integers are simple poles. The poles contributed by rupture divisors are also simple because they must have \( \epsilon_{1, \nu}, \epsilon_{2, \nu}, e_i \sigma_{i,\nu} \not\in \mathbb{Z} \) for all \( \nu \in \mathbb{Z}_{\geq 0} \). These conditions imply that \fref{eq:main-theorem-1} cannot have a pole and hence, the residue does not have a pole. By \fref{sec:poles-order-two}, all the poles of \( f^s \) are simple.
\end{proof}

We point out that the candidate poles \( \sigma_{1, 0} > \sigma_{2, 0} > \cdots > \sigma_{g, 0} \) are always poles of \( f^s \) for any plane branch \( f \) as shown by Lichtin in \cite{lichtin85,lichtin89}.

\begin{example} \label{ex:example2}
There are examples where the candidate poles of \( f^s \) that are in the sets from \fref{eq:main-1} vary in a \( \mu \)--constant deformations of \( f \). For instance, consider \( f = y^4 - x^9 \) and the \( \mu \)--constant deformation \( f_t = y^4 - x^9 + tx^5y^2 \). For the unique rupture divisor, the sequence of candidate poles is \( \sigma_{1, \nu} = -(13 + \nu)/36, \nu \in \mathbb{Z}_{\geq 0} \). Taking \( \nu = 2 \), \( \sigma_{1, 2} = -5/12, \epsilon_{1, 2} = -9/4, \epsilon_{2, 2} = -4/3, \) and
\[ \Res_{s = \sigma_{1, 2}}f_t = -16 \pi^2 \sigma_{1, 2}^2 \frac{\Gamma(\epsilon_{1, 2} + 3) \Gamma(\sigma_{1, 2}) \Gamma(\epsilon_{2,2} + 2)}{\Gamma(-\epsilon_{1,2} - 2) \Gamma(-\sigma_{1,2} + 1) \Gamma(-\epsilon_{2, 2} - 1)} |t|^2 \delta_{\boldsymbol{0}}^{(0,0,0,0)}. \]
Therefore, \( \sigma_{1,2} = -5/12 \) is a pole, if and only if, \( t \neq 0 \).
\end{example}

\subsection{Generic poles}

Studying the residue in terms of the deformation parameters of \( f_{\boldsymbol{t}} \), we can get open conditions in which a certain candidate pole is indeed a pole, as seen in \fref{ex:example2}. Recalling \fref{eq:linear-combination-dirac}, the first observation is that, in terms of \( \boldsymbol{t} \),
\begin{equation} \label{eq:residue-deformation}
\Res_{s = \sigma_{i, \nu}} f_{\boldsymbol{t}}^s = \sum_{k',l',k,l = 0}^{\nu} p_{k',l'}(\boldsymbol{t}) p_{k,l}(\bar{\boldsymbol{t}})\, \delta^{(k',l',k,l)}_{\boldsymbol{0}},
\end{equation}
with \( p_{k', l'}(\boldsymbol{t}) = \overbar{p_{k, l}(\bar{\boldsymbol{t}})} \) if \( k' = k \) and \( l' = l \). The following theorem shows that, actually, any candidate is a pole in a certain Zariski open set in the deformation space of \( f_{\boldsymbol{t}} \).

\begin{theorem} \label{thm:generic}
For any \( M_1, \dots, M_g \in \mathbb{Z}_{\geq 0} \), generic plane branches \( f_{gen} \) in the equisingularity class corresponding to the semigroup \( \Gamma = \langle \overline{\beta}_0, \overline{\beta}_1, \dots, \overline{\beta}_g \rangle \) satisfy that
\begin{equation} \label{eq:theorem-generic-eq}
\left\{ \sigma_{i, \nu} = - \frac{m_i + n_1 \cdots n_i + \nu}{n_i \overline{\beta}_i} \ \bigg|\ 0 \leq \nu < M_i,\   \overline{\beta}_i \sigma_{i, \nu}, e_{i-1} \sigma_{i, \nu} \not\in \mathbb{Z} \right\}, \quad i = 1, \dots, g,
\end{equation}
are simple poles of the complex zeta function \( f^s_{gen} \).
\end{theorem}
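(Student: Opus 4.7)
\emph{Plan.} The residue at a candidate pole $\sigma_{i,\nu}$ in \fref{eq:theorem-generic-eq} can be computed for the family $\{f_{\boldsymbol{t}, \boldsymbol{\lambda}}\}_{\boldsymbol{\lambda}}$ by applying \fref{prop:residue} on the surface $X^{(i)}$, using the explicit local equations around $E_{p_i}$ provided by \fref{prop:equations-curve}. Combining this with Faà di Bruno's formula (\fref{lemma:faa-di-bruno}) and the closed-form integral of \fref{prop:residue-integral} expresses the residue as a finite linear combination of distributional derivatives $\delta^{(k',l',k,l)}_{\boldsymbol{0}}$ whose scalar coefficients are polynomial in $(\boldsymbol{t}, \bar{\boldsymbol{t}}, \boldsymbol{\lambda}, \bar{\boldsymbol{\lambda}})$ with explicit $\Gamma$-function multipliers, as recorded in \fref{eq:residue-deformation}. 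A polynomial in the parameters either vanishes identically or is non-zero on a Zariski dense open set; hence it suffices to exhibit, for each pair $(i,\nu)$ with $\nu < M_i$ listed in \fref{eq:theorem-generic-eq}, specific values of $(\boldsymbol{t}, \boldsymbol{\lambda})$ for which the residue does not vanish. The finite intersection of the resulting Zariski open subsets of the parameter space will then yield the sought-after generic locus, and simplicity of the poles follows already from \fref{thm:plane-branch-candidates}.

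\emph{Key step.} The heart of the proof is the construction, for each candidate $\sigma_{i,\nu}$, of a specific choice of deformation monomial that produces a non-vanishing residue. For $\nu = 0$ this is unconditional by Lichtin's result recalled at the end of \fref{thm:plane-branch-candidates}. For $\nu \geq 1$, the plan is to activate a single parameter $t^{(i)}_{\underline{k}}$ with $\overline{\beta}_0 k_0 + \cdots + \overline{\beta}_i k_i > n_i \overline{\beta}_i$, chosen so that the associated monomial $x_i^{\rho_{i+1}^{(i)}(\underline{k})} y_i^{A_{i+1}^{(i)}(\underline{k})}$ in \fref{eq:prop-equations-curve-2} contributes, via Faà di Bruno, to the $\nu$-th derivative of $\Phi_{1,\boldsymbol{t}}$ at $x_i = 0$. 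One then arranges $\underline{k}$ so that the resulting exponents $(\alpha,\alpha',\beta,\beta',\gamma)$ fed into \fref{prop:residue-integral} satisfy non-integrality conditions that keep every $\Gamma$-factor appearing in \fref{eq:residue-integral-2} inside $\mathbb{C}^*$, so that the residue receives a non-trivial contribution proportional to the monomial $t^{(i)}_{\underline{k}}\bar{t}^{(i)}_{\underline{k}}$ times a non-vanishing constant. Example \ref{ex:example2} illustrates exactly this mechanism for $f = y^4 - x^9$ and $\nu = 2$.

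\emph{Main obstacle and conclusion.} The main technical obstacle is the verification that such a $\underline{k}$ always exists in the admissible range of deformation monomials and that the selected term is not cancelled by the combinatorics of the Bell polynomials. The hypotheses $\overline{\beta}_i \sigma_{i,\nu} \notin \mathbb{Z}$ and $e_{i-1} \sigma_{i,\nu} \notin \mathbb{Z}$ are crucial here: through the translation into $\epsilon_{1,\nu},\epsilon_{2,\nu} \notin \mathbb{Z}$ used in the proof of \fref{thm:plane-branch-candidates}, they prevent precisely the cancellations between numerator and denominator $\Gamma$-factors that forced vanishing in that theorem. Using the bounds of \fref{lemma:lower-bound-3} together with the explicit expressions for the linear forms $\rho_{i+1}^{(i)}, A_{i+1}^{(i)}, C_{i+1}^{(i)}$ in \fref{prop:equations-curve}, one should be able to select $\underline{k}$ producing $\rho_{i+1}^{(i)}(\underline{k}) = \nu$ (or, in the dual chart $V_i$ when that chart gives cleaner non-vanishing, increasing the order in $w_i$ accordingly) such that the induced contribution to the coefficient of some $\delta^{(k',l',k,l)}_{\boldsymbol{0}}$ in \fref{eq:residue-deformation} is a non-trivial polynomial in the deformation parameters. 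Taking the intersection of the finitely many Zariski open subsets on which each of these polynomials is non-zero produces the desired generic open subset of the parameter space of the family $\{f_{\boldsymbol{t},\boldsymbol{\lambda}}\}_{\boldsymbol{\lambda}}$, completing the proof.
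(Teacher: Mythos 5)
Your strategy is the same as the paper's: work inside the family $f_{\boldsymbol{t}}$, write the residue at $\sigma_{i,\nu}$ as in \fref{eq:residue-deformation} with coefficients polynomial in the deformation parameters, dispose of $\nu=0$ via Lichtin, and for $\nu>0$ find one deformation parameter $t^{(i)}_{\underline{k}}$ whose monomial has $x_i$-order exactly $\nu$ along $E_{p_i}$, so that $p_{0,0}(\boldsymbol{t})=\zeta\, t^{(i)}_{\underline{k}}+\cdots$ with $\zeta\neq 0$ coming from \fref{eq:residue-integral-2}. But the step you yourself flag as the ``main obstacle'' --- the existence of an admissible $\underline{k}$ with $\rho_{i+1}^{(i)}(\underline{k})=\nu$ --- is exactly the point where a new idea is required, and the tools you propose do not supply it: \fref{lemma:lower-bound-3} only bounds $A^{(i)}_{i+1}$ and $C^{(i)}_{i+1}$ on the fiber $\rho_{i+1}^{(i)}(\underline{k})=\nu$, and says nothing about that fiber containing a lattice point $\underline{k}\in\mathbb{Z}_{\geq 0}^{i+1}$ in the allowed range. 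The paper's resolution is arithmetic: by \fref{eq:prop-equations-curve-4}, the equation $\rho_{i+1}^{(i)}(\underline{k})=\nu$ is precisely $n_1\cdots n_i k_0+n_2\cdots n_i\overline{m}_1k_1+\cdots+\overline{m}_ik_i=n_i\overline{m}_i+\nu$, i.e.\ a representation of $n_i\overline{m}_i+\nu$ in the semigroup $\Gamma_{i+1}$ of the maximal contact element (\fref{eq:semigroup-max-contact}); such a representation exists because $n_i\overline{m}_i+\nu$ exceeds the conductor of $\Gamma_{i+1}$, and multiplying the identity by $e_i$ gives $\overline{\beta}_0k_0+\cdots+\overline{\beta}_ik_i=n_i\overline{\beta}_i+e_i\nu>n_i\overline{\beta}_i$, so the monomial is automatically an admissible term of \fref{eq:plane-branch-def}. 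Once this is in place your worry about Bell-polynomial cancellations disappears: since the chosen monomial has $x_i$-order exactly $\nu$, the parameter $t^{(i)}_{\underline{k}}$ can only enter the $\nu$-th derivative linearly and through this single term, so $p_{0,0}$ is a non-trivial polynomial and there is no need to exhibit explicit parameter values; moreover the non-integrality of the $\Gamma$-arguments is not something you arrange by choosing $\underline{k}$, it is automatic from the standing hypotheses $\overline{\beta}_i\sigma_{i,\nu},\,e_{i-1}\sigma_{i,\nu}\notin\mathbb{Z}$ (which force $\epsilon_{1,\nu},\epsilon_{2,\nu},e_i\sigma_{i,\nu}\notin\mathbb{Z}$).

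A second, smaller gap: intersecting the open sets on which each individual coefficient is non-zero is not quite enough when two candidates resonate, i.e.\ $\sigma_{i,\nu}=\sigma_{i',\nu'}$ with $i\neq i'$, since the contributions of the two rupture divisors to the residue of $f_{\boldsymbol{t}}^s$ at that common value could cancel. The paper adds, in that case, one further non-empty Zariski open condition guaranteeing that the sum of the two residues does not vanish; your argument needs the same remark before you can conclude by intersecting finitely many open sets.
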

\begin{proof}
By \fref{thm:plane-branch-candidates}, we only have to check the candidates in the sets given in \fref{eq:main-1}. In the case that \( \nu = 0 \), the residue for the candidates \( \sigma_{1, 0} > \sigma_{2, 0} > \cdots > \sigma_{g, 0} \) does not depend on \( \boldsymbol{t} \) and consists only of one of the integrals from \fref{prop:residue-integral} which is non-zero by \fref{prop:lichtin}.

\vskip 2mm

Therefore, fix a candidate pole \( \sigma_{i, \nu} \) with \( 0 < \nu < M_i \). It is enough to show that one of the polynomials \( p_{k',l'}(\boldsymbol{t}) \) on the parameters \( \boldsymbol{t} \) in the expression from \fref{eq:residue-deformation} is not identically zero. Consider the residue formula from \fref{prop:residue} in, for instance, the \( U_i \) chart and recall that \( \Phi_{1, \boldsymbol{t}} = |u_1|^{2s} |v_1|^2 |\widetilde{f}_{\boldsymbol{t}}|^{2s} (\pi^{(i)})^* \varphi \restr{U_i} \). Now, considering \fref{eq:prop-equations-curve-2}, we claim that the strict transform has a deformation term \[ t_{\underline{k}}^{(i)} x_i^{\rho_{i+1}^{(i)}(\underline{k})} y_i^{A_{i+1}^{(i)}(\underline{k})} u^{(i)}_{\underline{k}}(x_i, y_i) \] for a certain \( \underline{k} \) such that \( \rho_{i+1}^{(i)}(\underline{k}) = \nu \). Indeed, this happens since \( \rho_{i+1}^{(i)}(\underline{k}) = \nu \)
is equivalent, by \fref{eq:prop-equations-curve-4}, to
\[ n_1 \cdots n_i k_0 + n_2 \cdots n_i \overline{m}_1 k_1 + \cdots + \overline{m}_i k_i = n_i \overline{m}_i + \nu. \]
But this is an identity in the semigroup \( \Gamma_{i+1} \) of the maximal contact element \( f_{i+1} \), see \fref{eq:semigroup-max-contact}, and such a \( \underline{k} \) always exist because \( n_i \overline{m}_i + \nu \) is bigger than the conductor of \( \Gamma_{i+1} \). The deformation parameter \( t^{(i)}_{\underline{k}} \) for such a \( \underline{k} \) can only appear when \( f_{\boldsymbol{t}} \) is derived \(\nu\) times. By Faà di Bruno's formula, this implies that the polynomial \( p_{0,0}(\boldsymbol{t}) \) in \fref{eq:residue-deformation} is equal to \( \zeta t_{\underline{k}}^{(i)} + \cdots, \) where the dots represents other terms on \( \boldsymbol{t} \) not containing \( t_{\underline{k}}^{(i)} \). The coefficient \( \zeta \in \mathbb{C} \) is different from zero since it has the form of \fref{eq:residue-integral-2} and we are assuming that \( \overline{\beta}_i \sigma_{i, \nu}, e_{i-1} \sigma_{i, \nu} \not\in \mathbb{Z} \). Hence, the condition \( p_{0,0}(\boldsymbol{t}) \neq 0  \) gives a non-empty Zariski open subset on the deformation space of \( f_{\boldsymbol{t}} \) in which \( \sigma_{i, \nu} \) is a pole.

\vskip 2mm

In the case that there is a resonance between two poles, i.e. \( \sigma_{i, \nu} = \sigma_{i', \nu'}, i \neq i' \), we can always add, if necessary, an extra condition, giving a Zariski open set, which ensures that the residues do not cancel out. The intersection of all the open sets defines generic plane branches \( f_{gen} \).
\end{proof}
Consider the sets from \fref{eq:theorem-generic-eq} with \( M_i = n_i \overline{\beta}_i \), namely
\[
\Pi_i := \left\{ \sigma_{i, \nu} = - \frac{m_i + n_1 \cdots n_i + \nu}{n_i \overline{\beta}_i} \ \bigg|\ 0 \leq \nu < n_i \overline{\beta}_i,\   \overline{\beta}_i \sigma_{i, \nu}, e_{i-1} \sigma_{i, \nu} \not\in \mathbb{Z} \right\}, \]
for \( i = 1, \dots, g \) and define \( \Pi := \bigcup_{i=1}^g \Pi_{i} \). An easy computation shows that there are exactly \( \mu \) elements in \( \Pi \), counted with possible multiplicities,
\[ |\Pi| = \sum_{i=1}^g n_i \overline{\beta}_{i} - \overline{\beta}_i - n_i e_i + e_i = \sum_{i=1}^{g} (n_i - 1) \overline{\beta}_{i} + \sum_{i=1}^g e_i - e_{i-1} = \sum_{i=1}^g (n_i - 1) \overline{\beta}_{i} - n + 1 = \mu, \]
using \fref{eq:conductor2} in the last equality. The sets \( \Pi_i \) are precisely the \( b \)-exponents in Yano's conjecture from \fref{sec:yano-conjecture}. Indeed, the relation between the notations in \fref{eq:yano-definitions} and the resolution data in \fref{sec:toric-resolution} is clear. Namely, \( R_i = N_{p_i} = n_i \overline{\beta}_i, R'_i = N_{q_i} = \overline{\beta}_i, r_i = k_{p_i} + 1 = m_i + n_1 \cdots n_i \) and \( r'_i = k_{q_i} + 1 = \lceil (m_i + n_1 \cdots n_i)/n_i \rceil \). To see the equality between the exponents in \fref{eq:yano-generating-series} and the set \( \Pi \) is enough to notice that \( R_i = N_{p_i} = n_i N_{q_i} = n_i R'_i \) and \( r_i = k_{p_i} + 1 = n_i (k_{q_i} + 1) = n_i r'_i \).

\vskip 2mm

The results of Malgrange \cite{malgrange75} and Barlet \cite{barlet84} imply that the elements of \( \Pi \) generate all the eigenvalues of the monodromy. The characteristic polynomial of the monodromy is a topological invariant of the singularity, see A'Campo's formula \cite{acampo75}. Consequently, it can be computed from the semigroup \( \Gamma \) of \( f \), see \cite{neumann83}. The hypothesis that the eigenvalues of the monodromy are pairwise different is a condition on the equisingularity class, i.e. on the semigroup \( \Gamma \), implying that there are exactly \( \mu \) different elements in \( \Pi \).

\vskip 2mm

As a corollary of \fref{thm:generic}, we can deduce Yano's conjecture for any number of characteristic exponents if we assume that the eigenvalues of the monodromy are different. Yano's conjecture is proved by Cassou-Noguès for plane branches with one characteristic exponent in \cite{cassou-nogues88}. For two characteristic exponents with different monodromy eigenvalues, Yano's conjecture is proved by Artal Bartolo, Cassou-Noguès, Luengo and Melle Hernández in \cite{ABCNLMH16}.

\begin{corollary} \label{cor:conjecture-yano}
Let \( \Gamma = \langle \overline{\beta}_0, \overline{\beta}_1, \dots, \overline{\beta}_g \rangle \) be a semigroup defining an equisingularity class of plane branches. If the eigenvalues of the monodromy in the equisingularity class associated to the semigroup \( \Gamma \) are pairwise different, then Yano's conjecture holds.
\end{corollary}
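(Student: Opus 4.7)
The strategy is to combine \fref{thm:generic} with the Malgrange--Barlet theorems that connect $\tilde{b}_f$ and the $b$-exponents to the monodromy of $f$. By the identification recorded just after \fref{thm:generic}, the set $\Pi := \bigcup_{i=1}^g \Pi_i$ has cardinality $\mu$ counted with multiplicities, and the formal sum $\sum_{\sigma \in \Pi} t^{-\sigma}$ coincides with Yano's generating function $R\bigl((n,\beta_1,\dots,\beta_g),t\bigr)$. Since each exponential $e^{2\pi i \sigma}$ for $\sigma \in \Pi$ is a monodromy eigenvalue and the $\mu$ eigenvalues, counted with multiplicities, are determined by A'Campo's formula, the hypothesis that these eigenvalues are pairwise distinct promotes $\Pi$ to a set of $\mu$ distinct rational numbers in which no two elements differ by a nonzero integer.

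Applying \fref{thm:generic} with $M_i = n_i \overline{\beta}_i$ shows that every $\sigma \in \Pi$ is a simple pole of the complex zeta function $f_{gen}^s$ of a generic plane branch $f_{gen}$ in the equisingularity class of $\Gamma$. By Loeser's theorem for reduced plane curves, each such pole satisfies $\sigma = \tilde{\alpha} - k_\sigma$ with $\tilde{\alpha}$ a root of $\tilde{b}_{f_{gen}}$ and $k_\sigma \in \mathbb{Z}_{\geq 0}$. Under the distinct-eigenvalue hypothesis, Malgrange's theorem forces $\tilde{b}_{f_{gen}}$, being the minimal polynomial of $-\partial_t t$ on $\widetilde{H}''/t\widetilde{H}''$, to have exactly $\mu$ distinct roots with pairwise distinct exponentials, one per eigenvalue. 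The assignment $\sigma \mapsto \sigma + k_\sigma$ is therefore an injection from $\Pi$ onto the root set of $\tilde{b}_{f_{gen}}$, so the multiset of $b$-exponents equals $\{-\sigma - k_\sigma : \sigma \in \Pi\}$.

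The main obstacle is to show that the shifts $k_\sigma$ all vanish, which will yield $\{\alpha_j\} = \{-\sigma : \sigma \in \Pi\}$ and prove Yano's conjecture. The plan is to invoke Saito's bound \fref{thm:saito}: if $k_\sigma \geq 1$ then the root $\tilde{\alpha} = \sigma + k_\sigma \geq \sigma + 1$ must still satisfy $\tilde{\alpha} \leq -\mathrm{lct}(f_{gen})$, forcing $\sigma \leq -1 - \mathrm{lct}(f_{gen})$. It therefore suffices to verify that every $\sigma \in \Pi$ satisfies the strict inequality $\sigma > -1 - \mathrm{lct}(f_{gen})$, a direct bound on the candidates $\sigma_{i, \nu} = -(r_i + \nu)/R_i$ with $\nu < R_i$ obtained by combining the explicit formulas of \fref{eq:yano-definitions} with the semigroup inequality $n_i \overline{\beta}_i < \overline{\beta}_{i+1}$ of \fref{prop:characterization}. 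This is the delicate numerical step, since for rupture divisors other than the one attaining $\mathrm{lct}(f_{gen})$ the required estimate is not automatic, and one must exploit the distinct-eigenvalue hypothesis to rule out the offending configurations in which $\sigma_{i,\nu}$ and $\sigma_{i,\nu} + 1$ could simultaneously be candidate poles. Once all $k_\sigma$ vanish, the equality $\sum_{j=1}^\mu t^{\alpha_j} = \sum_{\sigma \in \Pi} t^{-\sigma} = R\bigl((n, \beta_1, \dots, \beta_g), t\bigr)$ establishes Yano's conjecture.
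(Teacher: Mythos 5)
Your overall scaffolding (generic poles via \fref{thm:generic}, Loeser's equality linking poles of \( f_{gen}^s \) to roots of \( b_{f_{gen}}(s) \), Malgrange plus the count \( |\Pi| = \mu \) and the distinct-eigenvalue hypothesis to identify the \( b \)-exponents) is the same as the paper's, but the decisive step is exactly the one you leave open: showing that the integer shifts \( k_\sigma \) all vanish. Your proposed route -- verify the ``direct bound'' \( \sigma > -1 - \textrm{lct}(f_{gen}) \) for every \( \sigma \in \Pi \) so that Saito's bound forbids any shift -- cannot work, because that inequality is false in general. For instance, for the characteristic sequence \( (4,6,99) \) one has \( \Gamma = \langle 4, 6, 105 \rangle \), \( \textrm{lct} = 5/12 \), while the second rupture divisor has \( N_{p_2} = 210 \), \( k_{p_2}+1 = 103 \), so \( \Pi_2 \) contains \( \sigma_{2,208} = -311/210 < -17/12 = -1 - \textrm{lct} \). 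Hence for deep candidates of the later rupture divisors the shift cannot be excluded by any numerical estimate of this kind, and your hedge that one should ``exploit the distinct-eigenvalue hypothesis to rule out the offending configurations'' is precisely where the proof has to happen; as written it is an unexecuted plan, and it is phrased in terms of candidate poles rather than roots of \( b_{f_{gen}}(s) \), which is the object Saito's bound and \fref{cor:shift-roots} actually constrain.

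The missing argument (which is how the paper closes the gap) runs as follows: for \( \sigma = \sigma_{i,\nu} \in \Pi_i \) with \( \sigma \leq -1-\textrm{lct} \), suppose \( \sigma + 1 \) were a root of \( b_{f_{gen}}(s) \). By the *equality* of Loeser's theorem (not just the containment you quote), every root of the Bernstein--Sato polynomial of a reduced plane curve is itself a pole of \( f_{gen}^s \); by \fref{thm:plane-branch-candidates} this pole is not an integer, so \( \sigma + 1 = \sigma_{i',\nu'} \) for some \( i' \), and \( i' \neq i \) since a difference of \( 1 \) within the \( i \)-th family would force \( \nu' = \nu - n_i\overline{\beta}_i < 0 \). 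But then \( \exp(2\pi i \sigma_{i,\nu}) = \exp(2\pi i \sigma_{i',\nu'}) \) exhibits a monodromy eigenvalue arising from two different rupture divisors, i.e.\ a repeated eigenvalue, contradicting the hypothesis. Shifts by \( k \geq 2 \) land above \( -\textrm{lct} \) and are excluded by \fref{thm:saito}, so \fref{cor:shift-roots} gives that \( \sigma \) itself is a root of \( b_{f_{gen}}(s) \); only then does the counting argument you describe identify the \( b \)-exponents with \( \{-\sigma : \sigma \in \Pi\} \) and yield Yano's conjecture. Without this step (or an equivalent one) your proposal does not prove the corollary.
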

\begin{proof}
We must check that all the \( \mu \) different elements of \( \Pi \) are roots of the Bernstein-Sato polynomial \( b_{{gen}, \boldsymbol{0}}(s) \) of \( f_{gen} \). If \( \sigma_{i, \nu} \in \Pi_i \) is in between \( - \textrm{lct}(f) - 1 \) and \( - \textrm{lct}(f) \), then \( \sigma_{i,\nu} \) is automatically a roots of \( b_{{gen}, \boldsymbol{0}}(s) \) by \fref{cor:shift-roots}. Otherwise, by \fref{cor:shift-roots}, we must check that \( \sigma_{i, \nu} + 1 \) is not a root of \( b_{{gen}, \boldsymbol{0}}(s) \). By contradiction, assume \( \sigma_{i, \nu} + 1 \) is a root of \( b_{{gen}, \boldsymbol{0}}(s) \). By \cite[Th. 1.9]{loeser85}, the roots of the Bernstein-Sato polynomial of a reduced plane curve can only be of the form \( \sigma - k \), with \( k \in \mathbb{Z}_{\geq 0} \) and \( \sigma \) a pole of \( f^s \). Hence, by \fref{thm:plane-branch-candidates}, \(\sigma_{i, \nu} + 1 = \sigma_{i', \nu'} \in \Pi_{i'}, i \neq i' \). But this is impossible since, as eigenvalues of the monodromy, they are both equal, contradicting the hypothesis. Finally, by the definitions in \fref{sec:yano-conjecture}, if the Bernstein-Sato polynomial has exactly \( \mu \) different roots they must coincide with the opposites in sign to the \( b \)-exponents.
\end{proof}

\bibliography{short,main}

\providecommand{\bysame}{\leavevmode\hbox to3em{\hrulefill}\thinspace}
\providecommand{\MR}{\relax\ifhmode\unskip\space\fi MR }
\providecommand{\MRhref}[2]{%
  \href{http://www.ams.org/mathscinet-getitem?mr=#1}{#2}
}
\providecommand{\href}[2]{#2}
\begin{thebibliography}{10}

\bibitem{acampo75}
N.~A'Campo, \emph{La fonction zêta d'une monodromie}, Comment. Math. Helv.
  \textbf{50} (1975), 233--248.

\bibitem{AGV88}
V.~I. Arnold, A.~Varchenko, and S.~M. Gusein-Zade, \emph{Singularities of
  differentiable maps}, vol.~2, Monogr. Math., no.~83, Birkhäuser, Boston,
  1988.

\bibitem{atiyah}
M.~F. Atiyah, \emph{Resolution of singularities and division of distributions},
  Comm. Pure Appl. Math. \textbf{23} (1970), no.~2, 145--150.

\bibitem{barlet84}
D.~Barlet, \emph{Contribution effective de la monodromie aux développements
  asymptotiques}, Ann. Sci. Éc. Norm. Supér. (4) \textbf{17} (1984), no.~2,
  293--315.

\bibitem{barlet86-2}
\bysame, \emph{Le calcul de la forme hermitienne canonique pour {$X^a + Y^b +
  Z^c = 0$}}, Séminaire d'analyse {P}.~{L}elong -- {P}.~{D}olbeault --
  {H}.~{S}koda, années 1983/1984 (P.~Lelong, P.~Dolbeault, and H.~Skoda,
  eds.), Lecture Notes in Math., vol. 1198, Springer, Berlin, 1986, pp.~35--46.

\bibitem{barlet86}
\bysame, \emph{Monodromie et pôles du prolongement méromorphe de {$\int_X
  |f|^{2\lambda}\square $}}, Bull. Soc. Math. France \textbf{114} (1986),
  247--269.

\bibitem{ABCNLMH05}
E.~Artal Bartolo, Pi. Cassou-Noguès, I.~Luengo, and A.~Melle Hernández,
  \emph{Quasi-ordinary power series and their zeta functions}, Mem. Amer. Math.
  Soc. \textbf{178} (2005), no.~841.

\bibitem{ABCNLMH16}
\bysame, \emph{Yano's conjecture for two-{P}uiseux-pair irreducible plane curve
  singularities}, Publ. Res. Inst. Math. Sci. \textbf{53} (2017), no.~1,
  211--239.

\bibitem{bernstein71}
J.~Bernstein, \emph{Modules over a ring of differential operators. {S}tudy of
  the fundamental solutions of equations with constant coefficients}, Funct.
  Anal. Appl. \textbf{5} (1971), no.~2, 1--16.

\bibitem{bernstein72}
\bysame, \emph{Analytic continuation of distributions with respect to a
  parameter}, Funct. Anal. Appl. \textbf{6} (1972), no.~4, 26--40.

\bibitem{bernstein-gelfand}
J.~Bernstein and S.~I. Gel'fand, \emph{Meromorphic continuation of the function
  $ {P}^\lambda $}, Funct. Anal. Appl. \textbf{3} (1969), no.~1, 84--85.

\bibitem{bjork74}
J.-E. Björk, \emph{Dimensions of modules over algebras of differential
  operators}, Fonctions analytiques de plusieurs variables et analyse complexe
  (Colloq. Internat. CNRS, No. 208, Paris, 1972), Agora Mathematica, vol.~1,
  Gauthier-Villars, Paris, 1974, pp.~6--11.

\bibitem{bjork-book}
\bysame, \emph{Rings of differential operators}, North-Holland Math. Library,
  no.~21, North-Holland Pub. Co., Amsterdam, 1979.

\bibitem{bories-veys16}
B.~Bories and W.~Veys, \emph{{I}gusa’s {$p$}-adic local zeta function and the
  monodromy conjecture for non-degenerate surface singularities}, Mem. Amer.
  Math. Soc. \textbf{242} (2016), no.~1145.

\bibitem{BGMM89}
J.~Briançon, M.~Granger, P.~Maisonobe, and M.~Miniconi, \emph{Algorithme de
  calcul du polynôme de {B}ernstein : Cas non dégénéré}, Ann. Inst.
  Fourier (Grenoble) \textbf{39} (1989), no.~3, 553--610.

\bibitem{briancon-maisonobe96}
J.~Briançon and P.~Maisonobe, \emph{Caractérisation géométrique de
  l’existence du polynôme de {B}ernstein relatif}, Algebraic Geometry and
  Singularities (A.~Campillo and L.~Narváez-Macarro, eds.), Progr. Math., vol.
  134, Birkhäuser, Basel, 1996, pp.~215--236.

\bibitem{BMT07}
J.~Briançon, P.~Maisonobe, and T.~Torrelli, \emph{Matrice magique associée à
  un germe de courbe plane et division par l'idéal jacobien}, Ann. Inst.
  Fourier (Grenoble) \textbf{57} (2007), no.~3, 919--953.

\bibitem{brieskorn70}
E.~Brieskorn, \emph{Die {M}onodromie der isolierten {S}ingularitäten von
  {H}yperflächen},  \textbf{2} (1970), 103--161.

\bibitem{budur03}
N.~Budur, \emph{On {H}odge spectrum and multiplier ideals}, Math. Ann.
  \textbf{327} (2003), no.~2, 257--270.

\bibitem{BMT11}
N.~Budur, M.~Musta\c{t}\v{a}, and Z.~Teitler, \emph{The monodromy conjecture
  for hyperplane arrangements}, Geom. Dedicata \textbf{153} (2011), no.~1,
  131--137.

\bibitem{budur-saito05}
N.~Budur and M.~Saito, \emph{Multiplier ideals, {$V$}-filtration, and
  spectrum},  \textbf{14} (2005), no.~2, 269--282.

\bibitem{casas}
E.~Casas-Alvero, \emph{Singularities of plane curves}, London Math. Soc.
  Lecture Note Ser., no. 276, Cambridge Univ. Press, Cambridge, 2000.

\bibitem{cassou-nogues86}
Pi. Cassou-Noguès, \emph{Racines de polynômes de {B}ernstein}, Ann. Inst.
  Fourier (Grenoble) \textbf{36} (1986), no.~4, 1--30.

\bibitem{cassou-nogues87}
\bysame, \emph{Étude du comportement du pôlynome de {B}ernstein lors d'une
  déformation à {$\mu$} constant de {$X^a + Y^b$} avec {$(a, b) = 1$}},
  Compos. Math. \textbf{63} (1987), no.~3, 291--313.

\bibitem{cassou-nogues88}
\bysame, \emph{Polynôme de {B}ernstein générique}, Abh. Math. Semin. Univ.
  Hambg. \textbf{58} (1988), no.~1, 103--124.

\bibitem{comtet}
L.~Comtet, \emph{Advanced combinatorics. {T}he art of finite and infinite
  expansions}, D. Reidel Pub. Co., Dordrecht, 1974.

\bibitem{dejong-pfister00}
T.~de~Jong and G.~Pfister, \emph{Local analytic geometry. {B}asic theory and
  applications.}, Adv. Lect. Math., Friedr. Vieweg \& Sohn, Braunschweig, 2000.

\bibitem{denef91}
J.~Denef, \emph{Report on {I}gusa’s local zeta function}, Séminaire N.
  Bourbaki \textbf{33} (1991), no.~741, 359--386.

\bibitem{denef-loeser92}
J.~Denef and F.~Loeser, \emph{Caractéristiques d'{E}uler-{P}oincaré,
  fonctions zêta locales et modifications analytiques}, J. Amer. Math. Soc.
  \textbf{5} (1992), no.~4, 705--720.

\bibitem{ELSV04}
L.~Ein, R.~Lazarsfeld, K.~E. Smith, and D.~Varolin, \emph{Jumping coefficients
  of multiplier ideals}, Duke Math. J. \textbf{123} (2004), no.~3, 469--506.

\bibitem{gelfand-ICM54}
I.~M. Gel'fand, \emph{Some aspects of functional analysis and algebra},
  Proceedings of the International Congress of Mathematicians 1954 (Amsterdam)
  (J.~C.~H. Gerretsen and J.~de~Groot, eds.), vol.~1, {Erven P. Noordhoff N.
  V., Groningen} and {North-Holland Pub. Co., Amsterdam}, 1957, pp.~253--276.

\bibitem{gelfand-shilov}
I.~M. Gel'fand and G.~E. Shilov, \emph{Generalized functions}, vol.~1, Academic
  Press, New York, 1964.

\bibitem{gradshteyn-ryzhik}
{I}.~{S}. Gradshteyn and {I}.~{M}. Ryzhik, \emph{Table of integrals, series,
  and products}, eighth ed., Elsevier/Academic Press, Amsterdam, 2015.

\bibitem{hartshorne}
R.~Hartshorne, \emph{Algebraic geometry}, Grad. Texts in Math, no.~52,
  Springer, Berlin, 1977.

\bibitem{hertling-schtalke99}
C.~Hertling and C.~Stahlke, \emph{{B}ernstein polynomial and {T}jurina number},
  Geom. Dedicata \textbf{75} (1999), no.~2, 137--176.

\bibitem{hironaka1}
H.~Hironaka, \emph{Resolution of singularities of an algebraic variety over a
  field of characteristic zero: {I}}, Ann. of Math. \textbf{79} (1964), no.~1,
  109--203.

\bibitem{hironaka2}
\bysame, \emph{Resolution of singularities of an algebraic variety over a field
  of characteristic zero: {II}}, Ann. of Math. \textbf{79} (1964), no.~2,
  205--326.

\bibitem{igusa74}
J.{-i}. Igusa, \emph{Complex powers and asymptotic expansions. {I}. {F}unctions
  of certain types}, J. Reine Angew. Math. \textbf{268-269} (1974), 110--130.

\bibitem{igusa75}
\bysame, \emph{Complex powers and asymptotic expansions. {II}}, J. Reine Angew.
  Math. \textbf{278-279} (1975), 307--321.

\bibitem{igusa88}
\bysame, \emph{{$B$}-functions and {$p$}-adic integrals}, Algebraic analysis,
  vol.~1, Academic Press, Boston, 1988, pp.~231--241.

\bibitem{igusa96}
\bysame, \emph{On local zeta functions}, Selected papers on number theory,
  algebraic geometry, and differential geometry (K.~Nomizu, ed.), Amer. Math.
  Soc. Transl. Ser. 2, vol. 160, Amer. Math. Soc., Providence, RI, 1996,
  pp.~1--20.

\bibitem{kashiwara76}
M.~Kashiwara, \emph{${B}$-functions and holonomic systems}, Invent. Math.
  \textbf{38} (1976), no.~1, 33--53.

\bibitem{kato1}
M.~Kato, \emph{The {$b$}-function of a {$\mu$}-constant deformation of
  {$x^7+y^5$}}, Bull. College Sci. Univ. Ryukyus \textbf{32} (1981), 5--10.

\bibitem{kato2}
\bysame, \emph{The {$b$}-function of a {$\mu$}-constant deformation of
  {$x^9+y^4$}}, Bull. College Sci. Univ. Ryukyus \textbf{33} (1982), 5--8.

\bibitem{kollar97}
J.~Kollár, \emph{Singularities of pairs}, Algebraic Geometry (Santa Cruz,
  1995) (J.~Kollár, R.~Lazarsfeld, and D.~R. Morrison, eds.), Proc. Sympos.
  Pure Math., vol. 62, part 2, Amer. Math. Soc., Providence, RI, 1997,
  pp.~221--287.

\bibitem{levandovskyy-jorge12}
V.~Levandovskyy and J.~Martín-Morales, \emph{Algorithms for checking rational
  roots of {$b$}-functions and their applications}, J. Algebra \textbf{352}
  (2012), no.~1, 408--429.

\bibitem{lichtin85}
B.~Lichtin, \emph{Some algebro-geometric formulae for poles of {$|f(x,
  y)|^s$}}, Amer. J. Math. \textbf{107} (1985), no.~1, 139--162.

\bibitem{lichtin89}
\bysame, \emph{Poles of {$|f(z,w)|^{2s}$} and roots of the {$B$}-function},
  Ark. Mat. \textbf{27} (1989), no.~1-2, 283--304.

\bibitem{loeser85}
F.~Loeser, \emph{Quelques conséquences locales de la théorie de {H}odge},
  Ann. Inst. Fourier (Grenoble) \textbf{35} (1985), no.~1, 75--92.

\bibitem{loeser88}
\bysame, \emph{Fonctions d'{I}gusa {$p$}-adiques et polynomes de {B}ernstein},
  Amer. J. Math. \textbf{110} (1988), no.~1, 1--21.

\bibitem{loeser90}
\bysame, \emph{Fonctions d'{I}gusa {$p$}-adiques, polynomes de {B}ernstein, et
  polyedres de {N}ewton}, J. Reine Angew. Math. \textbf{412} (1990), 75--96.

\bibitem{malgrange75}
B.~Malgrange, \emph{Le polynôme de {B}ernstein d'une singularité}, Lecture
  Notes in Math. \textbf{4} (1975), 98--119.

\bibitem{malgrange83}
\bysame, \emph{Polynôme de {B}ernstein-{S}ato et cohomologie évanescente},
  Analysis and topology on singular spaces, {II}, {III} (Luminy, 1981),
  Astérisque, vol. 101-102, Soc. Math. France, Paris, 1983, pp.~243--267.

\bibitem{narvaez91}
Z.~Mebkhout and L.~Narváez-Macarro, \emph{La théorie du polynôme de
  {B}ernstein-{S}ato pour les algèbres de {T}ate et de
  {D}work-{M}onsky-{W}ashnitzer}, Ann. Sci. Éc. Norm. Supér. (4) \textbf{24}
  (1991), no.~2, 227--256.

\bibitem{meuser16}
D.~Meuser, \emph{A survey of {I}gusa's local zeta function}, Amer. J. Math.
  \textbf{138} (2016), no.~1, 149--179.

\bibitem{neumann83}
W.~D. Neumann, \emph{Invariants of plane curve singularities}, Knots, braids
  and singularities (Plans-sur-Bex, 1982), Monogr. Enseign. Math., Enseignement
  Math., Geneva, 1983, pp.~223--232.

\bibitem{noro02}
M.~Noro, \emph{An efficient modular algorithm for computing the global
  {$b$}-function}, Mathematical software (Beijing, 2002) (M.~Cohen, ed.),
  Proceedings of the First International Congress of Mathematical Software,
  World Sci. Pub., 2002, pp.~147--157.

\bibitem{oaku97}
T.~Oaku, \emph{An algorithm of computing {$b$}-functions}, Duke Math. J.
  \textbf{87} (1997), no.~1, 115--132.

\bibitem{oka96}
M.~Oka, \emph{Geometry of plane curves via toroidal resolution}, Algebraic
  Geometry and Singularities (A.~Campillo and L.~Narváez-Macarro, eds.),
  Progr. Math., vol. 134, Birkhäuser, Basel, 1996, pp.~215--236.

\bibitem{saito89}
M.~Saito, \emph{On the structure of the {B}rieskon lattice}, Ann. Inst. Fourier
  (Grenoble) \textbf{39} (1989), no.~1, 27--72.

\bibitem{saito94}
\bysame, \emph{On microlocal {$b$}-function}, Bull. Soc. Math. France
  \textbf{122} (1994), 163--184.

\bibitem{saito16}
\bysame, \emph{{B}ernstein-{S}ato polynomials of hyperplane arrangements},
  Selecta Math. (N.S.) \textbf{22} (2016), no.~4, 2017--2057.

\bibitem{sato80}
M.~Sato, M.~Kashiwara, T.~Kimura, and T.~Oshima, \emph{Micro-local analysis of
  prehomogeneous vector spaces}, Invent. Math. \textbf{62} (1980), no.~1,
  117--179.

\bibitem{sato-shintani-90}
M.~Sato and T.~Shintani, \emph{Theory of prehomogeneous vector spaces
  (algebraic part) -- the {E}nglish translation of {S}ato’s lecture from
  {S}hintani’s note}, Nagoya Math. J. \textbf{120} (1990), 1--34.

\bibitem{smith-thompson07}
K.~E. Smith and H.~M. Thompson, \emph{Irrelevant exceptional divisors for
  curves on a smooth surface}, Algebra, geometry and their interactions
  (A.~Corso, J.~Migliore, and C.~Polini, eds.), Contemp. Math., vol. 448, Amer.
  Math. Soc., Providence, RI, 2007, pp.~245--254.

\bibitem{steenbrink77}
J.~H.~M. Steenbrink, \emph{Mixed {H}odge structure on the vanishing
  cohomology}, Real and complex singularities (Oslo, 1976), Proc. Ninth Nordic
  Summer School/NAVF Sympos. Math., Sijthoff and Noordhoff, Alphen aan den
  Rijn, 1977, pp.~525--563.

\bibitem{steenbrink89}
\bysame, \emph{The spectrum of hypersurface singularities}, Actes du Colloque
  de Théorie de Hodge (Luminy, 1987), Astérisque, vol. 179-180, Soc. Math.
  France, Paris, 1989, pp.~163--184.

\bibitem{teissier-appendix}
B.~Teissier, \emph{Appendix}, in  \cite{zariski-moduli}, 1986.

\bibitem{varchenko82}
A.~N. Varchenko, \emph{The complex exponent of a singularity does not change
  along strata {$\mu =$} const}, Funct. Anal. Appl. \textbf{16} (1982), no.~1,
  1--9.

\bibitem{ctc-wall}
C.~T.~C. Wall, \emph{Singular points of plane curves}, London Math. Soc. Stud.
  Texts, no.~63, Cambridge, 2004.

\bibitem{walther05}
U.~Walther, \emph{{B}ernstein-{S}ato polynomial versus cohomology of the milnor
  fiber for generic hyperplane arrangements}, Compos. Math. \textbf{141}
  (2005), no.~1, 121--145.

\bibitem{yano78}
T.~Yano, \emph{On the theory of {$b$}-functions}, Publ. Res. Inst. Math. Sci.
  \textbf{14} (1978), no.~1, 111--202.

\bibitem{yano82}
\bysame, \emph{Exponents of singularities of plane irreducible curves}, Sci.
  Rep. Saitama Univ. Ser. \textbf{10} (1982), no.~2, 21--28.

\bibitem{zariski-moduli}
O.~Zariski, \emph{Le problème des modules pour les branches planes}, Hermann,
  Paris, 1986.

\end{thebibliography}
\bibliographystyle{amsplain}

\end{document}